\parbox{\textwidth}{\colorbox{gray}{\parbox{\textwidth}{#1#2#3}}\vskip-4pt}}
\pgfplotsset{compat=1.16}
\newtheorem{theorem}{Theorem}[section]
\newtheorem{lemma}[theorem]{Lemma}
\newtheorem{corollary}{Corollary}[theorem]
\newtheorem{assumption}{Sampling Scheme}[section]
\newtheorem{remark}{Remark}[section]
\date{}
\begin{document}

\author{Xiaoran Jiang \\ 
  Department of Statistics \\ 
    George Mason University \\
    Fairfax, VA, 22030
 \and 
 Anand N. Vidyashankar \\
    Department of Statistics \\ 
     George Mason University \\ 
     Fairfax, VA, 22030 } 

\title{Ancestral Inference and Learning for Branching Process in Random Environments}
\maketitle

\begin{abstract}    
Ancestral inference for branching processes in random environments involves determining the ancestor distribution parameters using the population sizes of descendant generations. In this paper, we introduce a new methodology for ancestral inference utilizing the generalized method of moments. We demonstrate that the estimator's behavior is critically influenced by the coefficient of variation of the environment sequence. Furthermore, despite the process's evolution being heavily dependent on the offspring means of various generations, we show that the joint limiting distribution of the ancestor and offspring estimators of the mean, under appropriate centering and scaling, decouple and converge to independent Gaussian random variables when the ratio of the number of generations to the logarithm of the number of replicates converges to zero. Additionally, we provide estimators for the limiting variance and illustrate our findings through numerical experiments and data from Polymerase Chain Reaction experiments and COVID-19 data.
\end{abstract}

\section{Introduction}
\label{sec-intro}
Branching processes and their variants are used to model various biological, biochemical, and epidemic processes. 
For example, these methods have been used as a model for quantitative polymerase chain reaction (qPCR) experiments and for tracking the spread of COVID-19 cases in communities during the early stages of the pandemic (see \cite{Fran-Vidya2024}; \cite{Yanev2020}; \cite{Atanasov2021}). 
Estimating the mean number of ancestors based on the data from descendant generations is known as ancestral inference, and it arises in many applications. One important application is in polymerase chain reaction (PCR) experiments, a biochemical experiment used to amplify the number of copies of a specific DNA sequence, referred to as the target DNA. This experiment involves multiple rounds of thermal cycling, an experimental process of heating and cooling that creates the conditions necessary for DNA replication. In Section \ref{sec-da}, we discuss the details of this experiment.  Quantitation refers to estimating the initial number of DNA molecules in the context of PCR experiments and is a particular case of the ancestral inference problem. Existing methods for quantitation include linear or non-linear regression models (\cite{LIVAK2001402}; \cite{Goll2006}; \cite{Hsu-Sherina20}) for the fluorescence data. Alternatively, a supercritical Galton-Watson (GW) branching process is a mechanistic model used for analyzing qPCR data (see, for instance, \cite{L09A}; \cite{Follestad2010}; \cite{Bret-Vidya2011}).   

This paper describes new inferential techniques for estimating the mean number of ancestors generating branching processes in random environments (BPRE). Specifically, we consider a replicated BPRE model in which replicates are independent and identically distributed (i.i.d.) random variables initiated by a random number of i.i.d. ancestors.

\vspace{0.1in}

\noindent{\bf{Probability model:}} We begin with an introduction to our model. Let $\{ Z_{0,j}: 1 \le j \le J \}$ be a collection of i.i.d. non-negative discrete random variables with distribution $    P(Z_{0,1}=r)=\mu(r)$, $r=1,2,\cdots $ representing the initial number of ancestors defined on a probability space $(\Omega, \mathcal{F},P)$. Each ancestor initiates a BPRE as follows: let $\{\xi_{n,j,k}: n \ge 0, j \ge 1, k \ge 1\}$ denote a collection of random variables which are i.i.d. conditioned on a $\zeta_{n,j}$ with distribution $p(r|\zeta_{n,j})$. The random variables $\{\zeta_{n,j}: n \ge 0, j \ge 1\}$ are i.i.d. and assume values on $\mathfrak{B}$ which is equipped with the Borel $\sigma$-field $\mathcal{B}(\mathfrak{B})$ generated by the topology of weak convergence (see \cite{athreya2004branching}). Hence, with probability one (w.p.1.),
\begin{align*}
    \bm{P}(\xi_{n,j,k}=r|\zeta_{n,j})=p(r|\zeta_{n,j}).
\end{align*}
We refer to $\bm{p}( \cdot|\zeta_{n,j})=(  p(0|\zeta_{n,j}),  p(1|\zeta_{n,j}) ,   p(2|\zeta_{n,j})  , \cdots )   $ as the offspring distribution of the $n^{\text{th}}$ generation in the $j^{\text{th}}$ replicate conditioned on the environment. The replicated branching process in i.i.d. environments is defined iteratively as follows: for each $n\ge 0$, $j=1,2 \cdots J$,
\begin{align*}
    Z_{n+1,j}=\sum_{k=1}^{ Z_{n,j}} \xi_{n,j,k}.
\end{align*}
The random variable $\xi_{n,j,k} $ is interpreted as the number of children produced by the $k^{\text{th}}$ parent in the $n^{\text{th}} $ generation of $j^{\text{th}} $ replicate with the offspring distribution $ \bm{p}( \cdot|\zeta_{n,j})$. $Z_{n,j}$ denotes the total population size of children in the $n^{\text{th}} $ generation of $j^{\text{th}} $ replicate. In our analysis, $J$ will depend on $n$, and we suppress it in the notations when there is no scope for confusion.
Let $m_A=\bm{E}[Z_{0,1}]$, $\sigma^2_A=\bm{Var}[Z_{0,1}] $ and
\begin{align*}
    m_{n,j}=\sum_{r\ge 1} r p(r|\zeta_{n,j}), \quad \sigma^2_{n,j} = \sum_{j\ge 1} j^2 p(j|\zeta_{n,j}) -   m_{n,j}^2
\end{align*}
denote the offspring mean and variance conditioned on the $n^{\text{th}}$ generation offspring distribution. We refer to them as
conditional offspring mean and conditional offspring variance. Since $\{ \zeta_{n,j}: n \ge 0, j \ge 1\} $ forms an i.i.d. collection, it follows that $ \{ (m_{n,j},\sigma^2_{n,j} ) : n \ge 0, j \ge 1\} $ forms an i.i.d. collection of random vectors.  Let $m^*=\bm{E} [m_{0,1} ]$ and $\sigma^{2*}= \bm{Var} [m_{0,1}] $ denote the expectation and variance of $m_{0,1} $. Let $\gamma^{2*}=\bm{E}[\sigma_{0,1}^2]  $ and for any $d >1 $, let $ m_{d}^*=\bm{E} [m_{0,1}^d ]$. Notice that while $m_{n,j}$ is the conditional offspring mean, $m^* $ represents the marginal offspring mean. Similarly, $\gamma^{2*} $ represents the marginal offspring variance.
We assume throughout the paper that the following two conditions hold:

\begin{enumerate}[label=\textbf{(A\arabic*)}, ref=\textbf{(A\arabic*)}]
    \item $p(0|\zeta_{n,j})=0$, for all $ n\ge 0 $ and $j \ge 1$. \label{AssumA1}
    \item  $\bm{P}(p(1|\zeta_{n,j})  <1)=1$, for all $ n\ge 0 $ and $j \ge 1$. \label{AssumA2}
\end{enumerate}

\noindent
The condition \ref{AssumA1} ensures that the process is supercritical and  that $Z_{n,j} \to \infty $ w.p.1.(see, for instance, \cite{athreya2004branching}; \cite{Kersting2017DiscreteTB}; \cite{Tanny1977}). 
The condition \ref{AssumA2} ensures that the process is non-degenerate. As is well-known, when the offspring distribution is fixed across all generations, one obtains the GW process (see \cite{athreya2004branching}).

The next two assumptions are required for asymptotic analyses, discussed in later sections.

\begin{enumerate}[resume,label=\textbf{(A\arabic*)}, ref=\textbf{(A\arabic*)}]
\item $\gamma^{2*} < \infty $. \label{Assum-gamma}
\item   $m_{4+2\delta}^*< \infty$  and $\bm{E}[ Z_{0,1}^{4+2\delta} ]  < \infty    $ for some $\delta>0 $. \label{Assum-mo4+2delta}
\end{enumerate}
We note here that \ref{Assum-gamma} 
and  \ref{Assum-mo4+2delta} are not required for all our results and will be specified when necessary.

We begin with a brief description of estimating $m_A$. The simplest 
estimator, using $J$ replicates is $\bar{Z}_{0} \coloneqq J^{-1} ( Z_{0,1}+ Z_{0,2}+\cdots+ Z_{0,J})$. 
However, if  $Z_{0,j}$'s are unobservable and only the population size after a certain number of generations is available, then one way to estimate the parameters of the ancestor distribution would be by using the information contained in the descendant generations. In the GW case, the information about $m_A$ is contained in the martingale limit $W_j$ of $m^{*(-n)}Z_{n,j}$ and this information can be extracted by using the generalized method of moments. This idea was utilized in \cite{Bret-Vidya2011} to perform inference for the quantitation parameters. However, the GW model is not adequate to capture all sources of variability in PCR experiments as demonstrated in Section \ref{sec-da}. For this reason, we undertake a detailed analysis of modeling such data using BPRE and develop new technical tools to address the analytical challenges.

We now turn to another motivating example where the proposed model and the method arise naturally. When modeling the growth of disease spread such as COVID-19 and smallpox, most research concentrated on where and when the disease began to spread. On the other hand, our focus is on quantifying the mean number of people who initiated the disease spread. However, data at the beginning of the disease spread is unreliable for many reasons, such as lack of awareness and limited testing capabilities. Hence, it is critical to develop inferential methods that rely solely on data from the $\tau^{\text{th}} $ generation onward when the data of the number of patients becomes more reliable. 

We now turn to a more detailed description of the Polymerase chain reaction and dynamic of COVID-19 growth.

\subsection{Polymerase chain reaction}
\label{sec-intro-PCR}
We first discuss the ideas behind PCR experiments. As described above, the PCR experiment consists of multiple rounds of thermal cycling. Although theoretically, the number of molecules doubles in every cycle, in practice, the probability of a molecule being duplicated is less than one. The probability of a molecule duplicating ($p$) is known as the efficiency of the reaction, and the average number of DNA molecules produced by a single strand of DNA is referred to as the amplification rate $m=1+p$.

\begin{figure}[ht]
\centering
\includegraphics [height=8.0cm]{ 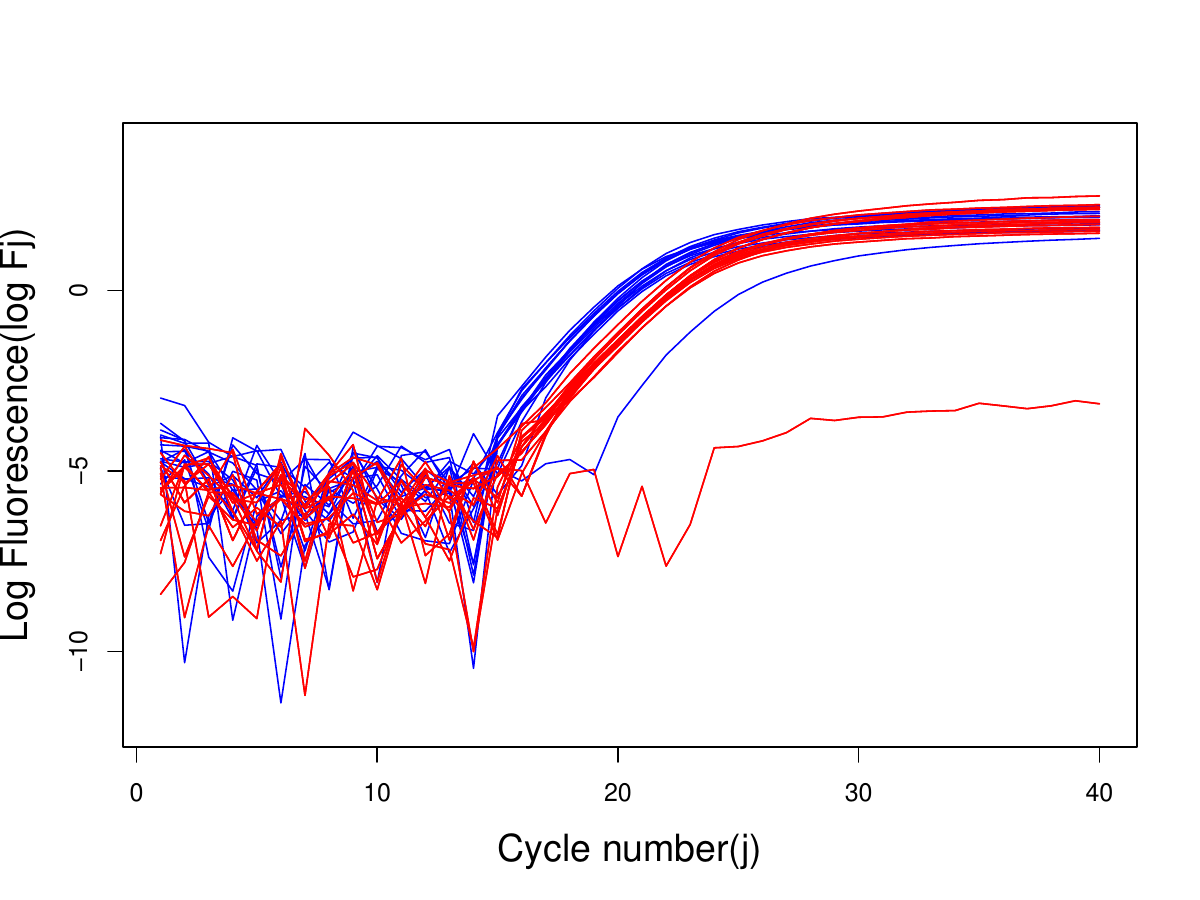 } 
\caption{Example of the PCR dataset}
\label{LHflo}
  \end{figure}

There are two types of quantitation: absolute quantitation and relative quantitation. Absolute quantitation involves the estimation of the copy number of a target DNA. In contrast, relative quantitation concerns the ratio of the copy numbers of a target DNA to that of a reference DNA, whose initial number and amplification process is ``well-understood”. The reference DNA is referred to as a calibrator.

The amount of DNA molecules is typically too small to be detected by standard instruments. A quantitative PCR (qPCR) experiment is a technique that allows for the measurement of DNA generation at every cycle using the fluorescent reporter. The cycles of a PCR can be classified into three phases: the initial phase, the exponential phase, and the plateau phase. In the initial phase, the amount of the target DNA is not substantial compared with the background, and hence, the fluorescence data is noisy. In the exponential phase, the amount of DNA increases, and the fluorescence measurement becomes reliable. As the reaction components become limiting, the efficiency of the reaction decreases, leading to a saturation phenomenon referred to as the plateau phase. Figure \ref{LHflo}, which plots fluorescence data from a qPCR experiment where the target material is luteinizing hormone (LH) on the log scale from 32 reactions, illustrates these three phases. 

A typical qPCR experiment generates data from multiple reactions, with each reaction occurring in a separate well. Replicating the same PCR experiment in multiple wells under identical conditions produces data that can be represented as  $\{F_{n,j}, n\ge 1, j \ge 1 \} $, where $F_{n,j} $ denotes the fluorescence intensity from the $n^\text{th}$ cycle of the $j^\text{th}$ replicate. The fluorescence intensity is proportional to the amount of DNA molecules. Thus,  as in PCR literature, we assume that $F_{n,j}=cZ_{n,j} $, where $c$ depends on the amplicon size and the calibration factor, which represents the number of nanograms of double-stranded DNA per fluorescence unit. More details concerning the experiment and scientific details can be found in \cite{Bret-Vidya2011} and the references therein.

Variability is introduced at several stages of the experiment. First, \textbf{between reaction variability}, such as pipetting error (\cite{Curry2002FactorsIR}), arises from the changes in the volume of PCR supplies that are pipetted from the master mix into the wells. In random effects models, the efficiency of $j^\text{th}$ reaction is treated as a random variable $p_j $ to account for between reaction variability. 
Second, \textbf{within reaction variability} occurs. For example, when the reaction components, such as dNTPs, become limiting, it leads to the plateau phase (\cite{Saha2007}), which indicates that the amplification rate varies with each cycle. Consequently, it is critical to consider both within and between reaction variability when modeling PCR experiments. The GW model with random effects does not capture \textbf{within reaction variability}.
\subsection{Dynamics of COVID-19 growth}

Another new aspect that is beginning to attract attention concerns the COVID-19 dataset. While most research had concentrated on where and when COVID-19 began to spread, our focus is on quantifying the mean number of people who initiated the disease spread, whom we refer to as ``ancestral subjects". For this analysis, we focus on the weekly confirmed COVID-19 cases at the beginning of 2020. The spread of the disease depends on many factors, such as population size and policies. Policies can vary significantly between different states but tend to remain consistent within a state over a period of time. Meanwhile, within a state like Virginia (see Figure \ref{VA-pop-fig}), the population sizes of different counties can differ greatly. Therefore, for the COVID-19 dataset, we can treat counties with similar population sizes as replicates and then compare the number of ancestral subjects between different groups of counties.

\begin{figure}[ht]
\centering
\includegraphics [width=\linewidth]{ 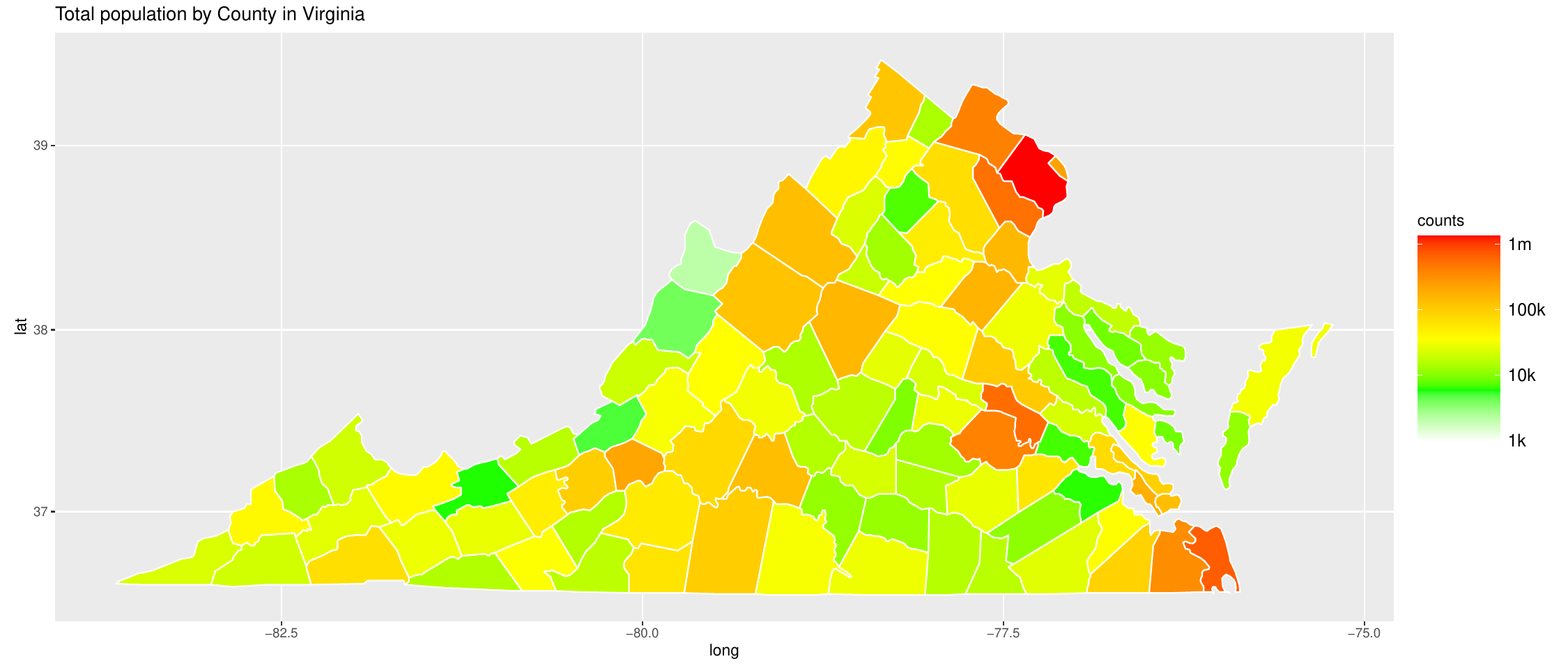 } 
\caption{VA Population}
\label{VA-pop-fig}
  \end{figure}

Consider, for example, the accumulated patients for counties in Virginia grouped by population size in Figure \ref{gpp_plot}. 
Initially, the disease did not receive sufficient attention, and accurate testing methods were not widely available (and perhaps promoted), which explains the low reported data for the first few weeks. As testing became more accessible and the seriousness of the disease was realized, the growth rate of reported cases increased significantly for a few days to weeks. However, many of these cases involved patients who were infected in previous weeks, rendering the data from this period an unreliable reflection of the growth of the disease. This pattern resembles the initial phase in PCR experiments, where observed data may be unreliable. Eventually, after resolving the historical cases from previous weeks, the growth rate stabilized, and the data became more reliable.

After some time, when the spread of COVID-19 was controlled, the growth rate began to decline. Thus, the progression of COVID-19 dynamics mirrors the three phases in a PCR dataset, though for different reasons. Consequently, we utilize data from the period when the growth rate remained stable before it began to decline to estimate the initial number of infected subjects. 

\begin{figure}[ht]
    \centering
    \includegraphics[width=0.75\linewidth]{ 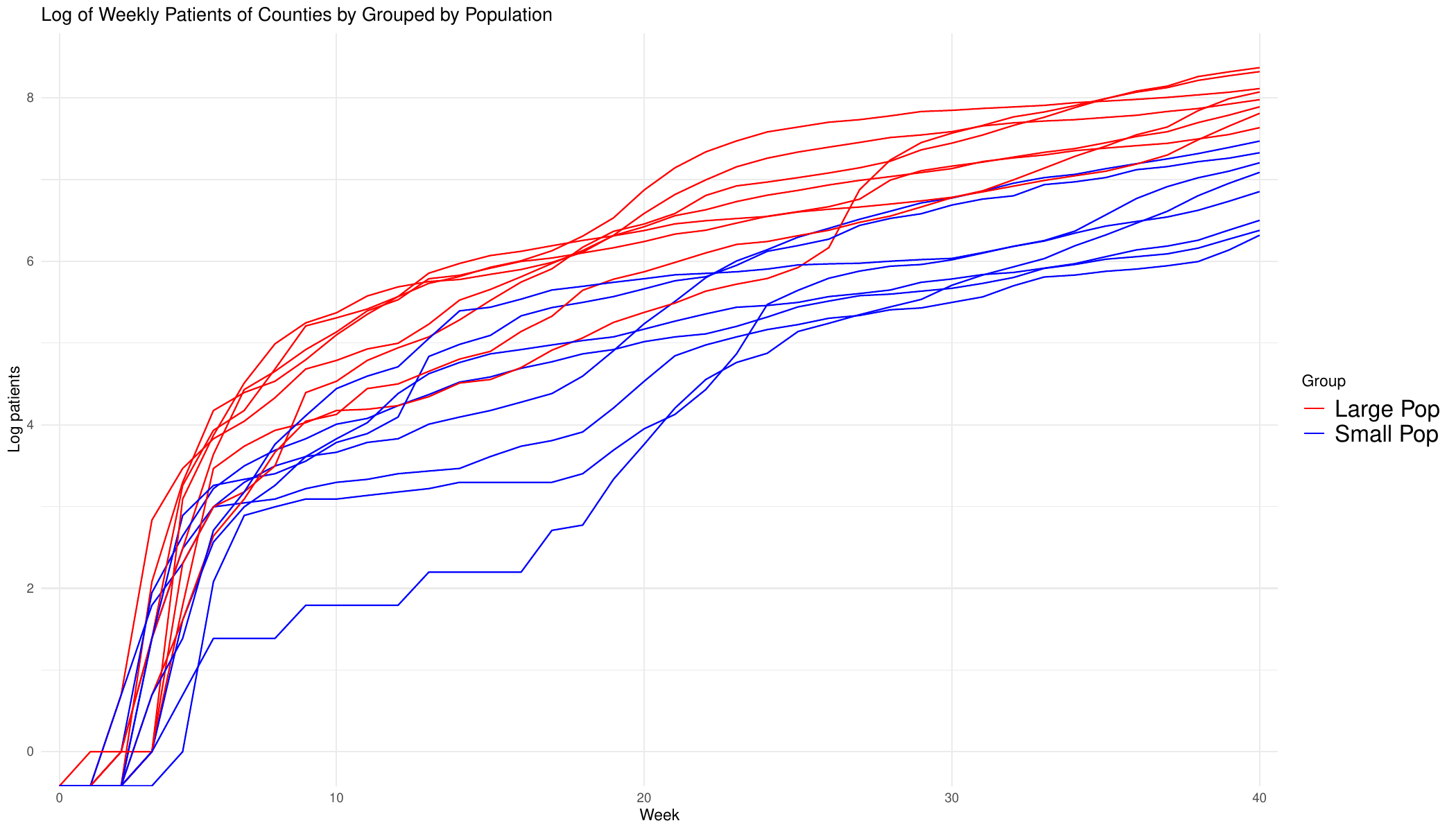}
    \caption{Weekly Accumulated Patients for Counties in VA Grouped by Population}
    \label{gpp_plot}
\end{figure}

Comparing the number of ancestral subjects between different groups of counties is similar to relative quantitation in PCR. For each group, we assume the earliest reported case for the entire state marks the starting point of the pandemic. The accumulated cases in $l^\text{th}$ week for $j^\text{th}$ county is modeled as $Z_{l,j} $. The transmission rate is represented by $m_{l,j} $, which depends on the unobservable environmental variables $\zeta_{l,j}$.

\subsection{Our contributions}

In this paper, we generalize from these two examples by considering an asymptotic framework where the number of generations $n$ diverges to infinity. Since different data structures are available, 
we begin with a description of the data structures studied in the paper. 
Let $\tau$ denote the starting generation number and $n$ denote the final generation number of the observable data.  We allow $\tau$ to depend on $n$, yielding a rich class of data structures.

\begin{assumption}
\label{assum-tau_n}
Let $\tau_n$ denote the initial generation number when the data are observable. Three distinct scenarios describe different sampling regimes: (i) $\tau_n \equiv \tau$,  (ii) the difference $n-\tau_n=\delta_{m}$ is constant, and (iii) $\tau_n \to \infty$ and $n-\tau_n \to \infty $. 
\end{assumption}
\noindent
Case (i) ensures the availability of all data starting from generation $\tau$ till generation $n$ of the BPRE.  By taking $\tau=1$, one obtains all the data from all the $n$ generations of the BPRE. Case (ii) allows observations to be confined to a fixed number of generations within a ``moving window" of length $\delta_{m} $. Case (iii) expands on the concept of the ``moving window" from case (ii), assuming the length of the window also diverges; for instance, $\lim_{n \to \infty}\frac{n-\tau_n}{n}=\frac{1}{2} $. Figure \ref{Assumc1} - Figure \ref{Assumc3} provide examples of these three cases.  In these figures, the green parts represent the observable data from $\tau_n $ to $n$ generations. We emphasize here that cases (i) and (iii) are typically observed in biological/biochemical experiments such as PCR, while case (ii) is typically observed in epidemiological studies and the spread of infectious diseases.

\begin{figure}[ht]
\centering
\begin{minipage}{.3\textwidth}
\centering
\includegraphics[height=4.5cm]{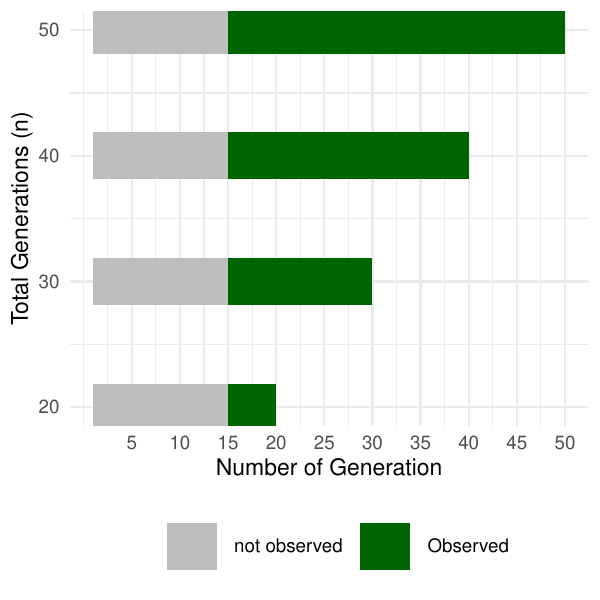} 
\caption{case (i)}
\label{Assumc1}
\end{minipage}\hfill
\begin{minipage}{.3\textwidth}
\centering
\includegraphics[height=4.5cm]{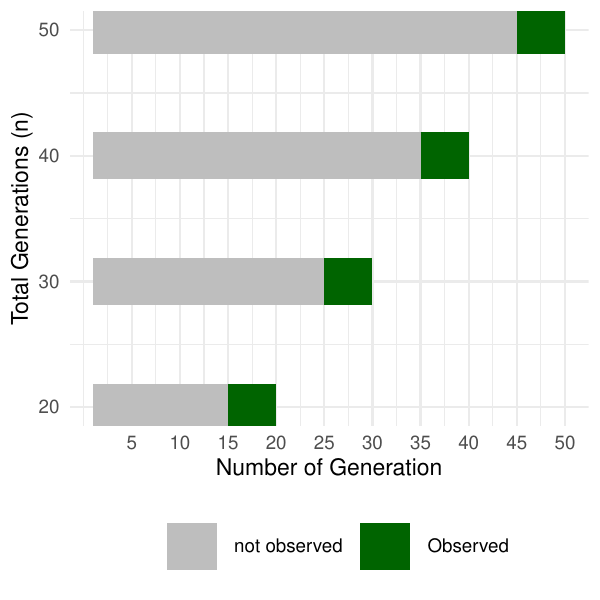} 
\caption{case (ii)}
\label{Assumc2}
\end{minipage}\hfill
\begin{minipage}{.3\textwidth}
\centering
\includegraphics[height=4.5cm]{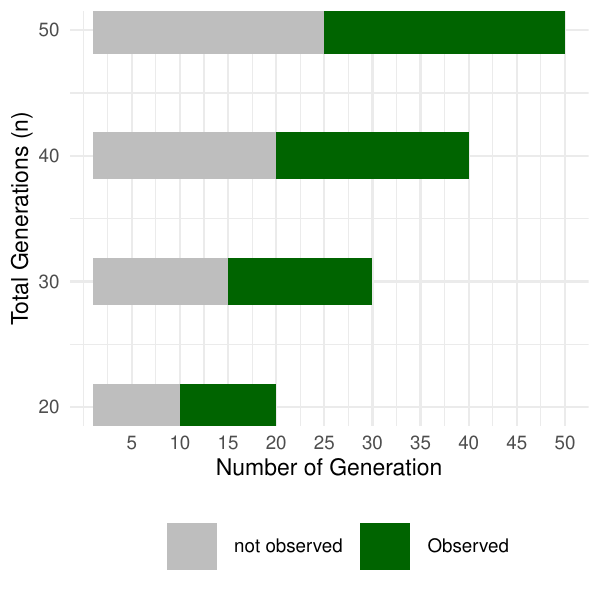} 
\caption{case (iii)}
\label{Assumc3}
\end{minipage}
\end{figure}
This paper is structured as follows: 
Section \ref{sec-mr} presents the main results of the paper concerning the estimator of the ancestral mean. We establish that in all three cases described in sampling scheme \ref{assum-tau_n}, the asymptotic normality holds under the scaling $(J_n\hat{r}^{-2n})^{\frac{1}{2}}$, where $\hat{r}^2$ (which is necessarily larger than one by Jensen's inequality) represents an estimate of the coefficient of variation (CV).
This result is in \emph{sharp contrast} to the GW case where the rate is $J_n^{\frac{1}{2}}$ (see \cite{hanlon2009contributions}). 

The estimate of the ancestral mean, $\hat{m}_{A,n}$, involves an estimate of the marginal offspring mean, $\hat{m}_n$. Theorem \ref{consis-CLT-mo} provides the limiting distribution for the centered  $\hat{m}_n$ holds using the scaling $(J_n(n-\tau_n))^{\frac{1}{2}}$ with the limiting variance coinciding with the anticipated marginal variance.  A natural next question concerns the joint distribution of the $\hat{m}_n$ and $\hat{m}_{A,n}$. As mentioned previously, since the estimate of $m_A$ utilizes an estimate of $m^*$, it is natural to use data from a select few generations to estimate $m^*$ and then estimate $m_A$ using the remaining generations. Now, interpreting in the language of statistical learning, the following question is pertinent: \emph{should one learn $m^*$ from initial or later generations}? We address this issue and establish a difference in the asymptotic variance in Section \ref{sec-slj}. Specifically, we show that learning $m^*$ from later generations yields an estimator of $m_A$ with a smaller asymptotic variance in all three cases considered in sampling scheme \ref{assum-tau_n}. Additionally, we also establish that, as in the GW case, the estimators are asymptotically independent. Applications of these results for quantification in RT-PCR experiments are described in Section \ref{sec-slj}.

The proofs of the main results rely on the rate of convergence of $(\hat{m}m^{*(-1)})^n$, where $m^*$ is the marginal offspring mean. This problem has some general interest. In the case of positive i.i.d. random variables, $X_1, X_2, \cdots X_n$, with expectation $m^*$, the behavior of $(\bar{X}_n/ m^*)^{a_n}$ relies on the finiteness of the moments of $X_1$. However, in the GW and the BPRE cases, the behavior is captured via the behavior of the harmonic moments. These results are studied in Section \ref{sec-other} of the manuscript, while the proofs of the main results are included in Section \ref{sec-pf}. Implementations of our estimators in data examples require an estimate of the limiting variance. We consider several parametric models and derive consistent estimators of the asymptotic variance in Section \ref{sec-var}. In Section \ref{sec-ne}, we provide results from several numerical experiments and a novel Bootstrap algorithm for constructing a confidence interval. Section \ref{sec-da} contains data analysis from two PCR experiments and data from a COVID-19 study.  Section \ref{sec-dis} contains some concluding remarks.

\section{Main Results}
\label{sec-mr}
In this section, we state our main results and their asymptotic properties concerning the ancestral mean $m_A $ and marginal offspring mean $m^*$.

\subsection{Ancestral mean  estimation}
We begin with the observation that, since $\bm{E}[{Z_{n,j}}]= m^{*n} m_A $, a natural estimator of $m_A$ is
\begin{align*}
    \hat{m}_{A}&=\frac{1}{m^{*n}} \frac{1}{J} \sum_{j=1}^{J}  Z_{n,j}  . 
\end{align*}
Next, using an estimator $\hat{m}$ of $m^*$, specified below, a method of moments estimator for $m_A$ is,
\begin{align*}
    \hat{m}_{A,n,J}=\frac{1}{\hat{m}^n} \frac{1}{J} \sum_{j=1}^{J}  Z_{n,j} = \left(\frac{m^*}{\hat{m}}\right)^n \frac{1}{J} \sum_{j=1}^{J}  \frac{Z_{n,j}}{m^{*n}} . 
\end{align*}
While the above estimator only uses data from the $n^{th}$ generation, one can incorporate all available data by replacing $Z_{n,j}$ by $\sum_{l=\tau}^n Z_{l,j}$ yielding
\begin{align}
   \hat{m}_{A, n} \coloneqq \hat{m}_{A,\tau_n,n,J_n}= \frac{1}{\hat{\mathcal{N}}_n} \left( \frac{1}{J_n} \sum_{j=1}^{J_n}   \sum_{l=\tau_n}^n Z_{l,j} \right) = \frac{\mathcal{N}_{n}^*}{\hat{\mathcal{N}}_n}   \left(  \frac{1}{J_n}   \sum_{j=1}^{J_n}  \frac{1}{\mathcal{N}_{n}^*} \sum_{l=\tau_n}^n Z_{l,j} \right) ,   \label{def-ma_taunj}
\end{align}
where $\mathcal{N}^{*}_{n}$ represents the scaling factor and is given by
\begin{align*}
\mathcal{N}_{n}^*=  \frac{m^{*\tau_n}(m^{*(n-\tau_n+1)} -1)}{(m^*-1) } . 
\end{align*}
Using $\hat{\mathcal{N}}_n$ to represent the estimate of $\mathcal{N}_n^*$, the ratio $\mathcal{N}_{n}^*\hat{\mathcal{N}}_{n}^{-1}$ represents the behavior of the ratio of the marginal mean (of the sum $\sum_{l=\tau}^{n} Z_{l,j}$) to its estimate. While several estimators of $m$ are possible, we use  
\begin{align}
\hat{m}_n \coloneqq \hat{m}_{\tau_n,n,J_n}=\frac{1}{J_n} \sum_{j=1}^{J_n}  \frac{1}{n-\tau_n} \sum_{l=\tau_n+1}^n\frac{Z_{l,j}}{Z_{l-1,j}},        
     \label{def-mo_taunj}
\end{align}
which has the intuitive interpretation involving the average number of children produced by the $(l-1)^{\text{th}}$ generation parents. Evidently, the asymptotic properties of (\ref{def-ma_taunj})  will depend on the behavior of $\mathcal{N}_n^{*(-1)} \sum_{l=\tau}^n Z_{l,j}$.  A key observation is that the variance of $\mathcal{N}_n^{*(-1)} \sum_{l=\tau}^n Z_{l,j}$ grows as $n$ increase leading to a comparison of the marginal mean behavior to that of the marginal second moment, namely the CV. This new aspect is peculiar to the BPRE and is not typically encountered in the GW case. Our first main result concerning the asymptotic properties of the estimator in (\ref{def-ma_taunj}), details this issue and investigates its behavior under all three cases described in sampling scheme \ref{assum-tau_n}. Before we state the Theorem, we need a few more notations. Let 
\begin{align}
\hat{m}_{2,n} =\frac{1}{J_n} \sum_{j=1}^{J_n} \frac{1}{n-\tau_n} \sum_{l=\tau_n+1}^n \left(\frac{Z_{l,j}}{Z_{l-1,j}} \right)^2,   \quad \text{and} ~~   \hat{r}_{n} = \sqrt{\frac{\hat{m}_{2,n}}{\hat{m}_{n}^2} }  \label{def-r_taunJ}
\end{align}
denote the estimates of the marginal second moment and the square root of the CV. Let $r^{*2}\coloneqq m_2^*/m^{*2}$ denote the coefficient of variation. Let
\begin{align*}
\mathfrak{D}=    \frac{ m_A\gamma^{2*}}{1-(r^{*2}m^*)^{-1}}  +     \frac{(m_A^2+\sigma_A^2) \sigma^{2*}}{1-r^{*(-2)}}, \;\; \sigma_{I}^2 \coloneqq \frac{\mathfrak{D}(m^*-1)^2(m_{2}^*+m^*)}{m^{2*}(m_{2}^*- m^*)(m_{2}^*-1) }, ~\text{and}
\end{align*}
\begin{align*}
\sigma_{F}^2(\delta) \coloneqq  \frac{\mathfrak{D}(m^*-1)m^{*(2\delta+2)} }{m_{2}^{*(\delta+1)}(m^{*(\delta +2)} -m^*)^2} \left( \frac{2m^* m_{2}^{*(\delta+1)} - 2m^{*(\delta+2)}}{ m_{2}^*-m^*  }  - \frac{(m^*+1)(m_{2}^{*(\delta+1)}-1 ) }{m_{2}^*-1} \right). 
\end{align*}
Also, recall that $\delta_m$ is the constant $(n-\tau_n)$ described in case (ii) in the sampling scheme. 
\begin{theorem}
\label{consis-CLT4mataunJ}
Let \ref{AssumA1}-\ref{Assum-gamma} hold. Additionally, assume that $\frac{n}{ \log J_n} \to 0 $ as $ n \to \infty$, and $\tau_n  $ satisfies one of the cases (i)-(iii) in sampling scheme \ref{assum-tau_n}. Then, $\hat{m}_{A,n}  \xrightarrow[]{p} m_A$. Furthermore, if \ref{Assum-mo4+2delta} holds, then   $        \sum_{l=0}^{n-1} \bm{E}[Z_l^{-1}] \coloneqq \lambda_n$ converges to $\lambda $ as $n \to \infty$ and 
\begin{align*}
      \frac{1}{\hat{r}_{n}^{n}} \sqrt{J_n} ( \hat{m}_{A,n} -m_A ) \xrightarrow[]{d} N(0,  \sigma^2_{\tau} ), ~\text{where},
 \end{align*}
\begin{align}
\label{def-sigma_tau2}
    \sigma^2_{\tau}=
    \begin{cases}
           \exp \left(\frac{(\lambda-\lambda_\tau)\gamma^{2*}}{m_{2}^*}   \right) \sigma_{I}^2 \quad &\text{Under case (i)}  \\
        \sigma_{F}^2(\delta_{m}) \quad &\text{Under case (ii)}\\
        \sigma_{I}^2  \quad &\text{Under case (iii)}
    \end{cases}
\end{align}
and $\sigma_{I}^2 $ and $ \sigma_{F}^2(\delta) $ are positive and finite. 
\end{theorem}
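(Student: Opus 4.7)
My plan is to decompose the estimator error into two pieces and analyze them separately. Writing $S_{n,j} \coloneqq \sum_{l=\tau_n}^n Z_{l,j}$ and $\bar{S}_n \coloneqq J_n^{-1}\sum_{j=1}^{J_n} S_{n,j}$, the starting identity is
\begin{align*}
\hat{m}_{A,n} - m_A = \frac{\mathcal{N}_n^*}{\hat{\mathcal{N}}_n}\left(\frac{\bar{S}_n}{\mathcal{N}_n^*} - m_A\right) + m_A\left(\frac{\mathcal{N}_n^*}{\hat{\mathcal{N}}_n} - 1\right) \eqqcolon T_{1,n} + T_{2,n}.
\end{align*}
Here $T_{1,n}$ captures the stochastic fluctuation of an i.i.d.\ sample mean, while $T_{2,n}$ captures the propagated error from replacing $m^*$ by $\hat{m}_n$ in the normalizing factor $\mathcal{N}_n^*$.

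For consistency, I would treat $T_{2,n}$ first. Viewing $\mathcal{N}_n^*/\hat{\mathcal{N}}_n$ as a smooth function of the ratio $\hat{m}_n/m^*$, its dominant contribution behaves like $(m^*/\hat{m}_n)^n$, so consistency follows from the rate result for $(\hat{m}_n/m^*)^n \to 1$ developed in Section \ref{sec-other}. The hypothesis $n/\log J_n \to 0$, combined with the $(J_n(n-\tau_n))^{1/2}$-rate for $\hat{m}_n$ from Theorem \ref{consis-CLT-mo}, is exactly what is needed to ensure $n(\hat{m}_n-m^*) = o_p(1)$. For $T_{1,n}$, the replicates $S_{n,j}/\mathcal{N}_n^*$ are i.i.d.\ with mean $m_A$ and variance of order $r^{*(2n)}$, so Chebyshev's inequality gives $T_{1,n} \xrightarrow{p} 0$ provided $r^{*(2n)}/J_n \to 0$, which is implied by $n/\log J_n \to 0$.

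For the central limit theorem, the first task is to show that under the scaling $\sqrt{J_n}\,\hat{r}_n^{-n}$ the term $T_{2,n}$ is $o_p(1)$. A first-order expansion gives $T_{2,n} \sim -m_A\, n(\hat{m}_n - m^*)/m^*$ to leading order, so the scaled error is of order $n/(\hat{r}_n^n\sqrt{n-\tau_n})$, which vanishes in each of the three regimes because $\hat{r}_n^n$ grows exponentially in $n$ (tracking $r^{*n}$) while $n$ grows only polynomially. The dominant term $\sqrt{J_n}\,\hat{r}_n^{-n} T_{1,n}$ is then handled by the Lindeberg--Feller CLT applied to the triangular array $\{S_{n,j}/\mathcal{N}_n^* : 1 \le j \le J_n\}$; the Lyapunov form of the Lindeberg condition is verified from assumption \ref{Assum-mo4+2delta}, which provides the $(2+\delta)$-moment control after rescaling. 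The factor $\mathcal{N}_n^*/\hat{\mathcal{N}}_n \to 1$ within $T_{1,n}$ and the replacement of $r^{*n}$ by $\hat{r}_n^n$ in the scaling are both absorbed via Slutsky, the latter requiring $(\hat{r}_n/r^*)^n \xrightarrow{p} 1$, which again follows from the rate results in Section \ref{sec-other}.

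The main obstacle will be computing the limiting variance
\begin{align*}
\lim_n \frac{\bm{Var}(S_{n,1}/\mathcal{N}_n^*)}{r^{*(2n)}} = \sigma_\tau^2
\end{align*}
in each of the three sampling regimes. Using the BPRE second-moment recursion, $\bm{Var}(Z_{l,1})$ decomposes into an environmental contribution (of order $m^{*(2l)}\sigma^{2*}$) and a within-generation branching contribution (of order $m_2^{*l}\gamma^{2*}$), and analogous formulas hold for $\bm{Cov}(Z_{l,1}, Z_{l',1})$ through conditioning on the earlier generation. Summing the double sum $\sum_{\tau_n \le l,l' \le n}$, dividing by $\mathcal{N}_n^{*2}\,r^{*(2n)}$, and passing to the limit yields geometric-type expressions whose surviving parts depend on whether $\tau_n$ stays fixed, the window length $n-\tau_n$ stays fixed, or both diverge. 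In case (iii) the boundary effects near $\tau_n$ wash out and only the bulk constant $\sigma_I^2$ remains. In case (i), the fixed starting generation $\tau$ produces an additional exponential correction $\exp((\lambda-\lambda_\tau)\gamma^{2*}/m_2^*)$ that arises from the within-environment branching variance accumulated from generation $\tau$ onward; the finiteness of $\lambda = \lim_n \lambda_n$ must be established as a preliminary harmonic-moment lemma using \ref{Assum-mo4+2delta}, which is the analytically most delicate ingredient of the proof. In case (ii), the fixed window of length $\delta_m$ collapses the double sum to a finite polynomial in $m^*$ and $m_2^*$ that reduces by direct algebra to $\sigma_F^2(\delta_m)$. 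Positivity of both $\sigma_I^2$ and $\sigma_F^2(\delta)$ ultimately reduces, via \ref{AssumA2}, to the non-degeneracy $\gamma^{2*} + \sigma^{2*} > 0$.
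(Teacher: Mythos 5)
Your overall architecture coincides with the paper's: the decomposition into $T_{1,n}+T_{2,n}$ is exactly display (\ref{consis-CLT4mataunJ-1}), consistency is obtained the same way (Lemma \ref{mathcal_N-p1} plus Chebyshev with $r^{*2n}/J_n\to 0$), the scaled $T_{2,n}$ is killed exactly as in Lemma \ref{CLT4maC1}, and the CLT for the triangular array $\{S_{n,j}/\mathcal{N}_n^*\}$ with a Lindeberg/Lyapunov verification driven by \ref{Assum-mo4+2delta} and $n/\log J_n\to 0$ is the paper's route (via Corollary \ref{PHclt-da-C} and Theorem \ref{HL-zn/c-limit}).

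There is, however, a genuine gap in your treatment of case (i), and it sits in two linked claims. First, you assert that replacing $r^{*n}$ by $\hat r_n^n$ is "absorbed via Slutsky, the latter requiring $(\hat r_n/r^*)^n\xrightarrow{p}1$." That convergence is false when $\tau_n\equiv\tau$ is fixed: because $\bm{E}\bigl[(Z_{l+1}/Z_l)^2\mid\mathcal{F}_l\bigr]=m_l^2+\sigma_l^2/Z_l$, the estimator $\hat m_{2,n}$ carries a bias $b_{2,n}^*=\gamma^{2*}(n-\tau_n)^{-1}\sum_{l}\bm{E}[Z_l^{-1}]$ whose $n$-th power does not wash out when the harmonic-moment tail sum from $\tau$ stays bounded away from zero; this is precisely Theorem \ref{ro^n to 1}, which gives a non-unit limit for $(\hat r_n/r^*)^n$ in case (i) and limit $1$ only when $\tau_n\to\infty$. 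Second, you plan to recover the case-(i) factor $\exp\bigl((\lambda-\lambda_\tau)\gamma^{2*}/m_2^*\bigr)$ from the limit of $r^{*-2n}\bm{Var}(S_{n,1}/\mathcal{N}_n^*)$, attributing it to branching variability "accumulated from generation $\tau$ onward." That computation will not produce it: the double sum over $\bm{Var}(Z_l)$ and $\bm{Cov}(Z_l,Z_{l'})$ involves only $\mathfrak{D}_l$, $m^*$, $m_2^*$, $\sigma_A^2$, and after normalization its dominant contributions come from $l$ near $n$, so the limit is the \emph{same} $\sigma_I^2$ in cases (i) and (iii) (this is Lemma \ref{var-sum-zn}; the harmonic moments $\bm{E}[Z_l^{-1}]$ never enter these unconditional variances). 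In other words, with your two claims as stated you would prove the limit $N(0,\sigma_I^2)$ in case (i), not $N(0,\exp((\lambda-\lambda_\tau)\gamma^{2*}/m_2^*)\sigma_I^2)$. The fix is to prove the analogue of Theorem \ref{ro^n to 1} — that $(\hat m_{2,n}/m_2^*)^n$ converges in probability to the exponential of the harmonic-moment tail sum in case (i) and to $1$ in cases (ii)–(iii) — and only then apply Slutsky; the case distinction in $\sigma_\tau^2$ is entirely a plug-in effect of the estimated coefficient of variation, not a property of the normalized sum. (Two minor points: the finiteness of $\lambda$ needs only \ref{AssumA1}–\ref{AssumA2} via the harmonic-moment bound of Lemma \ref{sum1/zn}, not \ref{Assum-mo4+2delta}; and your Lyapunov step should be phrased with the $(4+2\delta)$-th moments of $Z_n$ and of $Z_{0,1}$, which is what \ref{Assum-mo4+2delta} supplies.)
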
 
\begin{remark} 
\label{remark-asy-var}
The formulae for the limiting variances in all three cases depend on the parameters of the marginal distribution.  The notation $\sigma^2_F$ represents the limiting variance based on a fixed number of generations in the estimation of $m_A$, whereas $\sigma^2_I$ represents the case when the number of generations involved in the estimate diverges. Below, in Lemma \ref{var-sum-zn}, we will show that $\sigma^2_F(\delta_m)$ increases to $\sigma^2_I$ as $\delta_m \nearrow \infty$. Using this and that $\lambda >\lambda_{\tau}$, it follows that 
\begin{align}
\label{mataunJ-limit-var}
    \exp \left(\frac{(\lambda-\lambda_\tau)\gamma^{2*}}{m_{2}^*}   \right) \sigma_{I}^2 > \sigma_{I}^2  > \sigma_{F}^2(\delta_{m}).
\end{align}
We notice that the limiting variance is minimized when a finite number of generations are involved in the estimator of $m_A$ (case (ii)). However, two pertinent issues arise, namely, the effect on the estimation of $m^*$ and on the learning that we study in Section \ref{sec-slj} below.
\end{remark}

We now turn to the asymptotic behavior of the estimator for $m^*$. Since for any $l,j$, $\bm{E}[ Z_{l+1,j}  / Z_{l,j}] =m^*$, $\hat{m}_{n}$ in (\ref{def-mo_taunj}) is a method of moments estimator of $m^*$. Our next Theorem concerns the consistency and asymptotic normality of $\hat{m}_{n} $ in all three cases described in the sampling scheme \ref{assum-tau_n}.
\begin{theorem}
\label{consis-CLT-mo}
Let \ref{AssumA1}-\ref{Assum-gamma} hold and $ \tau_n $ satisfy one of the cases (i)-(iii) in sampling scheme \ref{assum-tau_n}. If $J_n(n-\tau_n) \to \infty$, then 
$\hat{m}_{n}\xrightarrow[]{p} m^* $ and
\begin{align*}
 \sqrt{J_n(n-\tau_n)} (\hat{m}_{n} - m^*) \xrightarrow[]{d} N(0, \sigma^{2*})    . 
\end{align*} 
Moreover, if $J_n(n-\tau_n)=O(n^{\alpha})$ for some $\alpha>1$, then $\hat{m}_{n}\to m^* $ w.p.1.
\end{theorem}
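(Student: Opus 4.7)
\textbf{Proof plan for Theorem \ref{consis-CLT-mo}.}

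My plan is to decompose the estimation error into an ``environmental'' piece that is exactly a sum of i.i.d.\ mean-zero random variables, plus a ``conditional sampling'' piece whose variance is governed by the harmonic moments $\bm{E}[Z_{l-1,j}^{-1}]$. Adding and subtracting $m_{l-1,j}$ inside each ratio $Z_{l,j}/Z_{l-1,j}$ gives
\begin{align*}
\hat{m}_n - m^*
= \underbrace{\frac{1}{J_n(n-\tau_n)}\sum_{j=1}^{J_n}\sum_{l=\tau_n+1}^n (m_{l-1,j}-m^*)}_{T_{1,n}}
+ \underbrace{\frac{1}{J_n(n-\tau_n)}\sum_{j=1}^{J_n}\sum_{l=\tau_n+1}^n\!\Big(\frac{Z_{l,j}}{Z_{l-1,j}}-m_{l-1,j}\Big)}_{T_{2,n}}.
\end{align*}
Conditionally on $\zeta_{l-1,j}$ and $Z_{l-1,j}$, the ratio $Z_{l,j}/Z_{l-1,j}$ is a sample mean of $Z_{l-1,j}$ i.i.d.\ offspring with conditional mean $m_{l-1,j}$ and conditional variance $\sigma^2_{l-1,j}$; every moment calculation below will rest on this observation.

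First I would analyze $T_{1,n}$. The environments $\{\zeta_{l-1,j}\}$ are i.i.d.\ across both indices, so the centered conditional means $m_{l-1,j}-m^*$ are i.i.d.\ with mean $0$ and variance $\sigma^{2*}$. Because $J_n(n-\tau_n)\to\infty$ by hypothesis, re-indexing the $J_n(n-\tau_n)$ summands and invoking the classical CLT yields
\begin{align*}
\sqrt{J_n(n-\tau_n)}\,T_{1,n} \xrightarrow{d} N(0,\sigma^{2*}).
\end{align*}

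Next I would show that $T_{2,n}$ is negligible at the same scaling. Independence across replicates and orthogonality across generations within a replicate (conditioning on the natural filtration generated by $\{\zeta_{k,j},Z_{k,j}:k<l\}$ kills cross-terms) give
\begin{align*}
\bm{Var}(T_{2,n}) = \frac{\gamma^{2*}}{J_n(n-\tau_n)^2}\sum_{l=\tau_n+1}^n \bm{E}[Z_{l-1,1}^{-1}],
\end{align*}
so $J_n(n-\tau_n)\bm{Var}(T_{2,n})$ equals $\gamma^{2*}$ times the Ces\`aro average $(n-\tau_n)^{-1}\sum_{l=\tau_n+1}^n\bm{E}[Z_{l-1,1}^{-1}]$. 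Under \ref{AssumA1}--\ref{AssumA2} we have $Z_l\to\infty$ a.s.\ and $Z_l^{-1}\le 1$, hence $\bm{E}[Z_l^{-1}]\to 0$ by bounded convergence. The three regimes of sampling scheme \ref{assum-tau_n} are handled uniformly: in case (i) the expression is the Ces\`aro average of a null sequence; in case (ii) each of the $\delta_m$ summands has index $l-1\ge\tau_n\to\infty$; in case (iii) both $\tau_n$ and $n-\tau_n$ diverge. In every case $\sqrt{J_n(n-\tau_n)}\,T_{2,n}\to 0$ in $L^2$, hence in probability, and Slutsky's lemma yields the stated CLT.

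The same $L^2$ estimates give $\bm{Var}(\hat{m}_n) = O((J_n(n-\tau_n))^{-1})\to 0$, which proves consistency. For the almost sure statement, Chebyshev's inequality gives $\bm{P}(|\hat{m}_n-m^*|>\varepsilon) = O((J_n(n-\tau_n))^{-1})$; this is summable in $n$ whenever $J_n(n-\tau_n)$ is of order $n^\alpha$ with $\alpha>1$, and the Borel--Cantelli lemma then delivers $\hat{m}_n\to m^*$ w.p.1. The main technical obstacle I anticipate is verifying the uniform vanishing of the Ces\`aro average of harmonic moments across all three sampling regimes using only the second-moment hypothesis \ref{Assum-gamma}, rather than the stronger condition \ref{Assum-mo4+2delta} used elsewhere in the paper.
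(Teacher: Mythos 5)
Your proposal is correct, and its skeleton is the same as the paper's: the identical decomposition of $\hat m_n-m^*$ into the environmental average (your $T_{1,n}$, the paper's $S_{2,n}$) and the conditional-sampling term (your $T_{2,n}$, the paper's $S_{1,n}$), Chebyshev bounds for negligibility and consistency, a CLT for the i.i.d.\ array $\{m_{l,j}-m^*\}$ (the paper does this via characteristic functions, which is the same computation as your ``re-index and apply the classical CLT''), Slutsky, and Borel--Cantelli under $J_n(n-\tau_n)\asymp n^\alpha$ for the almost sure statement. The one genuine divergence is how the conditional-sampling term is controlled: the paper invokes its harmonic-moment Lemma (via Grama--Liu), which gives the geometric bound $\bm{E}[Z_l^{-1}]\le c_H a^l$ and hence Lemma \ref{varsumYl-finite}, and then argues the three regimes of sampling scheme \ref{assum-tau_n} separately; you instead observe that $\bm{E}[Z_l^{-1}]\to 0$ by bounded convergence (since $Z_l\ge 1$ and $Z_l\to\infty$ w.p.1 under \ref{AssumA1}--\ref{AssumA2}) and dispose of all three regimes at once through the Ces\`aro average $(n-\tau_n)^{-1}\sum_{l}\bm{E}[Z_{l-1}^{-1}]\to 0$. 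That is a more elementary route which fully suffices for this theorem (the final worry you raise is unfounded: the paper's harmonic-moment lemma itself uses only \ref{AssumA1}--\ref{AssumA2}, not \ref{Assum-mo4+2delta}), whereas the paper's stronger geometric bound is what it reuses later, e.g.\ for the finiteness of $\lambda=\lim_n\lambda_n$ in Theorem \ref{consis-CLT4mataunJ}, so nothing is lost there by your shortcut here.
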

The estimator $\hat{m}_{n} $ and analysis differ from those in \cite{Li-Vidya2019} where they obtain the estimator by dividing ``the total number of children by the total number of parents''; that is, 
\begin{align}
  \hat{m}_{\tau,n,J}^{(0)}  =\frac{\sum_{j=1}^{J}  \sum_{l=\tau+1}^n Z_{l,j}  }{\sum_{j=1}^{J}  \sum_{l=\tau+1}^n Z_{l-1,j} }. \label{mo_estld}
\end{align}
Note that we can rewrite (\ref{mo_estld}) as 
\begin{align}
 \hat{m}_{\tau,n,J}^{(0)}  
  &= \sum_{j=1}^{J}   w_j^{(B)}\sum_{l=\tau+1}^n   w_l^{(W)} \frac{Z_{l,j}}{Z_{l-1,j}}, \quad \text{where}  \label{mo_estld-2}    
\end{align}
\begin{align*}
   w_j^{(B)}= \frac{ \sum_{l=\tau+1}^n Z_{l-1,j} }{\sum_{j'=1}^{J}  \left( \sum_{l=\tau+1}^n Z_{l-1,j'} \right) }  \quad \text{and} \quad
    w_l^{(W)} =\frac{Z_{l-1,j}}{\sum_{l'=\tau+1}^n Z_{l'-1,j}}.
\end{align*}
In (\ref{mo_estld-2}), note that $  w_j^{(B)}$ represents the \emph{between replicate weight} and $ w_l^{(W)} $ represents the \emph{ within replicate weight}. 
However, our approach assigns equal weight to each ratio of children to parents across all generations and replicates, implying that $ w_j^{(B)} \equiv \frac{1}{J} $ and $w_l^{(W)} \equiv \frac{1}{n-\tau}$. We will demonstrate that this ``equal between and within replicate weight" guarantees the asymptotic properties of $\hat{m}_{A,n}$.
Furthermore, while (\ref{mo_estld}) assumes $\tau $ and $n$ to remain constant, the asymptotic properties of our estimator can be derived when the starting point $\tau_n $ is subject to different conditions depending on the number of generations  $n $, which is also allowed to diverge to infinity.

\subsection{Other data structure}

Before turning to the proof of the asymptotic properties of $\hat{m}_{A,n}  $, we first study the simple data structure where only the last two generations $(Z_{n-1,j}, Z_{n,j})$ are observable for each replicate. This is common in many real-world problems.  Under this data structure, our estimators for $m_A $ and $m^*$ are
\begin{align}
     \tilde{m}_{A,n}= \frac{1}{J_n} \sum_{j=1}^{J_n} \frac{Z_{n,j}}{{ \tilde{m}_{n}^{n}}} ,  \quad \tilde{m}_{n}=\frac{1}{J_n} \sum_{ j=1 }^{J_n}   \frac{ Z_{n,j}  } {Z_{n-1,j}}   \label{def-m1}.
\end{align}
Notice that although $\tilde{m}_{n} = \hat{m}_{n-1,n,J_n} $, we only use the last generation to estimate $m_A $, which is different from the estimator $\hat{m}_{A,n-1,n,J_n} $. The next theorem illustrates the asymptotic properties of $ \tilde{m}_{A,n}$, and its proof is a corollary to the proof of Theorem \ref{consis-CLT4mataunJ}.
\begin{theorem}
\label{consis-CLT4ma}
Let\ref{AssumA1}-\ref{Assum-gamma} hold and assume that $\frac{n}{ \log J_n} \to 0 $. Then, $\tilde{m}_{A,n} \xrightarrow[]{p} m_A$ as $ n \to \infty$.
Moreover, if \ref{Assum-mo4+2delta} holds, then
\begin{align} 
     \frac{1}{\tilde{r}_{n}^n} \sqrt{J_n} ( \tilde{m}_{A,n} -m_A ) \xrightarrow[]{d} N(0,  \frac{\mathfrak{D} }{m_{2}^*} ), \quad \text{where} \label{ma-clt}
\end{align}
\begin{align}
\tilde{m}_{2,n}= \frac{1}{J_n} \sum_{ j=1 }^{J_n}    \left(  \frac{ Z_{l,j}  } {Z_{l-1,j}}    \right)^2  , \quad  \text{and}~~ \tilde{r}_{n}^2=\frac{\tilde{m}_{2,n}}{  \tilde{m}_{n}^2}  . \label{def-m_21-r1}
\end{align}
\end{theorem}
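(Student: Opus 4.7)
The plan is to mirror the scheme of Theorem \ref{consis-CLT4mataunJ} in the much simpler setting where only the terminal generation is used in the numerator. Write
\begin{align*}
\tilde{m}_{A,n} - m_A = \left(\frac{m^*}{\tilde{m}_n}\right)^{n} (\bar{Y}_n - m_A) + m_A \left[ \left(\frac{m^*}{\tilde{m}_n}\right)^{n} - 1 \right], \quad \bar{Y}_n = \frac{1}{J_n}\sum_{j=1}^{J_n} \frac{Z_{n,j}}{m^{*n}}.
\end{align*}
Since $\bm{E}[Z_{n,j}]=m_A m^{*n}$, $\bar{Y}_n$ has mean $m_A$, and the recursion $\bm{E}[Z_n^2]=m_2^* \bm{E}[Z_{n-1}^2]+\gamma^{2*}\bm{E}[Z_{n-1}]$ together with $\sigma^{2*}=m_2^*-m^{*2}$ gives, after direct algebra,
\begin{align*}
\frac{\bm{Var}(Z_{n,j}/m^{*n})}{r^{*2n}} \longrightarrow m_A^2+\sigma_A^2+\frac{m_A\gamma^{2*}}{m_2^*-m^*} = \frac{\mathfrak{D}}{m_2^*},
\end{align*}
which is the target asymptotic variance. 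For consistency (before invoking \ref{Assum-mo4+2delta}), a Chebyshev argument combined with $\log J_n/n \to \infty$ gives $\bar{Y}_n \xrightarrow{p} m_A$, since $\bm{Var}(\bar{Y}_n)=O(r^{*2n}/J_n)$ and $J_n\gg r^{*2n}$ eventually. The factor $(m^*/\tilde{m}_n)^n$ is handled by the expansion $n\log(\tilde{m}_n/m^*)=n(\tilde{m}_n-m^*)/m^*+O_p(n(\tilde{m}_n-m^*)^2)$, noting that $\tilde{m}_n-m^*=O_p(J_n^{-1/2})$ by the central limit theorem across replicates and that $\log J_n/n\to\infty$ in particular forces $n/\sqrt{J_n}\to 0$, so $(m^*/\tilde{m}_n)^n\xrightarrow{p} 1$.

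For the CLT, scale by $\sqrt{J_n}/\tilde{r}_n^n$ and treat the two pieces separately. The main piece $\sqrt{J_n}\,r^{*(-n)}(\bar{Y}_n-m_A)$ is the sum, over $j=1,\dots,J_n$, of the i.i.d.\ (for fixed $n$) summands $X_{n,j}=(Z_{n,j}/m^{*n}-m_A)/(r^{*n}\sqrt{J_n})$; I would verify the Lindeberg condition for this triangular array via a Lyapunov bound using \ref{Assum-mo4+2delta}. Specifically, for some $\eta>0$ inherited from the $\delta$ in \ref{Assum-mo4+2delta}, the recursion for moments of $Z_n$ yields $\bm{E}[|Z_{n,1}/m^{*n}|^{2+\eta}]=O((m_{2+\eta}^*/m^{*(2+\eta)})^n)$, and the Lindeberg ratio reduces to
\begin{align*}
J_n^{-\eta/2}\left( m_{2+\eta}^*/m_2^{*(1+\eta/2)} \right)^n,
\end{align*}
which vanishes under $n/\log J_n\to 0$ since the logarithm of the second factor is linear in $n$ while $\log J_n$ dominates $n$. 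Establishing this uniform-in-$n$ moment control is the main obstacle, but it is the direct analog of the harmonic-moment estimates already invoked in Theorem \ref{consis-CLT4mataunJ}. The residual piece, $m_A\sqrt{J_n}\tilde{r}_n^{-n}[(m^*/\tilde{m}_n)^n-1]$, is negligible: the expansion above yields $O_p(n\sqrt{J_n}/(r^{*n}\sqrt{J_n}))=O_p(n/r^{*n})=o_p(1)$ because $r^*>1$ by Jensen's inequality in conjunction with \ref{AssumA2}.

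It remains to replace $r^{*n}$ by $\tilde{r}_n^n$ via Slutsky, i.e.\ to show $(\tilde{r}_n/r^*)^n\xrightarrow{p} 1$. Since $\tilde{r}_n=g(\tilde{m}_n,\tilde{m}_{2,n})$ for the smooth function $g(x,y)=\sqrt{y}/x$, the delta method applied to the pair $(\tilde{m}_n,\tilde{m}_{2,n})$ of i.i.d.\ sample means across replicates (with finite second moments supplied by \ref{Assum-mo4+2delta} since this requires a fourth moment of $Z_{n,j}/Z_{n-1,j}$, itself controlled by $m_4^*$ on the environment and a conditional-variance calculation) gives $\tilde{r}_n-r^*=O_p(J_n^{-1/2})$. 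Expanding $n\log(\tilde{r}_n/r^*)=O_p(n/\sqrt{J_n})=o_p(1)$ under $n/\log J_n\to 0$ then finishes the argument. Assembling the three pieces by Slutsky delivers (\ref{ma-clt}).
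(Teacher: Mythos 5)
Your proposal is correct and follows essentially the same route as the paper's proof: the same decomposition of $\tilde{m}_{A,n}-m_A$ into the normalized last-generation average plus the $(m^{*}/\tilde{m}_n)^n-1$ remainder (the paper's Lemma \ref{CLT4ma-1} and Lemma \ref{CLT4maC1}), the same variance identity from (\ref{varzn1}) giving the limit $\mathfrak{D}/m_{2}^*$, a Lindeberg/Lyapunov verification for the triangular array based on the moment growth $\bm{E}[Z_n^{t}]=O(m_t^{*n})$ under \ref{Assum-mo4+2delta}, and a final Slutsky step replacing $r^{*n}$ by $\tilde{r}_n^n$ (the paper's Theorem \ref{ro^n to 1}). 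The one imprecision is your claim $\tilde{r}_n-r^*=O_p(J_n^{-1/2})$: since $\bm{E}\bigl[(Z_{n,j}/Z_{n-1,j})^2\bigr]=m_2^*+\gamma^{2*}\bm{E}[Z_{n-1}^{-1}]$, the estimator $\tilde{m}_{2,n}$ carries a geometrically small bias in addition to the $O_p(J_n^{-1/2})$ fluctuation (this bias is exactly what produces the extra exponential factor in Theorem \ref{ro^n to 1} when $\tau$ is fixed); here it is harmless because $n\,\bm{E}[Z_{n-1}^{-1}]\to 0$, so $n\log(\tilde{r}_n/r^*)=o_p(1)$ still holds and your conclusion stands.
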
 
\noindent In Section \ref{sec-var} below we provide estimators for the variance of the ancestor ($\sigma_A^2$) under this data structure. We will prove their consistency under the assumptions \ref{AssumA1}-\ref{Assum-mo4+2delta}.

The asymptotic properties of  
$m^{*(-n)} Z_{n,1} $ rely on the $Var(Z_{n,1})$ which 
by Proposition 1 in \cite{Li-Vidya2019} is given by
\begin{align}
   \bm{Var} (Z_{n,1})=m_{2}^{*(n-1)} \mathfrak{D}_n + m^{*2n} \sigma_A^2, \quad{where} \label{varzn1}
\end{align}
\begin{align}
\mathfrak{D}_n= m_A\gamma^{2*} \left( \frac{1-(r^{*2}m^*)^{-n}}{1-(r^{*2}m^*)^{-1}} \right) +   (m_A^2+\sigma_A^2) \sigma^{2*} \left( \frac{1-r^{*(-2n)}}{1-r^{*(-2)}} \right), \text{ and }
  \mathfrak{D}_n \nearrow \mathfrak{D}  . \label{varzn1-D}
\end{align}  
This implies that variance of $m^{*(-n)} Z_{n,1} $  diverges like $r^{*n} $. 

The scaling factor $r^{*n} $ captures the variability introduced by the random environment $\zeta_{l,j} $ 
 and differs from the GW process model studied in \cite{Bret-Vidya2011} where  $r^{*2}= \frac{m_{2}^*}{m^{*2}} =1 $. Consequently, the scaling factor does not influence the estimation process in the GW model. 

This difference underscores one of the main challenges in establishing the asymptotic properties of our estimator within the BPRE framework. Specifically, when applying an estimator for $r^* $ in the BPRE model, unlike the case for the marginal offspring mean as stated in Theorem \ref{mo^n to 1}, the ratio $(r^{*(-1)} \hat{r}_{\tau_n,n,J_n})^n$ does not converge to 1 but to a fixed value that needs to be determined depending on the behavior of $\tau_n$. This is carried out in Theorem \ref{ro^n to 1}. We now turn to characterizing the asymptotic variance alluded to in Theorem \ref{consis-CLT4mataunJ}.
\begin{lemma}
\label{var-sum-zn}
Let \ref{AssumA1}-\ref{Assum-mo4+2delta} hold and let $\frac{n}{\log J_n}\to 0 $. Then, 
    \begin{align*}
    \bm{Var} \left[\frac{1}{r^{*n}} \frac{1}{\mathcal{N}_{n}^*} \sum_{l=\tau_n}^n Z_{l,j} \right] \to \begin{cases}
        \sigma_{I}^2 , \quad  & \text{under cases(i) and (iii)} \\
        \sigma_{F}^2(\delta_{m}) &  \text{under cases(ii)} 
    \end{cases} ,
\end{align*}
where $ \sigma_{I}^2$ and $\sigma_{F}^2(\delta) $ is specified in Remark \ref{remark-asy-var}. Furthermore, as $\delta_{m} \to \infty$, $\sigma^2_F(\delta_{m}) \to \sigma_{I}^2$.  $\sigma^2_F(0)= \frac{\mathfrak{D}}{m_2^*} $, which is the asymptotic variance of $\tilde{m}_{A,n} $.
\end{lemma}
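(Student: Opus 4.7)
The plan is to compute $\bm{Var}\!\left[\sum_{l=\tau_n}^n Z_{l,j}\right]$ in closed form via the covariance structure of the BPRE together with the exact variance formula in (\ref{varzn1}), then divide by $r^{*2n}\mathcal{N}_n^{*2}$ and analyze the limit in each sampling regime.

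The first step is to derive the covariance. Since $\{\zeta_{i,j}\}_{i\ge k}$ is independent of the $\sigma$-field generated by $Z_{0,j},\dots,Z_{k,j}$, a short induction using $\bm{E}[Z_{l+1,j}\mid Z_{l,j},\zeta_{l,j}] = Z_{l,j}m_{l,j}$ together with $\bm{E}[m_{l,j}]=m^{*}$ yields $\bm{E}[Z_{l,j}\mid Z_{k,j}] = Z_{k,j}\,m^{*(l-k)}$ for $l\ge k$, so
\[
\mathrm{Cov}(Z_{k,j},Z_{l,j}) \;=\; m^{*(l-k)}\bm{Var}(Z_{k,j}), \qquad k\le l.
\]
Assembling this into a double sum and collapsing the inner geometric sum in the covariance terms gives
\[
\bm{Var}\!\left[\sum_{l=\tau_n}^n Z_{l,j}\right] = \sum_{l=\tau_n}^n \bm{Var}(Z_{l,j}) \;+\; \frac{2m^*}{m^*-1}\sum_{k=\tau_n}^{n-1}\bm{Var}(Z_{k,j})\bigl(m^{*(n-k)}-1\bigr).
\]
Substituting $\bm{Var}(Z_{k,j}) = m_2^{*(k-1)}\mathfrak{D}_k + m^{*2k}\sigma_A^2$ from (\ref{varzn1}) reduces the expression to finite geometric sums in $m_2^*$, $m^*$, and $m_2^*/m^*$. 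Since $r^{*2}=m_2^*/m^{*2}>1$ by Jensen's inequality, the $m_2^{*(k-1)}\mathfrak{D}_k$ piece dominates $m^{*2k}\sigma_A^2$ asymptotically.

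The second step is to normalize by $r^{*2n}\mathcal{N}_n^{*2}$ and take limits. In cases (i) and (iii), $\mathcal{N}_n^*\sim m^{*(n+1)}/(m^*-1)$, and the dominant indices in the geometric sums lie near $n$, so $\mathfrak{D}_k$ may be replaced by $\mathfrak{D}$ up to $o(1)$ error; both cases yield the same closed-form limit which, after algebraic simplification, equals $\sigma_I^2$. In case (ii), only the $\delta_m+1$ indices $k=n-\delta_m,\dots,n$ appear, $\mathcal{N}_n^*\sim m^{*n}(m^{*(\delta_m+1)}-1)/(m^{*\delta_m}(m^*-1))$, and direct evaluation of the resulting finite sums (all $\mathfrak{D}_k\to\mathfrak{D}$ since $k\to\infty$) produces $\sigma_F^2(\delta_m)$. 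The main obstacle is the algebraic bookkeeping, especially in case (i), where one must verify that the finitely many early indices for which $\mathfrak{D}_k$ differs appreciably from $\mathfrak{D}$ contribute only $o(r^{*2n}\mathcal{N}_n^{*2})$ (immediate since those terms grow only like $m_2^{*\tau}m^{*n}$, which is dominated by the normalization $m_2^{*n}m^{*2}$), and that the resulting expression simplifies into the precise closed forms stated.

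Finally, the two auxiliary assertions follow by direct algebra on the closed form of $\sigma_F^2(\delta)$. Setting $\delta=0$, the bracket reduces to $2m^*-(m^*+1)=m^*-1$, which cancels with $(m^{*2}-m^*)^2=m^{*2}(m^*-1)^2$ in the denominator and yields $\sigma_F^2(0)=\mathfrak{D}/m_2^*$. Letting $\delta_m\to\infty$, the dominant contributions come from the $m_2^{*(\delta+1)}$ factors; all $m^{*(\delta+2)}$ terms are lower order since $m^*/m_2^*<1$, and the surviving expression simplifies (after dividing numerator and denominator by $m_2^{*(\delta+1)}$) to the same closed form obtained in cases (i) and (iii), namely $\sigma_I^2$.
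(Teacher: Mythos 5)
Your proposal is correct and follows essentially the same route as the paper: you derive the covariance identity $\mathrm{Cov}(Z_{k,j},Z_{l,j})=m^{*(l-k)}\bm{Var}(Z_{k,j})$ (the paper's Lemma \ref{varsumZl}), substitute the exact variance formula (\ref{varzn1}), normalize by $r^{*2n}\mathcal{N}_n^{*2}$, show the $\sigma_A^2$ contribution vanishes because $r^{*}>1$, and evaluate the remaining geometric sums to obtain $\sigma_I^2$ in cases (i) and (iii) and $\sigma_F^2(\delta_m)$ in case (ii), with the auxiliary claims $\sigma_F^2(0)=\mathfrak{D}/m_2^*$ and $\sigma_F^2(\delta_m)\to\sigma_I^2$ checked by the same direct algebra. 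Your explicit justification for replacing $\mathfrak{D}_k$ by $\mathfrak{D}$ is a minor added detail the paper leaves implicit, but it is not a different method.
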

The details of the proof of this lemma are outlined in Appendix \ref{apd-c-var-sum-zn}.

\begin{remark}[Comparison to GW case] \label{compareGW}
    Under the GW process, the offspring distributions are i.i.d., implying that $m^{*2}=m_2^* $ and $\sigma^{2*}=0 $. Consequently, this yields $\sigma_{I}^2=\frac{\mathfrak{D}}{m^{2*}} $ and $\sigma_{F}^2(\delta) =\frac{\mathfrak{D}}{m^{2*}}$ for any $\delta $. Referring to (\ref{varzn1-D}), the limit of $\mathfrak{D}_n $ will be $\frac{ m_A\gamma^{2*} m^* }{m^*-1}$. Additionally, because $r^*=1 $, we no longer need to estimate it, and therefore, the exponential term described in Case 1 of (\ref{def-sigma_tau2}) is omitted. According to (\ref{varzn1}), the limiting variance is $\frac{ m_A\gamma^{2*}  }{m^*(m^*-1)}+ \sigma_A^2 $, which corresponds to the limiting variance of Theorem 1 in \cite{Bret-Vidya2011}.
   This comparison highlights that, unlike the GW process, the limiting variance for BPRE depends on an additional factor, $r^* $.
\end{remark}

\noindent
\section{Statistical Learning, Joint Distributions, and Applications }
\label{sec-slj}
We now turn to the joint distribution of $\hat{m}_{A,n} $ and $ \hat{m}_n$. Motivated by results in Section \ref{sec-mr}, we cast this problem in terms of learning and divide the observations within a BPRE into two parts: one for estimating $m_A$ and the other for estimating $m^*$. This leads to the question: can one learn the parameters of the offspring distribution first and then use that to estimate the ancestor mean? It is to be noted here that the process evolves as a non-homogeneous branching process, conditioned on the environmental sequence.  Alternatively, heuristically speaking, it may be more ``efficient'' to use the initial generation to estimate the ancestor mean since more information may be available during those generations. It turns out that as the population is growing exponentially fast, choices of generations for estimating the parameters become an important issue.
Specifically, we establish that the rates of estimation for $\hat{m}_{A,n}$ and the limiting variances can be different depending on the sampling scheme. This is the content of our next two results.
Before we state the result, we need several notations. Let $\tau_{1,n}<\tau_{2,n}<n$ and set $\delta_{m,n}=\tau_{2,n}- \tau_{1,n} $ and $\delta_{m_A,n}=n- \tau_{2,n} $. Define
\begin{align*}
\hat{m}_{\tau_1,\tau_2,n}=\frac{1}{J_n(\tau_2-\tau_1)} \sum_{j=1}^{J_n} \sum_{l=\tau_1+1}^{\tau_2}\frac{Z_{l,j}}{Z_{l-1,j}}, ~~\text{and}~~
    \hat{m}_{A,\tau_1,\tau_2,n}=  \frac{1}{J_n \hat{\mathcal{N}}_{\tau_1,\tau_2,n}} \sum_{j=1}^{J_n}  \sum_{l=\tau_2}^n Z_{l,j} , 
\end{align*}
where to streamline the notation, we have suppressed $n$ in $\tau_{1,n} $ and $\tau_{2,n} $ to $\tau_1 $ and $\tau_2 $ respectively. Thus, the marginal offspring mean is estimated using data from $\tau_{1}^{\text{th}}$ to $\tau_{2}^{\text{th}}$ generation. In contrast, the ancestor mean is estimated using data from $\tau_{2}^{\text{th}} $ to $n^{\text{th}}$ generation.   
We will assume that the proportion of generations used to estimate $ m^*$, namely, $\frac{\delta_{m,n}}{n-\tau_{1,n}} $, converges to a fixed positive constant as $n \to \infty$.
As before,
\begin{align*}
\hat{\mathcal{N}}_{\tau_1,\tau_2,n}&= \frac{ \hat{m}_{\tau_1,\tau_2,n}^{\tau_2}(\hat{m}_{\tau_1,\tau_2,n}^{n-\tau_2 +1} -1)     } {\hat{m}_{\tau_1,\tau_2,n}-1   }.
\end{align*}
\begin{theorem}
\label{joint-CLT}
Let \ref{AssumA1}-\ref{Assum-mo4+2delta} hold and that $\frac{n}{ \log J_n} \to 0 $. $\tau_{2,n}-\tau_{1,n} \coloneqq \delta_{m,n} $ and $n-\tau_{2,n} \coloneqq \delta_{m_A,n} $, and $\frac{\delta_{m,n}}{n-\tau_{1,n}} \to p_{m} $, $\frac{\delta_{m_A,n}}{n-\tau_{1,n}} \to p_{m_A} $, where $p_{m} $ and $p_{m_A} $ are positive constants. Then,
\begin{align*}
\begin{pmatrix} 
\sqrt{(n-\tau_{1,n})J_n} & 0 \\
0& \frac{1}{\hat{r}^n} \sqrt{J_n}  
 \end{pmatrix}
      \begin{pmatrix}
     \hat{m}_{\tau_1,\tau_2,n} -  m^* \\
    \hat{m}_{A,\tau_1,\tau_2,n} - m_A
    \end{pmatrix}
    \xrightarrow[]{d}
    N \left( \begin{pmatrix} 
    0 \\ 0 \end{pmatrix} ,
     \begin{pmatrix}
    \frac{\sigma^{2*}}{p_{m} } & 0 \\
    0 &  \sigma^2_{\tau}
    \end{pmatrix}
    \right)
\end{align*}    
and 
\begin{align*}
    \sigma^2_{\tau}=
    \begin{cases}
           \exp \left(\frac{(\lambda-\lambda_\tau)\gamma^{2*}}{m_{2}^*}   \right) \sigma_{I}^2 \quad &\text{if case (i) is true}  \\
        \sigma_{F}^2(p_{m_A} \delta_{m}) \quad &\text{if case (ii) is true}\\
        \sigma_{I}^2  \quad &\text{if case (iii) is true}
    \end{cases}.
\end{align*}
\end{theorem}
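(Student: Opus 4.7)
The plan is to prove joint convergence by the Cram\'er--Wold device. Fix $(a,b)\in\mathbb{R}^{2}$ and set $A_n := \sqrt{(n-\tau_{1,n})J_n}(\hat{m}_{\tau_1,\tau_2,n}-m^{*})$ and $B_n := \hat{r}_n^{-n}\sqrt{J_n}(\hat{m}_{A,\tau_1,\tau_2,n}-m_A)$; I will show $aA_n+bB_n \xrightarrow[]{d} N(0,\, a^{2}\sigma^{2*}/p_{m} + b^{2}\sigma_{\tau}^{2})$. The two marginals will come from Theorems \ref{consis-CLT-mo} and \ref{consis-CLT4mataunJ}, and the diagonal covariance will be obtained by a conditioning argument that exploits the fact that $\hat{m}_{\tau_1,\tau_2,n}$ and $\hat{m}_{A,\tau_1,\tau_2,n}$ are built from essentially disjoint ranges of generations.

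\textbf{Marginals.} For $A_n$, Theorem \ref{consis-CLT-mo} applied to the window $(\tau_{1,n},\tau_{2,n}]$ of length $\delta_{m,n}$ gives $\sqrt{J_n\delta_{m,n}}(\hat{m}_{\tau_1,\tau_2,n}-m^{*}) \xrightarrow[]{d} N(0,\sigma^{2*})$; rescaling by $\sqrt{(n-\tau_{1,n})/\delta_{m,n}} \to p_{m}^{-1/2}$ yields the claimed variance $\sigma^{2*}/p_{m}$. For $B_n$, I decompose
\begin{align*}
\hat{m}_{A,\tau_1,\tau_2,n} - m_A \;=\; R_n\,(S_n - m_A) \;+\; m_A\,(R_n - 1),
\end{align*}
where $R_n := \mathcal{N}_n^{*}/\hat{\mathcal{N}}_{\tau_1,\tau_2,n}$, $\mathcal{N}_n^{*} := m^{*\tau_2}(m^{*(n-\tau_2+1)}-1)/(m^{*}-1)$, and $S_n := (J_n\mathcal{N}_n^{*})^{-1}\sum_{j=1}^{J_n}\sum_{l=\tau_2}^{n} Z_{l,j}$. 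The principal term $\hat{r}_n^{-n}\sqrt{J_n}(S_n - m_A)$ inherits the three-case asymptotic normality of Theorem \ref{consis-CLT4mataunJ}: case (i) retains the factor $\exp((\lambda-\lambda_\tau)\gamma^{2*}/m_{2}^{*})$ because $\hat{r}_n$ and $\hat{m}_{\tau_1,\tau_2,n}$ are built starting at the fixed $\tau$; case (ii) produces $\sigma_{F}^{2}(p_{m_A}\delta_{m})$ via Lemma \ref{var-sum-zn} with window length $\delta_{m_A,n}$; case (iii) produces $\sigma_{I}^{2}$.

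\textbf{Independence via conditioning.} Let $\mathcal{F}_n := \sigma(\{\zeta_{l,j},\xi_{l,j,k},Z_{0,j}:\, 0\le l\le\tau_{2,n},\,1\le j\le J_n,\,k\ge 1\})$. Then $A_n$, $R_n$, $\hat{r}_n$, and $\{Z_{\tau_2,j}\}$ are all $\mathcal{F}_n$-measurable, while the environments and offspring driving the process in generations $>\tau_{2,n}$ are independent of $\mathcal{F}_n$. Conditional on $\mathcal{F}_n$, the $J_n$ replicate sums $\sum_{l=\tau_2}^{n} Z_{l,j}$ are independent, each initiated from $Z_{\tau_2,j}$; a conditional Lindeberg--Feller CLT combined with the second-moment computation used in Lemma \ref{var-sum-zn} yields $\hat{r}_n^{-n}\sqrt{J_n}(S_n - m_A) \mid \mathcal{F}_n \xrightarrow[]{d} N(0,\sigma_\tau^{2})$, with limit variance that is non-random. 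Dominated convergence then upgrades this to joint convergence of $(A_n,B_n)$ to a product of independent Gaussians with the stated variances.

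\textbf{Main obstacle.} The crux is showing that the residual $\hat{r}_n^{-n}\sqrt{J_n}\,m_A(R_n-1)$ is $o_P(1)$, since this term is what couples the two estimators. Writing
\begin{align*}
R_n = \left(\frac{m^{*}}{\hat{m}_{\tau_1,\tau_2,n}}\right)^{\tau_2}\cdot\frac{(m^{*(n-\tau_2+1)}-1)(\hat{m}_{\tau_1,\tau_2,n}-1)}{(\hat{m}_{\tau_1,\tau_2,n}^{n-\tau_2+1}-1)(m^{*}-1)},
\end{align*}
and Taylor-expanding around $\hat{m}_{\tau_1,\tau_2,n}=m^{*}$ gives $R_n-1 = O_{P}(\tau_2(\hat{m}_{\tau_1,\tau_2,n}-m^{*}))$; combined with $\hat{m}_{\tau_1,\tau_2,n}-m^{*} = O_{P}((nJ_n)^{-1/2})$ from the first marginal, this yields $\hat{r}_n^{-n}\sqrt{J_n}(R_n-1) = O_{P}(\hat{r}_n^{-n}\sqrt{n}) = o_{P}(1)$ because $\hat{r}_n\to r^{*}>1$. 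Rigorizing this step requires the harmonic-moment machinery of Section \ref{sec-other}, in particular Theorem \ref{mo^n to 1} and its refinements for $(\hat{m}_{\tau_1,\tau_2,n}/m^{*})^{n}$, and the assumption $n/\log J_n\to 0$ is used precisely to keep these moments finite in the limit.
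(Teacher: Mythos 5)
Your overall architecture (Cram\'er--Wold, decomposition of $\hat{m}_{A,\tau_1,\tau_2,n}-m_A$ into a principal term plus a coupling residual $m_A(R_n-1)$, and killing the residual via Theorem \ref{mo^n to 1}-type control of $(\hat m/m^*)^{\tau_2}$ together with $n/\log J_n\to 0$) matches the paper's proof, which writes $S_{\tau_1,n}=\mathcal{R}_n U_{n,1}+U_{n,2}$ and disposes of $U_{n,2}$ via Lemma \ref{CLT4maC1}. The genuine gap is in your independence step. The paper does \emph{not} argue conditional independence given the history up to $\tau_{2,n}$; it computes $\bm{Cov}(T_{\tau_1,n},U_{n,1})$ explicitly (Lemma \ref{cov(TU)}) using $\bm{E}\bigl[(Z_{l_1}/Z_{l_1-1})Z_{l_2}\bigr]=m^{*(l_2-l_1)}(\gamma^{2*}+m_2^*m^{*(l_1-1)}m_A)$ and shows the covariance vanishes because of the $r^{*-n}$ scaling, precisely because the two statistics are \emph{not} independent — they share the randomness of generations up to $\tau_2$.

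Your conditioning argument, as stated, does not go through. Let $\mathcal{F}_n$ be the $\sigma$-field up to generation $\tau_{2,n}$. Conditionally on $\mathcal{F}_n$, the replicate sums are indeed independent across $j$, but
\begin{align*}
\bm{E}\Bigl[\tfrac{1}{\mathcal{N}_n^*}\sum_{l=\tau_2}^n Z_{l,j}\Bigm|\mathcal{F}_n\Bigr]=\frac{Z_{\tau_2,j}}{m^{*\tau_2}}\neq m_A ,
\end{align*}
so the conditional law of $B_n$ has a \emph{random} centering $r^{*-n}J_n^{-1/2}\sum_j\bigl(Z_{\tau_2,j}m^{*(-\tau_2)}-m_A\bigr)$, and the conditional variance omits exactly the pre-$\tau_2$ variability, which contributes a fraction of order $r^{*-2(n-\tau_{2,n})}$ of $\sigma^2_\tau$. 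Hence your claim that ``a conditional Lindeberg--Feller CLT yields $N(0,\sigma^2_\tau)$ with non-random limit variance'' is false in case (ii), where $n-\tau_{2,n}\to p_{m_A}\delta_m$ stays bounded so that both the random centering and the missing variance are non-negligible (note $Z_{\tau_{2},j}$ itself, an $\mathcal{F}_n$-measurable quantity, even appears in the sum being analyzed); and in cases (i) and (iii) it is true only after one proves that these two $\mathcal{F}_n$-measurable contributions are $o_P(1)$, a step you assert but do not supply. To repair the argument you would either have to carry out that extra estimate case by case (which essentially reproduces the variance decomposition behind Lemma \ref{var-sum-zn}), or replace the conditioning device by the paper's route: establish the marginals, show $\bm{Cov}(T_{\tau_1,n},U_{n,1})\to 0$ by direct computation, and then conclude via Cram\'er--Wold together with Theorem \ref{ro^n to 1}, Lemma \ref{mathcal_N-p1} and Lemma \ref{CLT4maC1}.
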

We now turn to the alternate case described previously and accordingly use data from    $\tau_{2}^{\text{th}} $ to $n^{\text{th}}$  for estimating $m^*$ while data from $\tau_{1}^{\text{th}}$ to $\tau_{2}^{\text{th}}$ generations for estimating $m_A$. We use prime to denote the estimators in this case. That is,
\begin{align*}
\hat{m}_{\tau_1,\tau_2,n}^{\prime} &=\frac{1}{J_n} \sum_{j=1}^{J_n} \frac{1}{n-\tau_2} \sum_{l=\tau_2+1}^{n}\frac{Z_{l,j}}{Z_{l-1,j}} , \quad \hat{m}_{A,\tau_1,\tau_2,n}^{\prime} =    \frac{1}{J_n \hat{\mathcal{N}}_{\tau_1,\tau_2,n}^{\prime}} \sum_{j=1}^{J_n} \sum_{l=\tau_1}^{\tau_2} Z_{l,j} ,  
\end{align*}
and the corresponding estimate of the mean of $\sum_{j=1}^{J_n} \sum_{l=\tau_1}^{\tau_2} Z_{l,j}$ is
\begin{align*}
\hat{\mathcal{N}}_{\tau_1,\tau_2,n}^{\prime} &= \frac{ \hat{m}_{\tau_1,\tau_2,n}^{\prime \tau_1}( \hat{m}_{\tau_1,\tau_2,n}^{\prime \tau_2-\tau_1 +1} -1)     }{ \hat{m}_{\tau_1,\tau_2,n}^{\prime}-1   } .
\end{align*}
The next Theorem describes our result under this setting.
\begin{theorem}
\label{joint-CLT-2}
     Under \ref{AssumA1}-\ref{Assum-mo4+2delta}, if $\frac{n}{ \log J_n} \to 0 $, $\tau_{2,n}-\tau_{1,n} \coloneqq \delta_{m_A,n} $ and $n-\tau_{2,n}\coloneqq\delta_{m,n} $,  and $\frac{\delta_{m,n}}{n-\tau_{1,n}}\to p_{m} $,  $\frac{\delta_{m_A,n}}{n-\tau_{1,n}}\to p_{m_A} $, where $p_{m} $ and $p_{m_A} $ are constants, we have
\begin{align*}
\begin{pmatrix} 
\sqrt{(n-\tau_{1,n})J_n} & 0 \\
0& \frac{1}{\hat{r}^{\tau_{2,n}}} \sqrt{J_n}  
 \end{pmatrix}
      \begin{pmatrix}
    \hat{m}_{\tau_1,\tau_2,n}^{\prime} - m^* \\
  \hat{m}_{A,\tau_1,\tau_2,n}^{\prime}    -m_A
    \end{pmatrix}
    \xrightarrow[]{d}
    N \left( \begin{pmatrix} 
    0 \\ 0 \end{pmatrix} ,
     \begin{pmatrix}
     \frac{\sigma^{2*}}{p_{m} } & 0 \\
    0 & \sigma^2_{\tau'}
    \end{pmatrix}
    \right)
\end{align*}    
and 
\begin{align*}
    \sigma^2_{\tau'}=
    \begin{cases}
      \sigma_{I}^2 \quad &\text{if case (i) is true} \\
        \sigma_{F}^2(p_{m_A} \delta_{m}) \quad &\text{if case (ii) is true }  \\
        \sigma_{I}^2  \quad &\text{if case (iii) is true}
    \end{cases}.
\end{align*}
\end{theorem}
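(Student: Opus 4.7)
The plan is to parallel the proof of Theorem~\ref{joint-CLT}, with the two data windows swapped: now $[\tau_{2,n}+1,n]$ carries the estimation of $m^*$ while $[\tau_{1,n},\tau_{2,n}]$ carries the estimation of $m_A$, and $\hat r$ is built from $[\tau_{2,n}+1,n]$. The marginal limit for $\hat m'_{\tau_1,\tau_2,n}$ is immediate: this estimator has the form~(\ref{def-mo_taunj}) on a window of length $\delta_{m,n}$, so Theorem~\ref{consis-CLT-mo} gives $\sqrt{\delta_{m,n}J_n}(\hat m'_{\tau_1,\tau_2,n}-m^*)\xrightarrow[]{d} N(0,\sigma^{2*})$, and multiplication by $\sqrt{(n-\tau_{1,n})/\delta_{m,n}}\to 1/\sqrt{p_m}$ delivers the required variance $\sigma^{2*}/p_m$ in all three cases.

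For the ancestor part I would mimic the proof of Theorem~\ref{consis-CLT4mataunJ}. Writing $\bar W_n=(J_n\mathcal N'^*_{\tau_1,\tau_2,n})^{-1}\sum_{j=1}^{J_n}\sum_{l=\tau_1}^{\tau_2}Z_{l,j}$, which has mean $m_A$, decompose
\begin{align*}
\hat m'_{A,\tau_1,\tau_2,n}-m_A
\;=\; \frac{\mathcal N'^*_{\tau_1,\tau_2,n}}{\hat{\mathcal N}'_{\tau_1,\tau_2,n}}(\bar W_n-m_A)
\;+\; m_A\!\left(\frac{\mathcal N'^*_{\tau_1,\tau_2,n}}{\hat{\mathcal N}'_{\tau_1,\tau_2,n}}-1\right).
\end{align*}
For the leading term I apply a Lyapunov CLT across the $J_n$ i.i.d.\ replicates and invoke the window analogue of Lemma~\ref{var-sum-zn}: the variance of $r^{*(-\tau_{2,n})}(\mathcal N'^*_{\tau_1,\tau_2,n})^{-1}\sum_{l=\tau_1}^{\tau_2}Z_{l,j}$ converges to $\sigma_I^2$ under cases (i) and (iii) -- the dominant contribution comes from generations near $\tau_{2,n}$, so in case (i) the early-generation harmonic-moment effect that produces the $\exp(\cdot)$ factor in Theorem~\ref{consis-CLT4mataunJ} is \emph{absent} here because $\tau_{2,n}\to\infty$ -- and to $\sigma_F^2(p_{m_A}\delta_m)$ in case (ii). The correction term is handled exactly as in Theorem~\ref{consis-CLT4mataunJ}: the Taylor expansion $\mathcal N'^*/\hat{\mathcal N}'-1\approx-(\tau_{2,n}+1)(\hat m'-m^*)/m^*$, combined with $\hat m'-m^*=O_p(1/\sqrt{\delta_{m,n}J_n})$ from the first step, bounds the correction by $O_p(\tau_{2,n}/(r^{*\tau_{2,n}}\sqrt{\delta_{m,n}}))=o_p(1)$, since $r^*>1$ and $\tau_{2,n}\to\infty$ in every case. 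Switching the scaling from $r^{*\tau_{2,n}}$ to $\hat r^{\tau_{2,n}}$ uses the companion of Theorem~\ref{ro^n to 1}: because the $\hat r$-window starts at the diverging generation $\tau_{2,n}$, the ratio $(r^*/\hat r)^{\tau_{2,n}}\xrightarrow[]{p}1$ with no exponential correction, and Slutsky gives the marginal CLT with variance $\sigma^2_{\tau'}$.

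For the joint limit with diagonal covariance, I would condition on $\mathcal G_n:=\sigma(Z_{\tau_{2,n},j}:1\le j\le J_n)$; by the Markov property of the BPRE, the past and future segments are conditionally independent. Setting $B_n^{(1)}:=r^{*(-\tau_{2,n})}\sqrt{J_n}(\hat m'_{A,\tau_1,\tau_2,n}-m_A)$ makes $B_n^{(1)}$ measurable with respect to the past, while $A_n:=\sqrt{(n-\tau_{1,n})J_n}(\hat m'_{\tau_1,\tau_2,n}-m^*)$ and $(r^*/\hat r)^{\tau_{2,n}}$ are functions of the post-$\tau_{2,n}$ data. The conditional CLT for $A_n$ given $\mathcal G_n$ still converges to $N(0,\sigma^{2*}/p_m)$ because each ratio $Z_{l,j}/Z_{l-1,j}$ has conditional variance $\sigma^{2*}+\gamma^{2*}/Z_{l-1,j}\to\sigma^{2*}$ w.p.\ 1 under~\ref{AssumA1}; since this conditional limit is deterministic, $(A_n,B_n^{(1)})$ converges to the product measure, and Slutsky folds in $(r^*/\hat r)^{\tau_{2,n}}\xrightarrow[]{p}1$ to recover the full $B_n$. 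The main obstacle will be rigorising this joint step: one has to disentangle, inside the scaling factor $\hat r^{\tau_{2,n}}$, the post-$\tau_{2,n}$-measurable piece from the past-measurable $B_n^{(1)}$ so that the Markov decomposition at generation $\tau_{2,n}$ can be exploited; a secondary subtlety is verifying that the case-(i) exponential correction of Theorem~\ref{consis-CLT4mataunJ} truly disappears, which requires applying the companion of Theorem~\ref{ro^n to 1} to a window whose starting generation $\tau_{2,n}$ itself diverges, so that the relevant tail of the convergent series $\sum_{l\ge 0}\bm E[Z_l^{-1}]$ vanishes.
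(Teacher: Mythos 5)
Your proposal is correct in substance and, for the marginal limits, follows the same route as the paper: the paper's proof simply defines the primed analogues $\mathcal{N}^*_{\tau_1,\tau_2}$, $T'$, $S'=\mathcal{R}'_nU'_{n,1}+U'_{n,2}$ and repeats the argument of Theorem \ref{joint-CLT}, using exactly the ingredients you list (Theorem \ref{consis-CLT-mo} on the window of length $\delta_{m,n}$, the window analogue of Lemma \ref{var-sum-zn} with the window ending at the diverging generation $\tau_{2,n}$, the analogue of Lemma \ref{CLT4maC1} for the $\hat{\mathcal N}'$ correction, and the $\tau\to\infty$ case of Theorem \ref{ro^n to 1} to explain why the case (i) exponential factor disappears -- your diagnosis of that point matches Remark \ref{learning-benefit}). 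Where you genuinely diverge is the independence step. The paper stays inside the Cram\'er--Wold scheme of Theorem \ref{joint-CLT} and replaces Lemma \ref{cov(TU)} by an exact computation: conditioning on $\mathcal{F}_{\tau_2}$ gives $\bm{E}\bigl[\sum_{l=\tau_2+1}^n(Z_{l,1}/Z_{l-1,1}-m^*)\mid\mathcal{F}_{\tau_2}\bigr]=0$, so $\bm{Cov}(T',U'_{n,1})=0$ for every $n$, with no limiting argument needed. You instead propose conditional independence of the two data segments given generation $\tau_{2,n}$ and a conditional CLT with deterministic limit; this exploits the same structural split at $\tau_2$ but yields asymptotic independence more directly (mere asymptotic uncorrelatedness is, strictly speaking, weaker than what the Cram\'er--Wold step needs, so your route is if anything more robust), at the cost of the disentangling you flag. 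One small inaccuracy: $B_n^{(1)}=r^{*(-\tau_{2,n})}\sqrt{J_n}(\hat m'_{A}-m_A)$ is \emph{not} past-measurable, because $\hat{\mathcal N}'_{\tau_1,\tau_2,n}$ is built from post-$\tau_2$ ratios; but the obstacle you identify is resolved precisely by the paper's decomposition, in which $U'_{n,1}$ (centered sum of $\mathcal{N}^{*-1}_{\tau_1,\tau_2}\sum_{l=\tau_1}^{\tau_2}Z_{l,j}$) is $\mathcal{F}_{\tau_2}$-measurable while the post-$\tau_2$ quantities enter only through $\mathcal{R}'_n\xrightarrow{p}\mathrm{const}$ and $U'_{n,2}\xrightarrow{p}0$, after which Slutsky finishes the argument exactly as you describe.
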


\begin{remark}
\label{learning-benefit}
   We notice here that the limiting covariance matrix in both Theorem \ref{joint-CLT} and Theorem \ref{joint-CLT-2} is diagonal; that is, the limit distributions are asymptotically independent. Comparing the covariance matrices of Theorem \ref{joint-CLT} and \ref{joint-CLT-2}, the difference is in the values of $\sigma^2_{\tau}$ and $\sigma^2_{\tau'}$. From the proof, it will follow that when we use the later generations for estimating $m^*$ in Theorem \ref{joint-CLT-2}, the estimator behaves as if it is case (iii) in Theorem \ref{joint-CLT}. The reasoning behind this difference, in intuitive terms, is that the use of later generations in case (i) of Theorem \ref{joint-CLT-2} behaves like case (iii) of Theorem Theorem \ref{joint-CLT}. Turning to the rate, we notice that the scaling factor is different for estimating $m_A$. Due to the exponential growth of $Z_{l,j}$, the marginal growth rate behaves like $r^{*\tau_2}$.
\end{remark}

\subsection{Relative Quantitation for PCR}

As previously discussed, relative quantitation involves comparing the amounts of two sets of genetic materials: the calibrator and the target. We denote by $Z_{n,j,C}$ for amount of calibrator in the $n^{\text{th}}$ cycle and the $j^{\text{th}}$ replicate. The initial number is $ m_{A,C}$. Similarly, let $Z_{n,j,T}$  and $m_{A,T} $  denote the respective quantities for the target material. Our primary interest lies in estimating the ratio $R=\frac{m_{A,T}}{m_{A,C}}$. Since only the fluorescence intensities $F_{n, j, T}$ and $F_{n, j, C}$ are observed, we adopt the convention (as in \cite{Bret-Vidya2011} that $Z_{n, j, T}= c F_{n, j, T}$ where $c$ is a constant. We estimate $R$ using
\begin{align}
    \hat{R}_{n}=\frac{\hat{m}_{A,n,T}}{\hat{m}_{A,n,C}} \label{r_est},
\end{align}
where, $ \hat{m}_{A,n,T}$  and $\hat{m}_{A,n,C} $ is defined following (\ref{def-ma_taunj}). Our next result concerns the asymptotic behavior of the relative quantitation parameter, $\hat{R}_{n}$, under the BPRE framework. For notational simplicity, we denote $ \hat{m}_{A,n,T}$ and $\hat{m}_{A,n,C} $ by $\hat{m}_{A,T} $ and $\hat{m}_{A,C} $. Also, let $r_T^*$ and $r_C^*$ denote the CV of the target and the calibrator.
\begin{theorem}
\label{relative-theorem}
Under \ref{AssumA1}-\ref{Assum-mo4+2delta}, if $\frac{n}{\log J_n} \to 0 $,
\begin{align*}
 \frac{1}{r_{T}^{*n}+r_{C}^{*n}}  \sqrt{J_n} (\hat{R}_{n}-R) 
 \xrightarrow[]{d} \begin{cases}
     N\left( 0,  \frac{R^2 \sigma_{\tau_{T}}^2 }{m_{A,T}^2 }\right) \quad &\text{if } r_{T}^*>r_{C}^*\\
     N\left( 0,  \frac{R^2 \sigma_{\tau_{C}}^2 }{m_{A,C}^2 }\right) \quad &\text{if } r_{T}^*< r_{C}^*\\
      N \left( 0, \frac{R^2}{4}\left(\frac{\sigma_{\tau_{T}}^2 }{m_{A,T}^2}+\frac{\sigma_{\tau_{C}}^2 }{m_{A,C}^2 } \right) \right) \quad &\text{if }  r_{T}^*=r_{C}^*
 \end{cases} ,
\end{align*}
where $\sigma_{\tau_{T}}^2 $ and $\sigma_{\tau_{C}}^2 $ are defined in (\ref{def-sigma_tau2}).
\end{theorem}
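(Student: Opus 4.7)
The plan is to linearize the ratio $\hat{R}_n$ around $(m_{A,T},m_{A,C})$, reduce to the marginal central limit theorems established in Theorem \ref{consis-CLT4mataunJ}, and then split into cases according to which of $r_T^{*n}$ and $r_C^{*n}$ dominates the normalization. Target and calibrator are run in physically separate PCR reactions, so the two BPREs, and hence the two limiting Gaussians, are independent.

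First I would decompose
\begin{align*}
\hat{R}_n - R \;=\; \frac{1}{\hat{m}_{A,C}}(\hat{m}_{A,T}-m_{A,T}) \;-\; \frac{R}{\hat{m}_{A,C}}(\hat{m}_{A,C}-m_{A,C}).
\end{align*}
By the consistency statement of Theorem \ref{consis-CLT4mataunJ}, $\hat{m}_{A,C}\xrightarrow[]{p} m_{A,C}$, so $1/\hat{m}_{A,C}=1/m_{A,C}+o_p(1)$ and Slutsky's theorem lets me replace it. Multiplying by the announced scaling gives
\begin{align*}
\frac{\sqrt{J_n}(\hat{R}_n-R)}{r_T^{*n}+r_C^{*n}} \;=\; w_{T,n}\cdot \frac{1}{m_{A,C}}\cdot \frac{\sqrt{J_n}(\hat{m}_{A,T}-m_{A,T})}{r_T^{*n}} \;-\; w_{C,n}\cdot \frac{R}{m_{A,C}}\cdot \frac{\sqrt{J_n}(\hat{m}_{A,C}-m_{A,C})}{r_C^{*n}}+o_p(1),
\end{align*}
where $w_{T,n}=r_T^{*n}/(r_T^{*n}+r_C^{*n})\in[0,1]$ and $w_{C,n}=1-w_{T,n}$.

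Next, I would apply Theorem \ref{consis-CLT4mataunJ} separately to the target and calibrator processes. Because that theorem is phrased with $\hat{r}_n^n$ in the denominator, I would first invoke Theorem \ref{ro^n to 1}, which asserts that $(\hat{r}_n/r^*)^n$ converges in probability to a deterministic positive constant, to transfer the convergence to the $r^{*n}$-scaling that appears in the statement; this constant is absorbed into the limiting variances $\sigma^2_{\tau_T}$ and $\sigma^2_{\tau_C}$. Independence of the two reactions then yields that the pair $\bigl(\sqrt{J_n}(\hat{m}_{A,T}-m_{A,T})/r_T^{*n},\ \sqrt{J_n}(\hat{m}_{A,C}-m_{A,C})/r_C^{*n}\bigr)$ converges jointly to a pair of independent centered Gaussians with variances $\sigma^2_{\tau_T}$ and $\sigma^2_{\tau_C}$.

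The final step is a case analysis on $w_{T,n}$. If $r_T^*>r_C^*$, then $w_{T,n}\to 1$ and $w_{C,n}\to 0$, so only the target term survives and the limit is $N(0,\sigma^2_{\tau_T}/m_{A,C}^2)=N(0,R^2\sigma^2_{\tau_T}/m_{A,T}^2)$, using $R=m_{A,T}/m_{A,C}$. If $r_T^*<r_C^*$ the situation is symmetric. If $r_T^*=r_C^*$, both weights tend to $1/2$, and by independence the variances add, giving $(R^2/4)\bigl(\sigma^2_{\tau_T}/m_{A,T}^2+\sigma^2_{\tau_C}/m_{A,C}^2\bigr)$. The main obstacle is the reconciliation of the $\hat{r}^n$-scaling of Theorem \ref{consis-CLT4mataunJ} with the $r^{*n}$-scaling of the present statement: since $(\hat{r}/r^*)^n$ tends to a nontrivial constant rather than $1$, this factor must be tracked carefully through the limit variances. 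A secondary technical point is verifying that the $o_p(1)$ coming from the Slutsky replacement of $1/\hat{m}_{A,C}$ remains $o_p(1)$ after multiplication by the bounded weights $w_{T,n},w_{C,n}\le 1$, which is immediate.
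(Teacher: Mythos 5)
Your proposal is correct and follows essentially the same route as the paper's proof: the same algebraic decomposition of $\hat{R}_n-R$ into the two centered estimators weighted by $r_T^{*n}/(r_T^{*n}+r_C^{*n})$ and its complement, followed by Theorem \ref{consis-CLT4mataunJ} applied to each independent reaction, Slutsky's theorem, and the case analysis on which CV dominates. Your explicit tracking of the $\hat{r}^n$-versus-$r^{*n}$ scaling via Theorem \ref{ro^n to 1} is a point the paper leaves implicit, but it does not change the argument.
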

The details of the proof of this theorem are outlined in Appendix \ref{apd-C-relative-theorem}.
\begin{remark}
Here, we observe that the limiting variance is determined by  $\max(r_{T}^*,r_{C}^*) $. This indicates that the group with greater variability in the offspring distribution plays a dominant role in governing the overall variance.
    In GW process, since $r_{T}^*=r_{c}^*=1 $, and as detailed in  \ref{compareGW}, the limiting variance converges to $ N \left( 0, \frac{R^2}{4}\left(\frac{\sigma_{\tau_{T}}^2 }{m_{A,T}^2}+\frac{\sigma_{\tau_{C}}^2 }{m_{A,C}^2 } \right) \right) $, where $\sigma_{\tau_{T}}^2 $ and $\sigma_{\tau_{C}}^2 $ has the same form in Remark \ref{compareGW}. This expression for the limiting variance also aligns with the result of Theorem 2 in \cite{Bret-Vidya2011}. 
\end{remark}

\subsection{Dynamics of COVID-19 growth}
Turning to the dynamics of COVID-19 evolution, as described in the introduction, one can estimate the mean initial number of potentially infected subjects in communities of similar sizes. In this case, setting $Z_{n, j_C,C}$ and $Z_{n, j_T, T}$ as the number of infected patients in county $j_C$ from state $C$ and county $j_T $ from state $T$ during week $n$ and $Z_{n, j_T, T}$,
one can estimate the mean initial number of infected subjects that potentially initiated the disease. 
Using Theorem \ref{relative-theorem}, one can obtain the standard error of the estimate and confidence intervals for the ratio and compare the policies for stopping the spread work.

\section{Probability Bounds and Convergence Rates}
\label{sec-other}
In this section, we describe several required results in the proofs of the main Theorems described in Section \ref{sec-mr} above and are of independent interest.
Throughout the rest of the paper, we will use the notation $\mathcal{F}_{n,j} $ to denote the  $\sigma$-field generated by $(\zeta_{0,j},\zeta_{1,j},\cdots,\zeta_{n-1,j}, Z_{0,j},\cdots,Z_{n,j}) $. Our first result concerns the harmonic moments of generation sizes of the BPRE.  In the following, we will always assume $\tau_n $ satisfies one of the cases in sampling scheme \ref{assum-tau_n}.
\begin{lemma}
\label{sum1/zn} 
Under \ref{AssumA1} and \ref{AssumA2} there exist positive constants $0 < c_H < \infty $ and $0<a<1$ such that,
  \begin{align*}
      \bm{E}\left[  \frac{1}{Z_n} \right] \le c_H a^n \text{ for all } n \ge 1 \text{ and } \; \sum_{l=0}^\infty \left[ \bm{E}  \left( \frac{1}{Z_l} \right) \right]^p  < \infty \text{ for all }p>0.
  \end{align*}
Specifically, when $p=1 $ recalling $\lambda_n \coloneqq \sum_{l=0}^{n-1} \bm{E}[Z_n^{-1}]$,   $  \lim_{n \to \infty }  \lambda_n < \lambda< \infty $.
\end{lemma}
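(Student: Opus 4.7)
The plan is to condition on the environment sequence $\zeta=(\zeta_0,\zeta_1,\ldots)$, use the arithmetic-harmonic-mean (AM-HM) inequality to factor the conditional harmonic moment of $Z_n$ across generations, and then average over the i.i.d.\ environment. The workhorse is the elementary inequality $(\sum_{i=1}^k x_i)^{-1}\le k^{-2}\sum_{i=1}^k x_i^{-1}$ for positive reals.

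First, reduce to a single ancestor. All ancestors within a replicate share the same environment, so writing $Z_n=\sum_{i=1}^{Z_0}Z_n^{(i)}$ the subtrees $Z_n^{(i)}$ are conditionally i.i.d.\ given $(Z_0,\zeta)$, and AM-HM gives $\bm{E}[1/Z_n\mid Z_0,\zeta]\le H_n(\zeta)/Z_0$, where $H_n(\zeta):=\bm{E}[1/Z_n\mid Z_0=1,\zeta]$. Second, bound $H_n$ by induction on $n$: conditional on $(Z_1,\zeta)$ the process $Z_n$ is a sum of $Z_1$ conditionally i.i.d.\ subtrees driven by the shifted environment $\zeta^{(1)}=(\zeta_1,\zeta_2,\ldots)$, so AM-HM yields $\bm{E}[1/Z_n\mid Z_1,\zeta]\le H_{n-1}(\zeta^{(1)})/Z_1$, and averaging over $Z_1$ (whose conditional law depends only on $\zeta_0$) gives $H_n(\zeta)\le \bm{E}[1/\xi\mid\zeta_0]\,H_{n-1}(\zeta^{(1)})$. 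Iterating with $H_0\equiv 1$ produces
\begin{equation*}
H_n(\zeta)\le \prod_{k=0}^{n-1}\bm{E}[1/\xi\mid\zeta_k].
\end{equation*}
Taking the unconditional expectation and using the independence of the $\zeta_k$'s, together with the single-ancestor reduction above, yields $\bm{E}[1/Z_n]\le c_H\,a^n$ where $a:=\bm{E}[1/\xi]$ and $c_H:=\bm{E}[1/Z_0]\le 1$.

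The final ingredient is $a<1$. Under \ref{AssumA1} the offspring satisfy $\xi\ge 1$ a.s., so $\bm{E}[1/\xi\mid\zeta]\le p(1|\zeta)+\tfrac12(1-p(1|\zeta))=\tfrac12(1+p(1|\zeta))$; \ref{AssumA2} forces $p(1|\zeta)<1$ almost surely, whence $a\le\tfrac12(1+\bm{E}[p(1|\zeta)])<1$. The summability $\sum_{l\ge 0}(\bm{E}[1/Z_l])^p\le c_H^p\sum_l a^{pl}<\infty$ for every $p>0$ and the finite limit $\lambda\le c_H/(1-a)$ then follow at once. The main (and only) obstacle is identifying AM-HM as the right convexity tool to telescope the harmonic moment across generations; once this is in place the i.i.d.\ environment makes the remaining steps routine, and \ref{AssumA1}-\ref{AssumA2} supply the per-generation contraction $a<1$.
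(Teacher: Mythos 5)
Your proof is correct, but it takes a genuinely different route from the paper. The paper reduces to a single ancestor, invokes the harmonic-moment asymptotics of Grama, Liu and Miqueu (Theorem 2.1 of \cite{Grama-Liu2017}) to get $\bm{E}[Z_n^{-1}\mid Z_0=1]\sim c_1 A_{1,n}$ with $A_{1,n}$ determined by whether the root $r_1$ of $\bm{E}[m_{0,1}^{-r_1}]=\gamma_1$ is below, equal to, or above $1$, and then handles random $Z_0$ via the monotonicity $\bm{E}[Z_n^{-1}\mid Z_0=k]\le\bm{E}[Z_n^{-1}\mid Z_0=1]$. You instead condition on the environment and telescope the conditional harmonic moment with the AM--HM inequality, getting $H_n(\zeta)\le\prod_{k=0}^{n-1}\bm{E}[1/\xi\mid\zeta_k]$ and hence the explicit geometric bound with $a=\bm{E}[1/\xi]$, which \ref{AssumA1}--\ref{AssumA2} force to be strictly less than $1$ via $\bm{E}[1/\xi\mid\zeta]\le\tfrac12(1+p(1|\zeta))$; the random $Z_0$ is absorbed through the extra $1/Z_0$ factor (or simply $Z_n\ge Z_n^{(1)}$). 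What each buys: your argument is self-contained and elementary, needing nothing beyond \ref{AssumA1}--\ref{AssumA2} and the i.i.d.\ structure of the environment, whereas the paper's citation delivers the sharper decay rate (by conditional Jensen, $\bm{E}[1/\xi]\ge\bm{E}[m_{0,1}^{-1}]$, so your $a$ is cruder) — but since the lemma only requires \emph{some} $a\in(0,1)$, your bound is fully sufficient for everything the paper uses it for (the summability of all powers and the finiteness of $\lambda$).
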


\begin{proof} For  $Z_0=1 $, let $P(Z_{1}=1|Z_{0}=1)=\gamma_1$, and we let $r_1 $ be the solution (which exists under \ref{AssumA2}) of the equation  $\bm{E}[ m_{0,1}^{-r_1}]=\gamma_1$ . Let
\begin{align*}
    A_{1,n}=
    \begin{cases}
        \gamma_1^n \quad &\text{if } r_1<1 \\
        n \gamma_1^n \quad &\text{if } r_1=1\\
        [\bm{E}(m_{0,1}^{-1})]^n\quad &\text{if } r_1>1
    \end{cases}.
\end{align*}
Then by Theorem 2.1 in \cite{Grama-Liu2017}, we have 
\begin{align*}
   \lim_{n \to \infty} \frac{\bm{E}(Z_n^{-1}|Z_0=1)}{ A_{1,n}} \to c_{1}< \infty.
\end{align*}
Because both $\gamma_1<1 $ and $\bm{E}(m_{0,1}^{-1})<1 $, we can always find a constant $0< a < 1$, such that $  \frac{A_{1,n} }{ a^n } \to 0$, and
\begin{align*}
      \lim_{n \to \infty} \frac{\bm{E}(Z_n^{-1}|Z_0=1)}{ a^n} \to 0.
\end{align*}
This implies that there exists a constant $c_H $ such that, 
 \begin{align*}
     \bm{E} \left[  \frac{1}{Z_n}  \middle|Z_0=1 \right] \le  c_H a^n , \;\;\text{for all $n$}.
  \end{align*}
Meanwhile, since $Z_n= \sum_{l=1}^{Z_0} Z_{n,l}^{\prime}  $, where each $Z_{n,l}^{\prime} $ is a BPRE starting with a single ancestor, it follows that for any $k>1$, $\bm{E}[Z_n^{-1}|Z_0=k ] \le \bm{E}[Z_n^{-1} | Z_0=1] $, implying   
 \begin{align*}
     \left[ \bm{E} \left(  \frac{1}{Z_n} \right) \right]^{p} &=    \left[ \sum_{k=1}^\infty \bm{E} \left[  \frac{1}{Z_n} \middle|Z_0=k  \right] \bm{P}(Z_0=k) \right]^{p} \\ 
     &\le \left[ \sum_{k=1}^\infty \bm{E} \left[  \frac{1}{Z_n}  \middle|Z_0=1 \right] \bm{P}(Z_0=k) \right]^{p} =\left[ \bm{E} \left[  \frac{1}{Z_n}  \middle|Z_0=1 \right] \right]^{p} \le  ( c_H a^n)^{p} , \;\; \text{for all $n$}.
  \end{align*}
 let $p=1$, we have $ \bm{E}\left[  Z_n^{-1} \right] \le c_H a^n $ for all $n$. And for any $p>0$, 
  \begin{align*}
       \sum_{l=0}^\infty \left[ \bm{E}  \left( \frac{1}{Z_l} \right) \right]^{p} \le   \sum_{l=0}^\infty( c_H a^n)^{p}   < \infty .
  \end{align*}
\end{proof}

\noindent
Let $Y_{l,j}\coloneqq \frac{Z_{l+1,j}}{Z_{l,j}}-m_{l,j} $, the next theorem tells that the variance of the infinite sum converges:
\begin{lemma}
\label{varsumYl-finite}
Assume that the conditions of Lemma \ref{sum1/zn} 
 hold. Then as $n \to \infty $, the following holds:
\begin{align*}
   & \limsup_{n \to \infty} \bm{Var} \left[   \sum_{l=\tau_n+1}^{n}   Y_{l,j} \right]  \le c_\tau, \quad  \text{if $\tau_n \equiv \tau$ is fixed (case (i) in sampling scheme \ref{assum-tau_n}), and}\\
   & \lim_{n \to \infty}  \bm{Var} \left[   \sum_{l=\tau_n+1}^{n}   Y_{l,j} \right] =0,  \quad  \text{if $\tau_n \to \infty$  (case (ii) and (iii) in sampling scheme \ref{assum-tau_n})}.
\end{align*}
Furthermore,  $0 < c_\tau \le  \frac{\gamma^{*2} c_H a^{\tau}  }{1-a} $.
\end{lemma}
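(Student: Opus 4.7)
The plan is to recognize that $\{Y_{l,j}\}_{l\ge 0}$ is a martingale-difference-type sequence (orthogonal increments) with respect to the filtration $\{\mathcal{F}_{l,j}\}$, so that the variance of the sum equals the sum of the variances, after which Lemma \ref{sum1/zn} closes the argument.

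\textbf{Step 1: Orthogonality.} First I would verify that $\bm{E}[Y_{l,j}\mid \mathcal{F}_{l,j}]=0$. Conditioning further on $\zeta_{l,j}$, the random variable $Z_{l+1,j}/Z_{l,j}$ is the sample mean of $Z_{l,j}$ i.i.d. offspring with mean $m_{l,j}$, so $\bm{E}[Z_{l+1,j}/Z_{l,j}\mid \mathcal{F}_{l,j},\zeta_{l,j}]=m_{l,j}$. Integrating out $\zeta_{l,j}$ (which is independent of $\mathcal{F}_{l,j}$) gives $\bm{E}[Z_{l+1,j}/Z_{l,j}\mid \mathcal{F}_{l,j}]=m^{*}$, and similarly $\bm{E}[m_{l,j}\mid \mathcal{F}_{l,j}]=m^{*}$, so $\bm{E}[Y_{l,j}\mid \mathcal{F}_{l,j}]=0$. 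Since $Y_{l_1,j}$ is $\mathcal{F}_{l_1+1,j}$-measurable and $\mathcal{F}_{l_1+1,j}\subseteq\mathcal{F}_{l_2,j}$ when $l_1<l_2$, the tower property yields $\bm{E}[Y_{l_1,j}Y_{l_2,j}]=0$.

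\textbf{Step 2: Variance of a single term.} Conditioning on $(\mathcal{F}_{l,j},\zeta_{l,j})$ and using the formula for the variance of a sample mean, $\bm{Var}[Z_{l+1,j}/Z_{l,j}\mid \mathcal{F}_{l,j},\zeta_{l,j}]=\sigma^{2}_{l,j}/Z_{l,j}$. Because $Y_{l,j}$ has conditional mean $0$ under this conditioning, $\bm{E}[Y_{l,j}^2\mid \mathcal{F}_{l,j},\zeta_{l,j}]=\sigma^{2}_{l,j}/Z_{l,j}$. Now using the independence of $\zeta_{l,j}$ from $\mathcal{F}_{l,j}$ (hence from $Z_{l,j}$),
\begin{equation*}
\bm{Var}[Y_{l,j}]=\bm{E}[Y_{l,j}^2]=\bm{E}[\sigma^{2}_{l,j}]\,\bm{E}\!\left[\tfrac{1}{Z_{l,j}}\right]=\gamma^{2*}\,\bm{E}\!\left[\tfrac{1}{Z_{l,j}}\right].
\end{equation*}

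\textbf{Step 3: Sum and invoke Lemma \ref{sum1/zn}.} Combining Steps 1 and 2,
\begin{equation*}
\bm{Var}\!\left[\sum_{l=\tau_n+1}^{n}Y_{l,j}\right]=\gamma^{2*}\sum_{l=\tau_n+1}^{n}\bm{E}\!\left[\tfrac{1}{Z_{l,j}}\right]\le \gamma^{2*}c_H\sum_{l=\tau_n+1}^{n}a^{l}\le \frac{\gamma^{2*}c_H\,a^{\tau_n}}{1-a},
\end{equation*}
by the geometric bound supplied by Lemma \ref{sum1/zn}. Under case (i), $\tau_n\equiv\tau$ yields the stated constant $c_{\tau}\le \gamma^{2*}c_H a^{\tau}/(1-a)$. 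Under cases (ii)--(iii), $\tau_n\to\infty$ forces $a^{\tau_n}\to 0$, so the variance vanishes.

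There is no serious obstacle; the only delicate point is keeping straight which variables each $\sigma$-field captures. In particular the argument for orthogonality requires conditioning on $\mathcal{F}_{l_2,j}$ (which contains $\zeta_{0,j},\ldots,\zeta_{l_2-1,j}$ but not $\zeta_{l_2,j}$) and then separately integrating out $\zeta_{l_2,j}$ via its independence from $\mathcal{F}_{l_2,j}$; the same care is needed in Step 2 to factor $\bm{E}[\sigma^{2}_{l,j}/Z_{l,j}]$ as $\gamma^{2*}\bm{E}[1/Z_{l,j}]$.
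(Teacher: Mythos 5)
Your proof is correct and rests on the same underlying fact as the paper's: the $Y_{l,j}$ form martingale differences with respect to $\{\mathcal{F}_{l,j}\}$, so $\bm{Var}\bigl[\sum_{l=\tau_n+1}^{n}Y_{l,j}\bigr]=\gamma^{2*}\sum_{l=\tau_n+1}^{n}\bm{E}[Z_{l}^{-1}]$, after which Lemma \ref{sum1/zn} gives the geometric bound and the two cases. The only difference is presentational — you establish the vanishing cross-covariances directly via the tower property, whereas the paper iterates a conditional-variance (law of total variance) decomposition, peeling off the last term; both yield the identical identity and conclusion.
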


\begin{proof}
Let the generic element of $\{\frac{ Z_{l+1,j}  } {Z_{l,j}} \}$, $\{\xi_{l,n,j}  \}$, $\{ \sigma^2_{n,j} \} $ and $\{ m_{n,j} \}$ be denoted by $\frac{ Z_{l+1}  } {Z_{l}}$, $\xi_{l,n} $, $\sigma^2_{n}$ and $m_{n}$. Let $Y_{l,j} =     \frac{ Z_{l+1,j}  } {Z_{l,j}} - m_{l,j}  $ and denote its generic element by $Y_l$. Using the conditional independence of $\xi_{l,n} $, we have for any $\tau_n$,
\begin{align*}
\bm{Var}\left[    \sum_{l=\tau_n+1}^n   Y_l  \right]  
    &=  \bm{Var}\left[   \sum_{l=\tau_n+1}^{n-1}   Y_l  +  \bm{E}   [ Y_n | \mathcal{F}_n]  \right]  +   \bm{E}\left[   \frac{1}{Z_n^2} \bm{Var}  [ Z_{n+1} |  \mathcal{F}_n  ] \right]  \nonumber \\
    &=  \bm{Var} \left[   \sum_{l=\tau_n+1}^{n-1}   Y_l \right] + \bm{E}\left[   \frac{1}{Z_n}   \sigma^2_{n} \right] 
    = \bm{Var}\left[    \sum_{l=\tau_n+1}^{n-1}   Y_l  \right] + \gamma^{*2}\bm{E}\left[  \frac{1}{Z_n} \right],
\end{align*}
where the penultimate equality follows by conditioning on $\mathcal{F}_{(n-1)}$. By iteration and using Lemma \ref{sum1/zn}, for some $0<a<1$, it follows that
    \begin{align}
         \bm{Var} \left[   \sum_{l=\tau_n+1}^{n}   Y_{l,j} \right] &=           \gamma^{*2}  \sum_{l=\tau_n+1}^n\bm{E} \left[  \frac{1}{Z_l} \right] 
         \le c_H \gamma^{*2} \sum_{l=\tau_n+1}^n a^l. \label{varsumYl-finite-p1}
    \end{align}
Now, when $\tau_n=\tau$, then by Lemma \ref{sum1/zn} it follows that
\begin{align*}
\bm{Var} \left[\sum_{l=\tau+1}^{\infty} Y_{l,j} \right]=c_H \gamma^{*2}  \sum_{l\ge \tau+1}\bm{E} \left[  \frac{1}{Z_l} \right] \coloneqq c_{\tau}.
\end{align*}
Next, when $\tau_n \nearrow \infty$, 
the last term on the right-hand side (RHS) of (\ref{varsumYl-finite-p1}) converges to 0, being the tail of a convergent sum. Thus, in all cases of the sampling scheme \ref{assum-tau_n} the variance convergence follows. This completes the proof of the Lemma. 
\end{proof}

The next theorem is a key result examining the asymptotic behavior of the ratio $(\hat{m}_{\tau_n,n,J_n}  / {m^*} )^n $. We demonstrate that this ratio converges to 1 exponentially fast, ensuring convergence when raised to the $n^\text{th}$ power. The proof of this theorem requires that $J_n \nearrow \infty$ such that  $\frac{n^2}{J_n(n-\tau_n)}  \to 0$. 
When there is no scope for confusion,
we will suppress $\tau_n$ and $J_n $in the subscripts; for example, $\hat{m}_{\tau_n,n,J_n} $ will be simplified to $ \hat{m}_{n}$:

\begin{theorem}
\label{mo^n to 1}
Let \ref{AssumA1}-\ref{Assum-gamma} hold. Also, assume that for $\tau_n $ in all the cases in sampling scheme \ref{assum-tau_n} that $ \frac{n^2}{J_n(n-\tau_n)} \to 0$ and that $m_2^*=\bm{E}[m_{0,1} ]^2 < \infty $. 
Then, as $n \to \infty$
     \begin{align*}
    \left(\frac{\hat{m}_{n} } {m^*} \right)^n  \xrightarrow[]{p} 1.
\end{align*}   
\end{theorem}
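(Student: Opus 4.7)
The plan is to reduce the statement to showing that $n(\hat{m}_n - m^*)$ tends to zero in probability, after which a Taylor expansion of the logarithm closes the argument. Since $(\hat{m}_n/m^*)^n = \exp\{n\log(\hat{m}_n/m^*)\}$, it suffices to prove $n\log(\hat{m}_n/m^*) \xrightarrow{p} 0$. On the event $\{|\hat{m}_n - m^*| < m^*/2\}$ (which will have probability tending to $1$ as a byproduct of the argument), I would expand $\log(1+x) = x + O(x^2)$ for $|x|\le 1/2$ to obtain
\begin{align*}
n \log(\hat{m}_n/m^*) = \frac{n(\hat{m}_n-m^*)}{m^*} + O\!\left(n(\hat{m}_n-m^*)^2\right).
\end{align*}
Once $n(\hat{m}_n - m^*) \xrightarrow{p} 0$ is known, we have $\hat{m}_n - m^* = o_P(n^{-1})$, so the quadratic remainder is $o_P(n^{-1})$ and hence vanishes, while consistency $\hat{m}_n \xrightarrow{p} m^*$ is automatic and validates the conditioning.

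Next, I would decompose the estimation error into a within-replicate and a between-replicate piece. Writing $Y_{l,j} = Z_{l+1,j}/Z_{l,j} - m_{l,j}$ and re-indexing $l \mapsto l-1$,
\begin{align*}
\hat{m}_n - m^* = \underbrace{\frac{1}{J_n(n-\tau_n)}\sum_{j=1}^{J_n}\sum_{l=\tau_n}^{n-1} Y_{l,j}}_{T_1} \;+\; \underbrace{\frac{1}{J_n(n-\tau_n)}\sum_{j=1}^{J_n}\sum_{l=\tau_n}^{n-1} (m_{l,j}-m^*)}_{T_2}.
\end{align*}
The term $T_2$ is a normalized sum of i.i.d.\ mean-zero random variables (across both $l$ and $j$, by the i.i.d.\ structure of $\{\zeta_{l,j}\}$), so $\bm{Var}(T_2) = \sigma^{2*}/[J_n(n-\tau_n)]$ and $\bm{Var}(nT_2) = n^2\sigma^{2*}/[J_n(n-\tau_n)] \to 0$ directly from the assumed rate. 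For $T_1$, independence across replicates together with Lemma \ref{varsumYl-finite} gives
\begin{align*}
\bm{Var}(T_1) \;=\; \frac{1}{J_n(n-\tau_n)^2}\,\bm{Var}\!\left(\sum_{l=\tau_n}^{n-1} Y_{l,j}\right) \;\le\; \frac{C}{J_n(n-\tau_n)^2}.
\end{align*}
A short case analysis of sampling scheme \ref{assum-tau_n} then shows $\bm{Var}(nT_1)\to 0$ in each regime: under case (i), $n-\tau_n \sim n$ makes the bound $\lesssim 1/J_n$; under case (ii), $n-\tau_n = \delta_m$ is constant so $n^2/[J_n(n-\tau_n)^2] = n^2/[J_n\delta_m^2] \to 0$ from the hypothesis $n^2/[J_n(n-\tau_n)] \to 0$; under case (iii), $n-\tau_n\to\infty$ so $n^2/[J_n(n-\tau_n)^2] \le n^2/[J_n(n-\tau_n)] \to 0$. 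Chebyshev's inequality then yields $nT_1, nT_2 \xrightarrow{p} 0$ and hence $n(\hat{m}_n - m^*) \xrightarrow{p} 0$.

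The main obstacle I expect is exactly this variance control for $T_1$: the $Y_{l,j}$'s form a martingale-difference sequence in $l$ whose conditional variance is $\sigma_l^2/Z_l$, and controlling the accumulated variance relies on the harmonic-moment bound $\bm{E}[Z_l^{-1}] \le c_H a^l$ from Lemma \ref{sum1/zn}, packaged into Lemma \ref{varsumYl-finite}. The delicate point worth emphasizing is that one needs the $o(1/n)$ rate on $\hat{m}_n - m^*$, not merely the $O_P(1/\sqrt{J_n(n-\tau_n)})$ rate of the CLT, in order to survive exponentiation to the $n$-th power, and the hypothesis $n^2/[J_n(n-\tau_n)]\to 0$ is precisely the minimal scaling that delivers it. With that rate secured, substitution into the logarithmic expansion above completes the proof.
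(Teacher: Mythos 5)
Your proposal is correct and follows essentially the same route as the paper's proof: the same decomposition of $\hat{m}_n-m^*$ into the martingale-type term $S_{1,n}$ (your $T_1$) and the i.i.d.\ environment term $S_{2,n}$ (your $T_2$), the same Chebyshev bounds at scale $\epsilon/n$ driven by Lemma \ref{varsumYl-finite} and the finiteness of $\sigma^{2*}$, and the same use of the rate condition $n^2/[J_n(n-\tau_n)]\to 0$. The only difference is cosmetic: you establish $n(\hat{m}_n-m^*)\xrightarrow{p}0$ first and then Taylor-expand $\log(1+x)$, whereas the paper bounds $\bm{P}[|n\log(\hat{m}_n/m^*)|>\epsilon]$ directly via the elementary inequality $e^x-1>x$.
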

\begin{proof}
We will show that $ n\log \frac{ \hat{m}_{n} } {m^*}  \xrightarrow[]{p} 0 $. Since
\begin{align}
    \bm{P} \left[ \left|  n\log \frac{ \hat{m}_{n} } {m^*}  \right|> \epsilon  \right] 
     =  \bm{P} \left[   \hat{m}_{n}   > m^* \exp \left( \frac{  \epsilon}{n}\right)  \right]  + \bm{P} \left[   \hat{m}_{n}   < m^* \exp \left(- \frac{  \epsilon}{n}\right)  \right] \label{mo^n2parts} ,
\end{align}
it is sufficient to show (\ref{mo^n2parts}) converges to 0. To this end, recall that 
\begin{align*}
  \hat{m}_{n}-m^* 
  &= \frac{1}{J_n} \sum_{ j=1 }^{J_n} \left[   \frac{1}{n-\tau_n}  \sum_{l=\tau_n}^{n-1}     \left( \frac{ Z_{l+1,j}  } {Z_{l,j}}   -m^* \right)  \right] ,
\end{align*} 
by adding and subtracting $m_{l,j} $, we get $  \hat{m}_{n}-m^* = S_{1,n}+S_{2,n}$, where

\begin{align*}
    S_{1,n}=  \frac{1}{J_n}\sum_{ j=1 }^{J_n}  \frac{1}{n-\tau_n}  \sum_{l=\tau_n}^{n-1}     \left( \frac{ Z_{l+1,j}  } {Z_{l,j}}  - m_{l,j} \right) ~~ \text{and}~~ 
    S_{2,n}=   \frac{1}{J_n} \sum_{ j=1 }^{J_n}       \frac{1}{n-\tau_n}  \sum_{l=\tau_n}^{n-1}  \left(   m_{l,j} -m^*  \right) .
\end{align*}
Hence, using $ e^x-1>x$ for $x>0$,
\begin{align}
\bm{P} \left[   \hat{m}_{n}   > m^* \exp \left( \frac{  \epsilon}{n}\right)  \right]  &= 
     \bm{P} \left\{ S_{1,n}+ S_{2,n} > m^*   \left[ \exp \left( \frac{  \epsilon}{n}\right) -1 \right] \right\} \nonumber \\
    & \le       \bm{P} \left[ S_{1,n} \ge \frac{m^*   \epsilon}{2n} \right] + \bm{P} \left[  S_{2,n} >   \frac{m^*   \epsilon}{2n}  \right]  \label{S1S2}.
\end{align}
Consider the first term in (\ref{S1S2}). Using $\bm{E}[Y_{l,j} ]=0 $, Chebyshev's inequality, and Lemma \ref{varsumYl-finite}, it follows that, as $n \to \infty$,
\begin{align}
      \bm{P} \left[ S_{1,n} \ge \frac{m^*   \epsilon}{2n}  \right] 
    & \le \frac{n^2}{J_n (n-\tau_n)^2 (m^*\epsilon)^2 }  \bm{Var} \left[  \sum_{l=\tau_n}^{n-1}     Y_{l,j}\right]   
     \to 0,  \label{S1nJ-conv} 
\end{align}
where the convergence to 0 follows from the condition $ \frac{n^2}{J_n(n-\tau_n)} \to 0$. For the second term, again using Chebyshev's inequality  and the finiteness of $\sigma^{2*}$, it follows that as $n \to \infty$,
\begin{align}
   \bm{P} \left[  S_{2,n} >   \frac{m^*\epsilon}{2n} \right]   
  \le \frac{n^2 }{J_n(n-\tau_n) } \frac{ 4\sigma^{2*}}{(m^*\epsilon)^2} \to 0 \label{S2nJ-conv}.
\end{align}   
Now by (\ref{S1S2}), (\ref{S1nJ-conv}), and (\ref{S2nJ-conv}), the first term on the right-hand side (RHS) of (\ref{mo^n2parts}) converges to zero. A similar calculation also shows that the second term on (\ref{mo^n2parts}) converges to zero. This completes the proof of the Theorem.

\end{proof}

\section{Proofs of Main Results}
\label{sec-pf}

In this section, we provide the proofs of the theorems stated in Section \ref{sec-mr}.
To simplify the notations and when there is no scope for confusion, we will denote $\hat{m}_{\tau_n,n,J_n} $, $\hat{m}_{2,\tau_n,n,J_n}  $, and $\hat{r}_{\tau_n,n,J_n} $ as $\hat{m}_{n} $, $\hat{m}_{2,n}  $ and $\hat{r}_{n} $ respectively. Meanwhile, because $\tilde{m}_{n} $, $\tilde{m}_{2,n}  $ and $\tilde{r}_{n} $ in Theorem \ref{consis-CLT4ma} is a special condition of case (ii) in sampling scheme \ref{assum-tau_n}, all results applicable to $\hat{m}_{n} $, $\hat{m}_{2,n}  $ and $\hat{r}_{n} $ valid for these estimators. Since the proof of Theorem \ref{consis-CLT4mataunJ} requires the asymptotic property of $\hat{m}_{n}$, we begin with the proof of Theorem \ref{consis-CLT-mo}.
We begin by recalling that $Y_{l,j}=Z_{l,j}^{-1}Z_{l+1,j}-m_{l,j}$ and
\begin{align}
    \hat{m}_n - m^*&=\frac{1}{J_n} \sum_{ j=1 }^{J_n} \left(  \frac{1}{n-\tau_n}  \sum_{l=\tau_n}^{n-1}   Y_{l,j} \right) + \frac{1}{J_n} \sum_{ j=1 }^{J_n} \left[  \frac{1}{n-\tau_n}  \sum_{l=\tau_n}^{n-1}  \left( m_{l,j} - m^* \right) \right] \nonumber \\
    &= S_{1,n}+S_{2,n}.        \label{prf-consis-mo-1}
\end{align}
\subsection{Proof of Theorem  \ref{consis-CLT-mo} }

\begin{proof}
To prove the Theorem, first, we will show that $S_{1,n}$ and $S_{2,n}$ converge to zero in probability under our assumptions. To this end,
by Chebyshev's inequality

\begin{align}
      \bm{P} ( |S_{1,n} | > \epsilon ) \le  \frac{\bm{Var}  [  S_{1,n} ] }{\epsilon^2} =  \frac{1}{J_n(n-\tau_n)^2\epsilon^2}    \bm{Var} \left(  \sum_{l=\tau_n}^{n-1}   Y_{l,j} \right)     \to 0 , \label{consis-mo-p1}
\end{align}
where the convergence follows from $J_n(n-\tau_n)^2 \to \infty $ and Lemma \ref{varsumYl-finite}, yielding $S_{1,n} \xrightarrow[]{p} 0$.
Next, since $\{ m_{l,j}; l=0,1,2,\cdots,n; j=1,2,\cdots J_n\}$ is $(n+1)J_n$'s are i.i.d random variable, for any $\epsilon>0$,
\begin{align}
     \bm{P} ( |S_{2,n} | > \epsilon ) \le \frac{\bm{Var}(S_{2,n})}{\epsilon^2} = \frac{\bm{Var}[m_{0,1}]}{J_n(n-\tau_n)\epsilon^2}=\frac{\sigma^{2*}}{J_n(n-\tau_n)\epsilon^2} \to 0 , \label{consis-mo-p2}
\end{align}
yielding $S_{2,n} \xrightarrow[]{p} 0 $. This completes the proof of weak consistency.

\noindent
Now, we prove strong consistency under the condition $J_n(n-\tau_n)=O(n^{\alpha}) $ for $\alpha>1$. As $N \to \infty$, by using the inequality in (\ref{consis-mo-p1}) and Lemma \ref{varsumYl-finite}, we have
\begin{align*}
    \sum_{n=1}^{N} \bm{P} ( |S_{1,n} | > \epsilon ) &\le  \sum_{n=1}^{N}  \frac{\gamma^{*2} c_H }{J_n(n-\tau_n)^2(1-a)\epsilon^2}
    \le \frac{  \gamma^{*2} c_H c_\alpha }{ \epsilon^2 (1-a)}  \sum_{n=1}^N n^{-\alpha} < \infty
\end{align*}
for some constant $c_\alpha $ since $J_n(n-\tau_n)=O(n^{\alpha}) $ for $\alpha>1$. By Borel-Cantelli lemma, this yields $S_{1,n} \xrightarrow[]{a.s.} 0$. 
Also, by using the inequality in (\ref{consis-mo-p2}) and $J_n(n-\tau_n)=O(n^{\alpha}) $ for $\alpha>1$,
\begin{align*}
     \sum_{n=1}^{N} \bm{P} ( |S_{2,n} | > \epsilon ) &\le \sum_{n=1}^{N} \frac{\sigma^{2*}}{J_n(n-\tau_n)\epsilon^2} <  \frac{\sigma^{2*} c_\alpha}{\epsilon^2} \sum_{n=1}^{N} n^{-\alpha} <  \infty ,
\end{align*}
yielding $S_{2,n} \xrightarrow[]{a.s.} 0  $ by yet another application of Borel-Cantelli lemma.
Hence, strong consistency follows.

\noindent
Next, turning to the proof of the limit distribution,  using (\ref{prf-consis-mo-1}), we obtain
\begin{align}
    \sqrt{J_n(n-\tau_n)} ( \hat{m}_{n} - m^*)  
  = \sqrt{J_n(n-\tau_n)} S_{1,n} +  \sqrt{J_n(n-\tau_n)} S_{2,n}. \label{prf-consis-mo-2}
\end{align}
First, we consider the first term on the RHS of (\ref{prf-consis-mo-2}) under different cases of $\tau_n $ in sampling scheme \ref{assum-tau_n}. If $n-\tau_n \to \infty $, which is true for case (i) and case (iii), using the same argument as (\ref{consis-mo-p1}), we get
\begin{align}
   \bm{P} ( |\sqrt{J_n(n-\tau_n)} S_{1,n} | > \epsilon )   \le  \frac{1}{n-\tau_n} \bm{Var} \left(  \sum_{l=\tau_n}^{n-1}   Y_{l,j} \right) \to 0 .
\end{align} 
On the other hand, if $n-\tau_n $ is fixed as in case (ii), the inequality above also follows because $\bm{Var}\left(  \sum_{l=\tau_n}^{n-1}   Y_{l,j} \right) \to 0 $ by Lemma \ref{varsumYl-finite}. Now we turn to the second term on the RHS of (\ref{prf-consis-mo-2}).
Setting $\sqrt{J_n(n-\tau_n)} S_{2,n} \coloneqq  G_n $ and
$N_n = J_n(n-\tau_n)$,  the characteristic function of $G_n$, using the  independence over $j$ and $l$, is given by:
\begin{align*}
\phi_{G_n}(t) = E\left[ e^{ i t G_n } \right]
 = \left( \phi_X\left( \frac{ t }{ \sqrt{ J_n (n - \tau_n) } } \right) \right)^{ J_n ( n - \tau_n ) } = \left( \phi_X\left( \frac{ t }{ \sqrt{ N_n } } \right) \right)^{ N_n },
\end{align*}
where $X=m_{1,1} - m^* $. By Theorem 8.3.3 in \cite{Chow-Ti1978} it follows that
\begin{align*}
\phi_{G_n}(t) &= \left( 1 - \frac{ \sigma^{2*} t^2 }{ 2 N_n } + o\left( \frac{ t^2 }{ N_n } \right) \right)^{ N_n }\to e^{ - \frac{ \sigma^{2*} t^2 }{ 2 } } ~~ \text{as}~~ n \to \infty.
\end{align*}
Hence, it follows that $ G_n \xrightarrow{d} N(0, \sigma^{2*})$; Finally, by  (\ref{prf-consis-mo-2}) and  Slutsky's Theorem, the Theorem follows.

\end{proof}

\subsection{Proof of Theorem \ref{consis-CLT4mataunJ} }
In the rest of this section, we assume that \ref{AssumA1}-\ref{Assum-mo4+2delta} holds. As discussed above, the asymptotic property of $\hat{m}_{A,n}$ depends on the behavior of the CV, namely $r^{2*}=m_2^* /m^{*2}$.
Hence, to prove Theorem \ref{consis-CLT4mataunJ}, we need several results concerning the asymptotic behavior of the estimator of $m_{2}^* $, namely, 
\begin{align*}
        \hat{m}_{2,n}&=\frac{1}{J_n} \sum_{j=1}^{J_n}  \frac{1}{n-\tau_n} \sum_{l=\tau_n+1}^n \left(\frac{Z_{l,j}}{Z_{l-1,j}} \right)^2,
\end{align*}
Set $Y_{2,l,j}= \left(  \frac{ Z_{l+1,j}^2  } {Z_{l,j}^2}     -m_{l,j}^2   \right)  $ and denote the generic element of $Y_{2,l,j} $ by $Y_{2,l} $. Our first Lemma is similar in spirit to Lemma \ref{varsumYl-finite} concerning the variance of the sum of $Y_{2,l}$.

\begin{lemma}
\label{EY2l^2-finite}
 For some $0<a<1 $ and some constant $  c_\delta^{**}$,
    \begin{align*}
     \limsup_{n \to \infty} \bm{Var}\left[  \sum_{l=\tau_n+1}^n     Y_{2,l}  \right]  \le  \lim_{n \to \infty}
  c_\delta^{**} \sum_{l=\tau_n+1}^\infty  l (a^{\frac{1}{2}})^l .
\end{align*}
Moreover, there exists constant $ c_\tau^{\prime} $ such that
\begin{align*}
  &\limsup_{n \to \infty}  \bm{E} \left[   \sum_{l=\tau_n+1}^{n}   Y_{2,l,j} \right]^2 \le  c_\tau^{\prime}, \text{ if $\tau_n \equiv \tau$ is fixed (case (i)), and}\\
   &\lim_{n \to \infty} \bm{E} \left[   \sum_{l=\tau_n+1}^{n}   Y_{2,l,j} \right]^2 \to  0,   \text{ if $\tau_n \to \infty$  (case (ii) and (iii)).}
\end{align*}
\end{lemma}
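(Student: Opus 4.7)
The strategy parallels Lemma~\ref{varsumYl-finite}, but requires extra care because $Y_{2,l,j}$ does not have zero conditional mean: conditioning on $(\mathcal{F}_{l,j},\zeta_{l,j})$ and using the conditionally iid offspring structure gives $\bm{E}[Y_{2,l,j}\mid\mathcal{F}_{l,j},\zeta_{l,j}]=\sigma_{l,j}^2/Z_{l,j}$, hence $\bm{E}[Y_{2,l,j}]=\gamma^{*2}\bm{E}[1/Z_l]$. My plan has three steps: first, bound $\bm{E}[Y_{2,l,j}^2]$ geometrically in $l$; second, apply Cauchy--Schwarz to all pairwise covariances; and third, sum over pairs $(l,l')$ by grouping them according to the value of $l+l'$, which is what produces the $l\,(a^{1/2})^l$ form in the stated bound.

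For the second moment, I would write $Z_{l+1,j}/Z_{l,j}=\bar\xi_l$ as the sample mean of $Z_{l,j}$ conditionally iid copies of $\xi$ given $\zeta_{l,j}$, and use the standard expansion $\bar\xi_l=m_{l,j}+\bar\eta_l$ together with the known formulas for $\bm{E}[\bar\eta_l^r]$ (for $r=2,3,4$) to obtain
\begin{align*}
\bm{E}\!\left[Y_{2,l,j}^2 \,\middle|\, \mathcal{F}_{l,j},\zeta_{l,j}\right]
= \frac{4 m_{l,j}^2\sigma_{l,j}^2}{Z_{l,j}}
+ \frac{4m_{l,j}\mu_{3,l,j}+3\sigma_{l,j}^4}{Z_{l,j}^2}
+ \frac{\mu_{4,l,j}-3\sigma_{l,j}^4}{Z_{l,j}^3},
\end{align*}
where $\mu_{r,l,j}$ denotes the $r$-th central moment of the conditional offspring distribution. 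Taking unconditional expectations (using the independence of the $\zeta$-functionals from $Z_l$) and invoking $\bm{E}[1/Z_l^p]\le\bm{E}[1/Z_l]$ for $p\ge 1$ (valid since $Z_l\ge 1$ by \ref{AssumA1}) would yield $\bm{E}[Y_{2,l,j}^2]\le C\,\bm{E}[1/Z_l]\le Cc_H a^l$ by Lemma~\ref{sum1/zn}, where $C$ depends on $\bm{E}[m_{0,1}^2\sigma_{0,1}^2]$, $\bm{E}[\sigma_{0,1}^4]$, $\bm{E}[m_{0,1}\mu_{3,0,1}]$, and $\bm{E}[\mu_{4,0,1}]$.

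Next I would apply Cauchy--Schwarz, giving $|\mathrm{Cov}(Y_{2,l,j},Y_{2,l',j})|\le\sqrt{\bm{Var}[Y_{2,l,j}]\,\bm{Var}[Y_{2,l',j}]}\le C\,a^{(l+l')/2}$, and then sum over $\tau_n+1 \le l,l'\le n$. Grouping pairs by $k=l+l'$, whose multiplicity is $O(k)$, yields
\begin{align*}
\bm{Var}\!\left[\sum_{l=\tau_n+1}^n Y_{2,l,j}\right] \le C\sum_{k\ge 2\tau_n+2} k\,(a^{1/2})^k \le c_\delta^{**}\sum_{l\ge\tau_n+1} l\,(a^{1/2})^l
\end{align*}
after a trivial reindexing, establishing the first claim. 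For the second-moment statement, I split $\bm{E}\big[(\sum Y_{2,l,j})^2\big]=\bm{Var}\big[\sum Y_{2,l,j}\big]+\big(\sum\bm{E}[Y_{2,l,j}]\big)^2$, observing that $\big(\sum\bm{E}[Y_{2,l,j}]\big)^2\le\big(\gamma^{*2}c_H\sum_{l\ge\tau_n+1}a^l\big)^2=O(a^{2\tau_n})$. Under case~(i) both terms are dominated by a finite constant $c_\tau'$; under cases~(ii)--(iii) both vanish as $\tau_n\to\infty$, each being the tail of a convergent series.

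The main obstacle is the first step: controlling $\bm{E}[Y_{2,l,j}^2]$ requires finite conditional moments of the offspring distribution through the fourth order—specifically $\bm{E}[m_{0,1}^2\sigma_{0,1}^2]$, $\bm{E}[\sigma_{0,1}^4]$, and $\bm{E}[\mu_{4,0,1}]$. Assumption~\ref{Assum-mo4+2delta} controls moments of $m_{0,1}$ and of $Z_{0,1}$, but not the environment-indexed higher central moments of $\xi$ directly, so the proof must either invoke an implicit integrability condition on these environment-averaged central moments (standard in BPRE analyses) or strengthen the hypotheses accordingly. Once these constants are verified to be finite, the remainder of the argument reduces to geometric summation tied to Lemma~\ref{sum1/zn}.
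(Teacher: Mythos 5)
Your proof is correct, but it takes a genuinely different route from the paper's. The paper decomposes the variance iteratively by conditioning on $\mathcal{F}_n$: it shows $\bm{Var}[\sum_{l=\tau_n+1}^{n} Y_{2,l}] = \bm{Var}[\sum_{l=\tau_n+1}^{n-1} Y_{2,l}] + L_{n,\tau_n}$, where $L_{n,\tau_n}$ collects a covariance term with $\sigma_n^2/Z_n$, the term $\bm{Var}[\sigma_n^2/Z_n]$, and $\bm{E}[\bm{Var}(Y_{2,n}\mid\mathcal{F}_n)]$; a separate appendix lemma then bounds $L_{n,\tau_n}\le c_\delta^{**} n (a^{1/2})^n$ using Cauchy--Schwarz together with an external fourth-moment bound on $Z_{l+1}/Z_l - m_l$ (Theorem 2.8 of Huang and Wang, 2023), and the $\sum_l l(a^{1/2})^l$ form arises from summing these increments. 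You instead bound $\bm{E}[Y_{2,l}^2]$ directly by an explicit conditional expansion of the sample mean of the offspring variables (your formula for $\bm{E}[Y_{2,l}^2\mid\mathcal{F}_l,\zeta_l]$ is correct), obtain a geometric bound $\le C\,c_H a^l$ via Lemma \ref{sum1/zn} and $Z_l\ge 1$, then apply Cauchy--Schwarz to every covariance and sum by $k=l+l'$; the mean term is handled exactly as in the paper via $\bm{E}[Y_{2,l}]=\gamma^{2*}\bm{E}[1/Z_l]$. Your approach is more elementary and self-contained (no appeal to the Huang--Wang moment theorem), at the price of explicitly carrying fourth-order environment-averaged offspring moments such as $\bm{E}[\sigma_{0,1}^4]$, $\bm{E}[\mu_{4,0,1}]$, $\bm{E}[m_{0,1}^2\sigma_{0,1}^2]$. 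Your closing caveat about these moments not being literally covered by \ref{AssumA1}--\ref{Assum-mo4+2delta} is fair, but it is not a gap relative to the paper: the paper's own argument uses the same kind of quantities (its constants $\sigma_2^*$, $\gamma_2^{2*}$, $\rho_2^{2*}$ and the cited fourth-moment bound), so both proofs rest on the same implicit integrability of higher environment-averaged offspring moments.
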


\begin{proof}
Notice that 
\begin{align}
 \bm{E} \left[  \sum_{l=\tau_n+1}^n     Y_{2,l}  \right]^2  &=  \bm{Var} \left[  \sum_{l=\tau_n+1}^n     Y_{2,l}  \right] +
     \left[\bm{E} \left(  \sum_{l=\tau_n+1}^n     Y_{2,l}  \right)\right]^2 \label{EY2l^2-finite-0}.
\end{align}
We will prove the theorem by showing the limits of the two terms on the RHS of the above equation exist and are finite. Recall that $\mathcal{F}_{n} $ denotes the  $\sigma$-field generated by $(\zeta_{0},\zeta_{1},\cdots,\zeta_{n-1}, Z_{0},\cdots,Z_{n}) $.  For the first term in (\ref{EY2l^2-finite-0}), using that $\bm{E}   (  Y_{2,n}  | \mathcal{F}_n  ) = \frac{\sigma_{n}^2}{Z_n}  $, we have
\begin{align}
      \bm{Var} \left[  \sum_{l=\tau_n+1}^n     Y_{2,l}  \right] 
      &=  \bm{Var}\left[   \bm{E}   \left[ \sum_{l=\tau_n+1}^n   Y_{2,l}  \middle| \mathcal{F}_n  \right] \right] + \bm{E}\left[   \bm{Var}   \left[ \sum_{l=\tau_n+1}^n   Y_{2,l}  \middle| \mathcal{F}_n  \right] \right]  \nonumber \\
   &=   \bm{Var}\left[ \sum_{l=\tau_n+1}^{n-1}   Y_{2,l}+    \frac{\sigma_{n}^2}{Z_n} \right] + \bm{E}[   \bm{Var}   (    Y_{2,n}  | \mathcal{F}_n  ) ] \nonumber \\ 
   &= \bm{Var}\left[  \sum_{l=\tau_n+1}^{n-1}     Y_{2,l}  \right] +2 \bm{Cov} \left[ \sum_{l=\tau_n+1}^{n-1} Y_{2,l},\frac{\sigma_{n}^2}{Z_n} \right] + \bm{Var} \left[\frac{\sigma_{n}^2}{Z_n} \right] + \bm{E}[   \bm{Var}   (    Y_{2,n}  | \mathcal{F}_n  ) ] \nonumber \\
   &\coloneqq  \bm{Var}\left[  \sum_{l=\tau_n+1}^{n-1}     Y_{2,l}  \right] + L_{n,\tau_n} . \label{VARY2l^2-iter}
\end{align}
Now, by iterating (\ref{VARY2l^2-iter}) , it follows that
\begin{align}
      \bm{Var} \left[  \sum_{l=\tau_n+1}^n     Y_{2,l}  \right]  =   \sum_{l=\tau_n+1}^n L_{n,\tau_n} \le  
  c_\delta^{**} \sum_{l=\tau_n+1}^\infty  l (a^{\frac{1}{2}})^l \label{VARY2l^2-finite}
\end{align}
for some $c_\delta^{**}>0 $, where the inequality follows by using Lemma \ref{L_ntau_n} in the Appendix \ref{apd-c-EY2l^2-finite}. On the other hand, for the second term in (\ref{EY2l^2-finite-0}), because 
\begin{align}
    \bm{E} \left[  \left( \frac{ Z_{l+1}  } {Z_l}\right)^2 \middle|\mathcal{F}_l \right]  &=  \bm{Var} \left[  \left( \frac{ Z_{l+1}  } {Z_l}\right) \middle|\mathcal{F}_l \right] +\left[ \bm{E}   \left( \frac{ Z_{l+1}  } {Z_l} \middle|\mathcal{F}_l\right) \right]^2 \\
    &= \frac{1}{Z_l} \bm{Var} [  \xi_{l,1}|\mathcal{F}_l] + \left[  \bm{E} [  \xi_{l,1}|\mathcal{F}_l] \right]^2 =   \frac{\sigma_{l}^2}{Z_l} + m_l^2,
\end{align}
 we have $ \bm{E} [   Y_{2,l}|\mathcal{F}_l ]     =  \frac{\sigma_{l}^2}{Z_l} $.
Next, notice that 
\begin{align*}
  \bm{E} \left[  \sum_{l=\tau_n+1}^n     Y_{2,l}  \right] &= \bm{E} \left[ \bm{E} \left[  \sum_{l=\tau_n+1}^{n-1}     Y_{2,l} + Y_{2,n} \middle| \mathcal{F}_n  \right]\right]  
  = \bm{E} \left[ \sum_{l=\tau_n+1}^{n-1} Y_{2,l} \right] +  \bm{E} \left[ \frac{\sigma_{n}^2}{Z_n}  \right],
\end{align*}
by iterating above and using Lemma \ref{sum1/zn}, we obtain
\begin{align}
  \bm{E} \left[  \sum_{l=\tau_n+1}^n     Y_{2,l}  \right]   =\bm{E}\left[ \sum_{l=\tau_n+1}^n \frac{\sigma_{l}^2}{Z_l}\right] = \gamma^{2*} \sum_{l=\tau_n+1}^n c_H a^l \label{EY2l^2-finite-EE}.
\end{align}
Now, using (\ref{EY2l^2-finite-0}), (\ref{VARY2l^2-finite}), and (\ref{EY2l^2-finite-EE}), it follows that
\begin{align}
   \limsup_{n\to \infty} \bm{E} \left[  \sum_{l=\tau_n+1}^n     Y_{2,l}  \right]^2  
&\le  \lim_{n\to \infty}  c_\delta^{**} \sum_{l=1}^n l (a^{\frac{1}{2}})^l + \lim_{n\to \infty} \left(  \gamma^{2*} \sum_{l=\tau_n+1}^n c_H a^l \right)^2     < \infty \label{EY2l^2-finite-1},
\end{align}
and we conclude that for some $ 0 < c_\tau^{\prime} <\infty$:

\begin{align*}
  &\limsup_{n \to \infty}  \bm{E} \left[   \sum_{l=\tau_n+1}^{n}   Y_{2,l,j} \right]^2 \le  c_\tau^{\prime}, \text{ if $\tau_n \equiv \tau$ is fixed (case (i)), and}\\
   &\lim_{n \to \infty} \bm{E} \left[   \sum_{l=\tau_n+1}^{n}   Y_{2,l,j} \right]^2 \to  0,   \text{ if $\tau_n \to \infty$  (case (ii) and (iii)).}
\end{align*}
    
\end{proof}

Next, we use the above lemma to establish the consistency of $\hat{m}_{2,n} $. We begin by adding and subtracting $m_{l,j}^2 $ to $\hat{m}_{2,n} $, to obtain
\begin{align}
        \hat{m}_{2,n}- m_{2}^* &=\frac{1}{J_n} \sum_{j=1}^{J_n}  \frac{1}{n-\tau_n} \sum_{l=\tau_n+1}^n \left(\frac{Z_{l,j}}{Z_{l-1,j}} \right)^2- m_{2}^* \nonumber  \\
        &=\frac{1}{J_n} \sum_{ j=1 }^{J}  \left[   \frac{1}{n-\tau_n}  \sum_{l=\tau_n+1}^n     \left(  \frac{ Z_{l+1,j}^2  } {Z_{l,j}^2}     -m_{l,j}^2   \right) \right] + \frac{1}{J_n} \sum_{ j=1 }^{J}  \left[   \frac{1}{n-\tau_n}  \sum_{l=\tau_n+1}^n   ( m_{l,j}^2 - m_{2}^* ) \right] \nonumber \\
        &\coloneqq S_{1,n}^{\prime}+S_{2,n}^{\prime}. \label{consis-mo2-p0}
\end{align}

\begin{theorem}
\label{consis-mo2}
 Under \ref{AssumA1}-\ref{Assum-mo4+2delta}, as $ n \to \infty$, $\hat{m}_{2,n}  \xrightarrow[]{p} m_{2}^* $.
\end{theorem}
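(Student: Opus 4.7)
The plan is to exploit the decomposition $\hat{m}_{2,n} - m_2^* = S_{1,n}' + S_{2,n}'$ already given in equation (\ref{consis-mo2-p0}) and show each term vanishes in probability via Chebyshev's inequality. The only delicate piece is $S_{1,n}'$, because $Y_{2,l,j}$ is \emph{not} mean zero — its conditional mean given $\mathcal{F}_{l}$ is $\sigma_{l}^2/Z_l$ — so I will have to control both its mean and its variance separately.

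First, for $S_{1,n}'$, I would write $\mathbf{E}[S_{1,n}'] = (n-\tau_n)^{-1}\sum_{l=\tau_n+1}^{n}\mathbf{E}[\sigma_{l}^2/Z_l]$, and use the bound $\mathbf{E}[\sigma_{l}^2/Z_l] \le \gamma^{2*} c_H a^l$ from Lemma \ref{sum1/zn} to conclude that the full infinite sum is finite; hence $\mathbf{E}[S_{1,n}'] \to 0$ in every case of Sampling Scheme \ref{assum-tau_n}, either because the denominator diverges (cases (i) and (iii)) or because $\tau_n \to \infty$ makes the tail geometric sum vanish (case (ii)). For the variance, I would compute
\begin{align*}
\mathbf{Var}(S_{1,n}') = \frac{1}{J_n(n-\tau_n)^2}\,\mathbf{Var}\!\left[\sum_{l=\tau_n+1}^{n} Y_{2,l,j}\right],
\end{align*}
and invoke Lemma \ref{EY2l^2-finite} together with the assumption $J_n \to \infty$ (implied by $n/\log J_n \to 0$ in the surrounding context) to see that this variance tends to zero in all three cases; the inner variance is bounded in case (i) and vanishes in cases (ii) and (iii). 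Combining the mean and variance bounds with Chebyshev yields $S_{1,n}' \xrightarrow{p} 0$.

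Second, for $S_{2,n}'$, observe that $\{m_{l,j}^2 : l,j\}$ is an i.i.d. collection with common mean $m_2^*$ and variance $m_4^* - m_2^{*2}$, which is finite by \ref{Assum-mo4+2delta} (since $m_{4+2\delta}^* < \infty$ implies $m_4^* < \infty$). A direct application of Chebyshev gives
\begin{align*}
\mathbf{P}(|S_{2,n}'| > \epsilon) \le \frac{m_4^* - m_2^{*2}}{J_n(n-\tau_n)\epsilon^2} \longrightarrow 0,
\end{align*}
since $J_n(n-\tau_n) \to \infty$ under the standing conditions. Hence $S_{2,n}' \xrightarrow{p} 0$.

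Finally, Slutsky's theorem (or simply adding the two probability bounds) delivers $\hat{m}_{2,n} - m_2^* \xrightarrow{p} 0$, completing the proof. The main obstacle, as anticipated, is that $Y_{2,l,j}$ has non-zero conditional mean, so the cleaner mean-zero martingale-difference analysis used in Theorem \ref{consis-CLT-mo} for $\hat{m}_n$ is not available; instead one has to rely on the harmonic-moment decay from Lemma \ref{sum1/zn} to tame both the expectation and the residual variance contributed by the $\sigma_l^2/Z_l$ term. Lemma \ref{EY2l^2-finite} packages exactly the variance bound that is needed, which is why the argument proceeds cleanly once the decomposition is in hand.
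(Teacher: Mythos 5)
Your proposal is correct and follows essentially the same route as the paper: the same decomposition into $S_{1,n}'$ and $S_{2,n}'$, with the non-centered term $S_{1,n}'$ handled by bounding its mean via the harmonic-moment decay of Lemma \ref{sum1/zn} (the $\sigma_l^2/Z_l$ contribution) and its variance via Lemma \ref{EY2l^2-finite}, and $S_{2,n}'$ dispatched by Chebyshev using the i.i.d.\ structure and finiteness of the fourth marginal moment under \ref{Assum-mo4+2delta}. The paper merely packages your mean-plus-variance control of $S_{1,n}'$ as a single second-moment (Markov) bound, $\bm{E}[S_{1,n}'^2]=\bm{Var}[S_{1,n}']+(\bm{E}[S_{1,n}'])^2$, which is the same argument.
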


\begin{proof}
We prove the theorem by showing that the RHS of (\ref{consis-mo2-p0}) converges to 0 in probability.
For $S_{1,n} $, for any $\epsilon>0$, by using Chebyshev's inequality and Lemma \ref{EY2l^2-finite},
\begin{align}
  \bm{P}(|S_{1,n}^{\prime}|>\epsilon    )   &\le
 \frac{ \bm{E} [S_{1,n}^{\prime}]^2 }{\epsilon^2} \nonumber \\ 
 & =\frac{1}{\epsilon^2} [ \bm{Var} [S_{1,n}^{\prime}]^2   +  (\bm{E} [ S_{1,n}^{\prime}] )^2 ] \nonumber \\
  &= \frac{1}{J_n(n-\tau_n)^2\epsilon^2} \bm{Var} \left[  \sum_{l=\tau_n+1}^n     Y_{2,l}  \right] +  \left[  \frac{1}{(n-\tau_n)\epsilon^2}\bm{E} \left( \sum_{l=\tau_n+1}^n     Y_{2,l}  \right) \right]^2.\label{consis-mo2-p1}
\end{align}
Now for the first term in (\ref{consis-mo2-p1}), using (\ref{VARY2l^2-finite}), Lemma \ref{sum1/zn}, and $J_n(n-\tau_n)^2 \to \infty $, as $n \to \infty$, it follows that
\begin{align}
    \frac{1}{J_n(n-\tau_n)^2\epsilon^2} \bm{Var} \left[  \sum_{l=\tau_n+1}^n     Y_{2,l}  \right]   \le  \frac{c_\delta^{**}}{J_n(n-\tau_n)^2\epsilon^2} \sum_{l=\tau_n+1}^\infty  l (a^{\frac{1}{2}})^l \to 0.
\end{align}
For the second term in (\ref{consis-mo2-p1}), by (\ref{EY2l^2-finite-EE}) and Lemma \ref{sum1/zn}, if $ n- \tau_n \to \infty$ (as in case (i) and (iii)),  as $n \to \infty$
\begin{align*}
     \frac{1}{(n-\tau_n)\epsilon^2}\bm{E} \left( \sum_{l=\tau_n+1}^n     Y_{2,l} \right) \le  \frac{\gamma^{2*}}{(n-\tau_n)\epsilon^2}  \sum_{l=\tau_n+1}^n c_H a^l \to 0.
\end{align*}
Otherwise if $ n- \tau_n $ is fixed as in case (ii), the convergence above still holds since $ \sum_{l=\tau_n+1}^n c_H a^l \to 0$.
Next, for $S_{2,n} $, using \ref{Assum-mo4+2delta}, since  $\bm{E}  [m_{0,1}^2 -m_{2}^*]^2 \coloneqq \rho_4^{*2} < \infty $, we have
\begin{align*}
     \bm{P} ( |S_{2,n}^{\prime} | > \epsilon ) \le \frac{\bm{Var}(S_{2,n}^{\prime})}{\epsilon^2} = \frac{\rho_4^{*2}}{J_n(n-\tau_n)\epsilon^2} \to 0 , ~~\text{as}~~ n \to \infty.
\end{align*}
This concludes the proof of the Theorem.
\end{proof}

\begin{remark}[Estimation of $\sigma^{2*}$]
    Since $\sigma^{2*} = m_{2}^*-m^{*2} $, by Theorem \ref{consis-CLT-mo} and Theorem \ref{consis-mo2}, deriving a consistent estimator for $\sigma^{2*} $ is straightforward, given by $\hat{m}_{2,n}-\hat{m}_n^2 $. We will use this estimator to estimate the variance of $ \hat{m}_n$ in the data analysis described in Section  \ref{sec-da}.
\end{remark}

\noindent
We focus back on the asymptotic limit distribution of $\hat{m}_{A,n}$. First, recall from (\ref{def-r_taunJ}) that
\begin{align*}
     \hat{r}_{n} = \frac{\sqrt{ \hat{m}_{2,n}}}{\hat{m}_{n}}  , \quad 
      \hat{m}_{2,n}&=\frac{1}{J_n} \sum_{j=1}^{J_n} \frac{1}{n-\tau_n} \sum_{l=\tau_n+1}^n \left(\frac{Z_{l,j}}{Z_{l-1,j}} \right)^2.
\end{align*}
In the proof of the limit distribution, we will encounter the limit behavior of $(r^{*(-1)}  \hat{r}_{n})^n$. Our next Lemma establishes that the ratio $(r^{*(-1)}  \hat{r}_{n})^n$ converges to different values depending on the sampling scheme \ref{assum-tau_n}.
\begin{theorem}
\label{ro^n to 1}
Under \ref{AssumA1}-\ref{Assum-mo4+2delta}, if $\frac{n^2}{J_n}\to 0$, then as $n \to \infty$,
\begin{align*}
    \frac{ \hat{r}_{n}^n}{r^{*n}}  \xrightarrow[]{p}   \begin{cases}
              \exp \left(\frac{\lambda_{\tau}\gamma^{2*}}{2m_{2}^*}   \right) & \text{if } \tau_n  \equiv \tau\\
              1 \quad &\text{if } \tau_n \to \infty
          \end{cases}
\end{align*}
\end{theorem}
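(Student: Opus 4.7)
The plan is to begin by factoring
\begin{align*}
\frac{\hat{r}_{n}^{n}}{r^{*n}} = \left( \frac{\hat{m}_{2,n}}{m_{2}^{*}}\right)^{n/2} \left( \frac{m^{*}}{\hat{m}_{n}}\right)^{n}.
\end{align*}
The second factor converges in probability to $1$ directly from Theorem \ref{mo^n to 1}, since the condition $n^{2}/J_{n}\to 0$ implies $n^{2}/(J_{n}(n-\tau_{n}))\to 0$. So the work reduces to understanding the first factor. I would take logarithms and Taylor expand: since Theorem \ref{consis-mo2} gives $\hat{m}_{2,n}/m_{2}^{*}\xrightarrow{p}1$, on the event where the ratio is close to $1$ one may write
\begin{align*}
\frac{n}{2}\log\frac{\hat{m}_{2,n}}{m_{2}^{*}} = \frac{n}{2m_{2}^{*}}(\hat{m}_{2,n}-m_{2}^{*}) + R_{n},\qquad R_{n}=O\!\left( n(\hat{m}_{2,n}-m_{2}^{*})^{2}\right).
\end{align*}
Controlling $R_{n}$ is the first technical step: using the decomposition $\hat{m}_{2,n}-m_{2}^{*}=S_{1,n}^{\prime}+S_{2,n}^{\prime}$ from \eqref{consis-mo2-p0}, Lemma \ref{EY2l^2-finite} bounds $\operatorname{Var}(S_{1,n}^{\prime})=O(1/(J_{n}(n-\tau_{n})^{2}))$, while $\operatorname{Var}(S_{2,n}^{\prime})=O(1/(J_{n}(n-\tau_{n})))$, and $E[S_{1,n}^{\prime}]$ decays like $1/(n-\tau_{n})$ in case (i) and exponentially in $\tau_{n}$ in cases (ii)--(iii). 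Putting these together shows $n\,E[(\hat{m}_{2,n}-m_{2}^{*})^{2}]\to 0$ under $n^{2}/J_{n}\to 0$, so $R_{n}\xrightarrow{p}0$.

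Next I would evaluate $(n/(2m_{2}^{*}))(\hat{m}_{2,n}-m_{2}^{*})$ using the two-piece decomposition. For $S_{2,n}^{\prime}$, the i.i.d.\ sum of $m_{l,j}^{2}-m_{2}^{*}$ over $J_{n}(n-\tau_{n})$ terms, Chebyshev's inequality and the finiteness of $\operatorname{Var}(m_{0,1}^{2})$ under \ref{Assum-mo4+2delta} give $n S_{2,n}^{\prime}\xrightarrow{p}0$ whenever $n^{2}/(J_{n}(n-\tau_{n}))\to 0$. For the centered part $S_{1,n}^{\prime}-E[S_{1,n}^{\prime}]$, the variance estimate \eqref{VARY2l^2-finite} from Lemma \ref{EY2l^2-finite} yields $n(S_{1,n}^{\prime}-E[S_{1,n}^{\prime}])\xrightarrow{p}0$ in all three sampling cases. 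Thus the entire stochastic fluctuation vanishes and only the deterministic bias contributes to the limit.

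The final step is to compute the bias limit. Using independence of $\sigma_{l-1,j}^{2}$ and $Z_{l-1,j}$ one obtains
\begin{align*}
E\!\left[\left(\tfrac{Z_{l,j}}{Z_{l-1,j}}\right)^{2}\right] = m_{2}^{*} + \gamma^{2*}\,E[1/Z_{l-1}],
\end{align*}
so $E[\hat{m}_{2,n}]-m_{2}^{*}=\frac{\gamma^{2*}}{n-\tau_{n}}\sum_{l=\tau_{n}}^{n-1}E[1/Z_{l}]$. In case (i) with $\tau_{n}\equiv\tau$, the factor $n/(n-\tau)\to 1$ and Lemma \ref{sum1/zn} yields $\sum_{l=\tau}^{n-1}E[1/Z_{l}]\to\lambda-\lambda_{\tau}$, producing a finite nonzero deterministic limit (up to the notational convention used for $\lambda_{\tau}$ in the theorem). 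In cases (ii) and (iii), $\tau_{n}\to\infty$ and the tail bound $E[1/Z_{l}]\le c_{H}a^{l}$ from Lemma \ref{sum1/zn} gives $\sum_{l=\tau_{n}}^{n-1}E[1/Z_{l}]=O(a^{\tau_{n}})$, which, combined with the polynomial bound on $n$ coming from $n^{2}/J_{n}\to 0$, forces $n\cdot E[S_{1,n}^{\prime}]\to 0$ and hence the limit $1$. Exponentiating and invoking the continuous mapping theorem finishes the proof.

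The main obstacle is the remainder control in the Taylor expansion: one must simultaneously leverage (a) the variance bounds coming from the harmonic moment estimates of Lemma \ref{sum1/zn}, (b) the exponentially-decaying bias in the $\tau_{n}\to\infty$ regimes, and (c) the strengthened growth assumption $n^{2}/J_{n}\to 0$, which is needed here (as opposed to the weaker $n^{2}/(J_{n}(n-\tau_{n}))\to 0$ in Theorem \ref{mo^n to 1}) precisely because $\hat{m}_{2,n}$ carries a nonnegligible bias whose magnitude is governed by $n-\tau_{n}$ rather than $J_{n}$.
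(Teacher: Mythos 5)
Your proposal is correct and is essentially the paper's own argument: you use the same factorization $\hat r_n^n/r^{*n}=(\hat m_{2,n}/m_2^*)^{n/2}\,(m^*/\hat m_n)^n$, the same identification of the bias $b_{2,n}^*=\gamma^{2*}(n-\tau_n)^{-1}\sum_{l}\bm{E}[Z_l^{-1}]$ as the sole source of the nontrivial limit, and the same inputs (Lemma \ref{EY2l^2-finite} for the centered fluctuation $S_{1,n}^{\prime}$, the i.i.d.\ variance bound for $S_{2,n}^{\prime}$, Lemma \ref{sum1/zn} for the bias, and Theorem \ref{mo^n to 1} for the second factor), the only real difference being that you linearize the logarithm and control the quadratic remainder via $n\,\bm{E}[(\hat m_{2,n}-m_2^*)^2]\to 0$, whereas the paper bounds the two one-sided tail probabilities directly with $e^x-1>x$ and Chebyshev, mirroring the proof of Theorem \ref{mo^n to 1}. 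Your case-(i) constant, $\exp\bigl(\gamma^{2*}(\lambda-\lambda_\tau)/(2m_2^*)\bigr)$ with $\lambda_\tau=\sum_{l=0}^{\tau-1}\bm{E}[Z_l^{-1}]$, is exactly what the paper's computation also produces (and matches how the constant appears in Theorem \ref{consis-CLT4mataunJ}), so your parenthetical about the $\lambda_\tau$ convention reflects an inconsistency in the theorem's statement rather than a gap in your argument.
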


\begin{proof}
Notice that
\begin{align*}
     \frac{ \hat{r}_{n}^n}{r^{*n}} &=   \left( \frac{ \hat{m}_{2,n}}{m_{2}^*} \right) ^\frac{n}{2} \cdot \left(\frac{m^* }{ \hat{m}_{n}} \right)^n .
\end{align*}
Using Theorem \ref{mo^n to 1}, the second term in the above converges to one in probability. Hence, it is sufficient to show that, as $n \to \infty$,
\begin{align*}
     \left( \frac{ \hat{m}_{2,n}}{m_{2}^*} \right) ^n\xrightarrow[]{p}   \begin{cases}
              \exp \left(\frac{\lambda_{\tau}\gamma^{2*}}{m_{2}^*}   \right) & \text{if } \tau_n  \equiv \tau\\
              1 \quad &\text{if } \tau_n \to \infty.
          \end{cases}
\end{align*}
 Now, since
\begin{eqnarray*}
\bm{E}[ \hat{m}_{2,n} ] \coloneqq   m_{2,n}^*=   \bm{E} \left(  \frac{1}{n-\tau_n}  \sum_{l=\tau_n}^{n-1}     \frac{ Z_{l+1,j}^2  } {Z_{l,j}^2}   \right)= m_{2}^* +  \gamma^{2*} \frac{1}{n-\tau_n} \sum_{l=\tau_n}^{n-1} \bm{E}\left[\frac{1}{Z_l} \right] \coloneqq m_{2}^* + b_{2,n}^*,
\end{eqnarray*}
we can rewrite $    ( \frac{ \hat{m}_{2,n}}{m_{2}^*} ) ^n$ as
\begin{align*}
    \left( \frac{ \hat{m}_{2,n}}{m_{2}^*} \right) ^n &=  \left( \frac{ \hat{m}_{2,n}}{ m_{2,n}^*} \right) ^n \cdot \left( \frac{ m_{2,n}^*}{m_{2}^*} \right) ^n=  \left( \frac{ \hat{m}_{2,n}}{ m_{2,n}^*} \right) ^n \left( \frac{m_{2}^*+b_{2,n}^*}{m_{2}^*}\right)^n.
\end{align*}
It is sufficient to show that the first term on the RHS of the above equation converges to 1 in probability and that the second term converges to the limit specified in the statement of the Theorem. To this end, 
\begin{align}
\bm{P} \left[ \left|  n\log \frac{ \hat{m}_{2,n} } {m_{2,n}^*}  \right|> \epsilon  \right] 
     & = \bm{P} \left[   \hat{m}_{2,n}   > m_{2,n}^* \exp \left( \frac{  \epsilon}{n}\right)  \right]  + \bm{P} \left[   \hat{m}_{2,n}   < m_{2,n}^* \exp \left(- \frac{  \epsilon}{n}\right)  \right] \label{mo2^n2parts} .
\end{align}
Now, since $m_{2,n}^*=m_{2}^*+b_{2,n}^* $, we have $ \hat{m}_{2,n} -m_{2,n}^*=  ( \hat{m}_{2,n} - b_{2,n}^*) - m_{2}^* $. Then by adding and subtracting $  m_{l,j}^2 $, we have $  \hat{m}_{2,n} -m_{2,n}^* =  S_{1,n}^{\prime\prime}+S_{2,n}^{\prime}$, where
\begin{align*}
    S_{1,n}^{\prime\prime}=  \frac{1}{J_n}\sum_{ j=1 }^{J_n} \left[ \frac{1}{n-\tau_n}  \sum_{l=\tau_n}^{n-1}     \left( \frac{ Z_{l+1,j}^2  } {Z_{l,j}^2}  - m_{l,j}^2 \right) - b_{2,n}^* \right] ~~ \text{and}~~
    S_{2,n}^{\prime}=   \frac{1}{J_n} \sum_{ j=1 }^{J_n}       \frac{1}{n-\tau_n}  \sum_{l=\tau_n}^{n-1}  \left(   m_{l,j}^2 -m_{2}^*  \right) .
\end{align*}
Then for the first term on the RHS of (\ref{mo2^n2parts}), we have
\begin{align*}
    S_{1,n}^{\prime\prime}+ S_{2,n}^{\prime}+m_{2,n}^* > m_{2,n}^* \exp \left( \frac{  \epsilon}{n}\right) 
    \iff & S_{1,n}^{\prime\prime}+S_{2,n}^{\prime} > m_{2,n}^*   \left[ \exp \left( \frac{  \epsilon}{n}\right) -1 \right]
\end{align*}
Hence, since $ m_{2,n}^*> m_{2}^* $ for all $n$, using $ e^x-1>x$ for $x>0$,
\begin{align}
\bm{P} \left[   \hat{m}_{2,n}   > m_{2,n}^* \exp \left( \frac{  \epsilon}{n}\right)  \right] &= \bm{P} \left\{ S_{1,n}^{\prime\prime}+ S_{2,n}^{\prime} > m_{2,n}^*   \left[ \exp \left( \frac{  \epsilon}{n}\right) -1 \right] \right\} \nonumber \\
    & \le       \bm{P} \left[ S_{1,n}^{\prime\prime} \ge \frac{\epsilon m_{2,n}^*   }{2n} \right] + \bm{P} \left[  S_{2,n}^{\prime} >   \frac{\epsilon m_{2,n}^*   }{2n}  \right]  \nonumber \\
     & \le       \bm{P} \left[ S_{1,n}^{\prime\prime} \ge \frac{\epsilon m_{2}^*   }{2n} \right] + \bm{P} \left[  S_{2,n}^{\prime} >   \frac{\epsilon m_{2}^*   }{2n}  \right]  \label{S1pS2p}. 
\end{align}
For the first term in (\ref{S1pS2p}),  using Chebyshev's inequality, and definition of $Y_{2,l,j}  $,
\begin{align}
      \bm{P} \left[ S_{1,n}^{\prime\prime} \ge \frac{\epsilon m_{2}^*   }{2n}  \right] &=   
      \bm{P} \left[ \frac{1}{J_n}\sum_{ j=1 }^{J_n} \left[ \frac{1}{n-\tau_n}  \sum_{l=\tau_n}^{n-1}     \left( \frac{ Z_{l+1,j}^2  } {Z_{l,j}^2}  - m_{l,j}^2 \right)  - b_{2,n}^* \right]\ge \frac{\epsilon m_{2}^*   }{2n}  \right] \nonumber \\
      &=   \bm{P} \left[ \frac{1}{J_n(n-\tau_n)}\sum_{ j=1 }^{J_n}   \sum_{l=\tau_n}^{n-1}     \left( Y_{2,l,j}  - b_{2,n}^* \right) \ge \frac{\epsilon m_{2}^*   }{2n}  \right] \nonumber \\ 
      & \le \bm{Var} \left[ \frac{n}{J_n(n-\tau_n)}  \sum_{ j=1 }^{J_n}    \sum_{l=\tau_n}^{n-1}     Y_{2,l,j}      \right] (\frac{\epsilon m_{2}^*   }{2})^{-2}  \nonumber \\
    & =\frac{n^2}{J_n (n-\tau_n)^2 (\epsilon m_{2}^*   )^2 }  \bm{Var} \left[  \sum_{l=\tau_n}^{n-1}     Y_{2,l,j}\right]   
     \to 0 \label{S1nJp-conv},
\end{align}
where the last convergence follows, using Lemma \ref{EY2l^2-finite} and $\frac{n^2}{J_n(n-\tau_n)} \to 0 $. For the second term in (\ref{S1pS2p}), because $\bm{E}[m_{0,1} ]^4 < \infty $, setting $\bm{E}  [m_{0,1}^2 -m_{2}^*]^2 = \rho_4^{*2} $ we have
\begin{align}
   \bm{P} \left[ S_{2,n}^{\prime} >   \frac{\epsilon m_{2}^*   }{2n} \right]   
  &=  \bm{P} \left[  \frac{1}{J_n} \sum_{ j=1 }^{J_n}       \frac{1}{n-\tau_n}  \sum_{l=\tau_n}^{n-1}  \left(   m_{l,j}^2 -m_{2}^*  \right) >   \frac{\epsilon m_{2}^*   }{2n} \right]  \nonumber \\ 
  &\le \bm{Var} \left[     \frac{1}{J_n}  \sum_{ j=1 }^{J_n}    \frac{1}{n-\tau_n}  \sum_{l=\tau_n}^{n-1} Y_{2,l,j} \right]  \left( \frac{\epsilon m_{2}^*   }{2n} \right)^{-2} \nonumber  \\
  &= \frac{n^2 }{J_n(n-\tau_n) } \frac{ \rho_4^{*2}}{\epsilon m_{2}^*   } \to 0,  \quad (\text{since }\frac{n^2}{J_n(n-\tau_n)} \to 0 )\label{S2nJp-conv}.
\end{align}
Now by combining (\ref{S1pS2p}), (\ref{S1nJp-conv}), and (\ref{S2nJp-conv}), we have 
\begin{align}
     \bm{P} \left[   \hat{m}_{2,n}   > m_{2,n}^* \exp \left( \frac{  \epsilon}{n}\right)  \right] \to 0 . \label{S1S2p-cov1}
\end{align}
A similar calculation can show that the second term in (\ref{mo2^n2parts}), namely
\begin{align}
     \bm{P} \left[   \hat{m}_{2,n}   < m_{2,n}^* \exp \left( - \frac{  \epsilon}{n}\right)  \right] \to 0 . \label{S1S2p-cov2}
\end{align}
Now combining (\ref{S1S2p-cov1}) and (\ref{S1S2p-cov2}), it follows that (\ref{mo2^n2parts}) converges to 0. 
Finally, using Lemma \ref{sum1/zn}, we obtain
\begin{align*}
\left( \frac{ m_{2,n}^*}{m_{2}^*} \right)^n &=  \left( \frac{m_{2}^*+b_{2,n}^*}{m_{2}^*}\right)^n  = \left[ 1+  \frac{1}{n} \left(\frac{\gamma^{2*}}{m_{2}^*}  \sum_{l=\tau_n}^{n-1} \bm{E}\left[\frac{1}{Z_l}\right] \right)\right]^n 
           \to 
          \begin{cases}
              \exp \left(\frac{\lambda_{\tau}\gamma^{2*}}{m_{2}^*}   \right) & \text{if } \tau_n  \equiv \tau\\
              1 \quad &\text{if } \tau_n \to \infty
          \end{cases},
    \end{align*}
thus completing the proof of the Lemma.
\end{proof}

We now turn to the proof of Theorem \ref{consis-CLT4mataunJ}. To prove the consistency, recall that 
\begin{align*}
    \mathcal{N}_{n}^*=   m^{*\tau_n}(m^{*(n - \tau_n +1} -1))      (m^*-1)^{-1} ~~\text{and}~~   \hat{\mathcal{N}}_{n} = \hat{m}_n^{\tau_n}(\hat{m}_n^{ n - \tau_n +1} -1)      (\hat{m}_n-1)^{-1} .
\end{align*}
The estimator can be expressed as
\begin{align*}
\hat{m}_{A,n}=  \frac{\mathcal{N}_{n}^*}{\hat{\mathcal{N}}_{n}} \cdot \left( \frac{1}{J_n} \sum_{j=1}^{J_n} \frac{1}{\mathcal{N}_{n}^* } \sum_{l=\tau_n}^n Z_{l,j} \right).    
\end{align*}
We will show that
\begin{align*}
\frac{\mathcal{N}_{n}^*} {\hat{\mathcal{N}}_{n}} \xrightarrow[]{p} 1  \quad \text{and} \quad 
\left( \frac{1}{J_n} \sum_{j=1}^{J_n} \frac{1}{\mathcal{N}_{n}^* } \sum_{l=\tau_n}^n Z_{l,j} \right)     \xrightarrow[]{p} m_A.
\end{align*}
The behavior of $\frac{\mathcal{N}_n^*}{\hat{\mathcal{N}}_n}$ is required in several results, and we separate it as a Lemma below.

\begin{lemma}
\label{mathcal_N-p1}
Under the condition of Theorem \ref{mo^n to 1}, $     \frac{\mathcal{N}_{n}^*} {\hat{\mathcal{N}}_{n}} \xrightarrow[]{p} 1    $ as $n \to \infty $.
\end{lemma}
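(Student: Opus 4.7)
The plan is to decompose the ratio into three factors, each of which converges to one in probability, and then invoke Slutsky's theorem. Specifically, write
\begin{align*}
\frac{\mathcal{N}_n^*}{\hat{\mathcal{N}}_n} = \left(\frac{m^*}{\hat{m}_n}\right)^{\tau_n} \cdot \frac{m^{*(n-\tau_n+1)} - 1}{\hat{m}_n^{n-\tau_n+1} - 1} \cdot \frac{\hat{m}_n - 1}{m^* - 1} \eqqcolon A_n \cdot B_n \cdot C_n.
\end{align*}
The factor $C_n$ converges to $1$ in probability by Theorem \ref{consis-CLT-mo} together with the continuous mapping theorem, noting that $m^* > 1$ by \ref{AssumA1}--\ref{AssumA2}.

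For $A_n$, the key observation is that the proof of Theorem \ref{mo^n to 1} in fact establishes $n\log(\hat{m}_n/m^*) \xrightarrow{p} 0$. Since $\tau_n \leq n$ in every case of the sampling scheme,
\begin{align*}
|\tau_n \log(\hat{m}_n / m^*)| \leq |n\log(\hat{m}_n/m^*)| \xrightarrow{p} 0,
\end{align*}
and hence $A_n = \exp(-\tau_n \log(\hat{m}_n/m^*)) \xrightarrow{p} 1$ by continuity of the exponential.

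The main obstacle lies in factor $B_n$, where the subtracted $-1$ in numerator and denominator prevents a direct geometric-sum simplification. I would split by sampling scheme. Under case (ii) with $n-\tau_n \equiv \delta_m$ fixed, $B_n$ is a continuous function of $\hat{m}_n$ that equals $1$ at $\hat{m}_n = m^*$, so Theorem \ref{consis-CLT-mo} and the continuous mapping theorem give $B_n \xrightarrow{p} 1$. Under cases (i) and (iii), where $n - \tau_n \to \infty$, I would factor out the leading term,
\begin{align*}
B_n = \left(\frac{m^*}{\hat{m}_n}\right)^{n-\tau_n+1} \cdot \frac{1 - m^{*-(n-\tau_n+1)}}{1 - \hat{m}_n^{-(n-\tau_n+1)}},
\end{align*}
and handle the first factor exactly as $A_n$ (since $n-\tau_n+1 \leq n+1$). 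The numerator of the second factor tends to $1$ deterministically since $m^* > 1$; for the denominator I would use that $\hat{m}_n \xrightarrow{p} m^* > 1$ to deduce $\bm{P}(\hat{m}_n > (1+m^*)/2) \to 1$, on which event $\hat{m}_n^{-(n-\tau_n+1)}$ is dominated by $((1+m^*)/2)^{-(n-\tau_n+1)} \to 0$. Combining the three factors via Slutsky completes the proof. The most delicate point to verify carefully is the uniform control of $\hat{m}_n$ away from $1$ needed to justify the expansion in cases (i) and (iii).
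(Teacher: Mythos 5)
Your proposal is correct and follows essentially the same route as the paper: the identical three-factor decomposition of $\mathcal{N}_n^*/\hat{\mathcal{N}}_n$, with the first factor controlled via $n\log(\hat m_n/m^*)\xrightarrow{p}0$ from Theorem \ref{mo^n to 1} (using $\tau_n\le n$) and the last by consistency of $\hat m_n$ from Theorem \ref{consis-CLT-mo}. Your case-by-case treatment of the middle factor simply spells out the step the paper dismisses with ``it follows that,'' so no substantive difference remains.
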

\begin{proof}
First observe that, by Theorem \ref{consis-CLT-mo}, 
$\frac{\hat{m}_{n}-1}{m^*-1 } \xrightarrow[]{p} 1$.
Also, by Theorem \ref{mo^n to 1} and the fact that $\tau_n < n $,  we have
$ \frac{m^{*\tau_n}   }{\hat{m}_{n}^{\tau_n}   } \xrightarrow[]{p} 1 $. It follows that $\frac{(m^{*(n - \tau_n +1)} -1)   }{(\hat{m}_{n}^{n - \tau_n +1} -1)        } \xrightarrow[]{p} 1$. Combining the above statements, the conclusion follows.
\end{proof}

\noindent
 Next Lemma, is required for the proof of the Theorem \ref{consis-CLT4mataunJ}.

\begin{lemma}
\label{CLT4maC1}      
Under the condition of \ref{consis-CLT4mataunJ}, as $ n \to \infty$,
\begin{align*}
  \frac{1}{r^{*n}}  \sqrt{J_n}\left( \frac{m^{*n}  } { \hat{m}_{n}^{n}} -1 \right)  \xrightarrow[]{p} 0 .
\end{align*}
Additionally 
    \begin{align*}
        \frac{1}{r^{*n}} \sqrt{J_n}\left( \frac{\mathcal{N}_{n}^*}{\hat{\mathcal{N}}_{n}} -1 \right)  \xrightarrow[]{p} 0.
    \end{align*}
\end{lemma}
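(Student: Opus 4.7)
The plan is to reduce both claims to (i) the rate $\hat m_n - m^* = O_p((J_n(n-\tau_n))^{-1/2})$ from Theorem \ref{consis-CLT-mo}, (ii) the convergence $(\hat m_n/m^*)^n \xrightarrow[]{p} 1$ from Theorem \ref{mo^n to 1}, and (iii) the strict inequality $r^* > 1$ (by Jensen's inequality under \ref{AssumA2}), which combined with $n/\log J_n \to 0$ forces $r^{*-n}$ to dominate any polynomial factor in $n$ divided by $\sqrt{J_n}$.

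For the first display, set $\Delta_n = n\log(\hat m_n/m^*)$ and apply the mean value theorem to $e^{-x}$ to write
\begin{align*}
\frac{m^{*n}}{\hat m_n^{n}} - 1 = e^{-\Delta_n} - 1 = -\Delta_n\,e^{-\theta_n},
\end{align*}
with $\theta_n$ between $0$ and $\Delta_n$. Theorem \ref{mo^n to 1} gives $\Delta_n \xrightarrow[]{p} 0$, so $e^{-\theta_n} = 1 + o_p(1)$ and the task reduces to $\sqrt{J_n}\,r^{*-n}\,\Delta_n \xrightarrow[]{p} 0$. A Taylor expansion of $\log(1+\cdot)$ around zero yields
\begin{align*}
\Delta_n = \frac{n}{m^*}(\hat m_n - m^*) + O_p\bigl(n(\hat m_n - m^*)^2\bigr),
\end{align*}
and substituting the rate from Theorem \ref{consis-CLT-mo} gives $\sqrt{J_n}\,r^{*-n}\,\Delta_n = O_p(n/(r^{*n}\sqrt{n-\tau_n})) + O_p(n/(r^{*n}\sqrt{J_n}(n-\tau_n)))$, and both terms vanish by the exponential dominance of $r^{*n}$ over any polynomial in $n$.

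For the second display, factor
\begin{align*}
\frac{\mathcal{N}_n^*}{\hat{\mathcal{N}}_n} = A_n B_n C_n, \qquad A_n = \frac{m^{*\tau_n}}{\hat m_n^{\tau_n}},\quad B_n = \frac{m^{*(n-\tau_n+1)}-1}{\hat m_n^{n-\tau_n+1}-1},\quad C_n = \frac{\hat m_n - 1}{m^* - 1},
\end{align*}
and decompose $\mathcal{N}_n^*/\hat{\mathcal{N}}_n - 1 = (A_nB_n - 1)C_n + (C_n - 1)$. The factor $C_n - 1 = O_p((J_n(n-\tau_n))^{-1/2})$ contributes $O_p(r^{*-n}(n-\tau_n)^{-1/2})$ after scaling, which vanishes. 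In cases (i) and (iii), where $n - \tau_n \to \infty$, one has $B_n = (m^*/\hat m_n)^{n-\tau_n+1}\bigl(1 + o_p(r^{*-(n-\tau_n)})\bigr)$, so $A_nB_n = (m^*/\hat m_n)^{n+1}(1 + o_p(r^{*-n}))$. Writing $(m^*/\hat m_n)^{n+1} - 1 = (m^*/\hat m_n)[(m^*/\hat m_n)^n - 1] + (m^*/\hat m_n - 1)$, invoking Part 1 on the bracketed piece, and using $m^*/\hat m_n - 1 = O_p((J_n(n-\tau_n))^{-1/2})$, one obtains $\sqrt{J_n}\,r^{*-n}(A_nB_n - 1) \xrightarrow[]{p} 0$, and Slutsky completes these two cases.

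The main obstacle is case (ii), where $n - \tau_n = \delta_m$ is fixed and $B_n$ is a ratio of two polynomials in $\hat m_n$ and $m^*$ of common degree $\delta_m + 1$ that does not simplify to a pure power. Here I will treat $B_n$ as a smooth function of $\hat m_n$ that equals $1$ at $\hat m_n = m^*$; the mean value theorem then gives $B_n - 1 = O_p(\hat m_n - m^*) = O_p(J_n^{-1/2})$, so the contribution of $B_n$ under the scaling $\sqrt{J_n}/r^{*n}$ is $O_p(r^{*-n}) \to 0$. For $A_n$ in case (ii) the exponent $\tau_n = n - \delta_m \to \infty$, and the argument of Part 1 applies verbatim with $\tau_n$ in place of the outer $n$ (the hypotheses transfer since $\tau_n \le n$), giving $\sqrt{J_n}\,r^{*-\tau_n}(A_n - 1) \xrightarrow[]{p} 0$; since $r^{*-n} = r^{*-\delta_m}\,r^{*-\tau_n}$ with $r^{*-\delta_m}$ a positive constant, this only tightens the bound, and assembling the three factors via Slutsky concludes the proof.
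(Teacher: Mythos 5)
Your Part 1 is correct and is essentially the paper's own mechanism in different clothing: the paper uses the algebraic identity $x^n-1=(x-1)\sum_{k=0}^{n-1}x^k$ where you use $\log$ plus the mean value theorem, but both reduce the claim to $\sqrt{J_n(n-\tau_n)}(\hat m_n-m^*)=O_p(1)$ (Theorem \ref{consis-CLT-mo}) multiplied by a factor of order $n\,r^{*-n}(n-\tau_n)^{-1/2}$, which vanishes because $r^*>1$.

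The gap is in Part 2, cases (i) and (iii), at the step $B_n=(m^*/\hat m_n)^{n-\tau_n+1}\bigl(1+o_p(r^{*-(n-\tau_n)})\bigr)$. First, this bound is asserted rather than proved, and the further upgrade to $o_p(r^{*-n})$ fails in case (iii), where $n-\tau_n$ can be a fixed fraction of $n$ so that $r^{*-(n-\tau_n)}\gg r^{*-n}$. Second, and more fundamentally, a correction of size $o_p(r^{*-(n-\tau_n)})$ — indeed any bound that is merely exponentially small in $n$ — cannot close the argument: after multiplying by the scaling $\sqrt{J_n}\,r^{*-n}$ you are left with $o_p\bigl(\sqrt{J_n}\,r^{*-n}r^{*-(n-\tau_n)}\bigr)$, and the hypothesis $n/\log J_n\to 0$ allows $J_n$ to grow superexponentially (e.g.\ $J_n=e^{n^2}$), so $\sqrt{J_n}$ defeats any deterministic exponential decay in $n$. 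The scaling can only be absorbed by error terms that carry a factor $\hat m_n-m^*=O_p\bigl((J_n(n-\tau_n))^{-1/2}\bigr)$. Your route is repairable precisely because the correction does carry such a factor: writing it as $\bigl(\hat m_n^{-(n-\tau_n+1)}-m^{*-(n-\tau_n+1)}\bigr)/\bigl(1-\hat m_n^{-(n-\tau_n+1)}\bigr)$ and applying the mean value theorem to $x\mapsto x^{-(n-\tau_n+1)}$ on the event $\hat m_n\ge(1+m^*)/2>1$ gives a bound of order $(n-\tau_n)\rho^{\,n-\tau_n}\,|\hat m_n-m^*|$ with $\rho<1$, which does survive multiplication by $\sqrt{J_n}\,r^{*-n}$; but the bound must be stated in that form, not as $o_p(r^{*-(n-\tau_n)})$. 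The paper sidesteps the issue entirely: it writes $\hat{\mathcal{N}}_n/\mathcal{N}_n^*-1=\sum_{k=\tau_n}^n(\hat m_n^k-m^{*k})/\sum_{k=\tau_n}^n m^{*k}$, bounds each summand by $m^{*n}\bigl|(\hat m_n/m^*)^n-1\bigr|$ using $|a^k-1|\le|a^n-1|$, and thereby reduces the second claim to $n$ times the Part 1 quantity, so every term automatically contains the factor $\hat m_n-m^*$ and no separate analysis of the ``$-1$'' in the geometric sums is needed.
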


\begin{proof}
Using algebra,
\begin{align*}  
\frac{1}{r^{*n}}\sqrt{J_n}\left( \frac{ \hat{m}_{n}^{n}}{m^{*n}  }  -1 \right) &= \left[ \frac{1}{n} \sum_{k=0}^{n-1} \left(\frac{\hat{m}_{n}}{ m^*} \right)^k \right] \left[  \sqrt{J_n (n-\tau_n)} (\hat{m}_{n}-m^*)\right] \frac{n}{r^{*n} m^* (n-\tau_n)^{\frac{1}{2}}   } \\
& \coloneqq R_n{(1)} \cdot R_n{(2)} \cdot R_n{(3)}
\end{align*}
By Theorem \ref{consis-CLT-mo}, $ R_n{(2)} $ converges in distribution to $ N(0, \sigma^{2*} )$.  Turning to $R_n{(1)} $, using $|a^k-1| \le |a^n -1|$, for any $a>0$ and $ 1\le k \le n$, it follows that using Theorem  \ref{mo^n to 1}
the first term converges to 1 in probability. Now using $r^*>1 $, $R_n{(3)} $ converges to 0, the proof of the Lemma follows. Turning to the behavior of $\frac{\mathcal{N}_{n}^*}{\hat{\mathcal{N}}_{n}} $ since $m^* >1$, we have for $ 1 \le  k \le n $, $|\hat{m}_{n}^k - m^{*k} |=  m^{*k}|\frac{\hat{m}_{n}^k}{ m^{*k}}-1 | \le m^{*n}|\frac{\hat{m}_{n}^n}{m^{*n}}-1|$. Hence 
\begin{align*}
    \frac{1}{r^{*n}} \sqrt{J_n}\left( \frac{\mathcal{N}_{n}}{\mathcal{N}_{n}^*} -1 \right)  
   &=\frac{1}{r^{*n}} \frac{1}{\sum_{k=\tau_n}^{n} m^{*k}} \sqrt{J_n}\left( \sum_{k=\tau_n}^{n} \hat{m}_{n}^k - m^{*k} \right)  \\
   &\le  \frac{n}{r^{*n}} \frac{m^{*n}}{\sum_{k=\tau_n}^{n} m^{*k}} \sqrt{J_n}\left| \frac{ \hat{m}_{n}^{n}}{m^{*n}  }  -1  \right|  \\
   & =\frac{m^{*n}}{\sum_{k=\tau_n}^{n} m^{*k}} \cdot  R_n{(1)} \cdot R_n{(2)} \cdot nR_n{(3)} \xrightarrow[]{p}0.
\end{align*}
The convergence follows from that $\frac{m^{*n}}{\sum_{k=\tau_n}^{n} m^{*k}} \le 1 $ and $nR_n(3)  \xrightarrow[]{p} 0$. 
This concludes the proof of lemma.

\end{proof}

We now turn to the proof of Theorem \ref{consis-CLT4mataunJ}. The proof will be based on the central limit theorem for Martingale arrays as described in Corollary \ref{PHclt-da-C} in Appendix \ref{apd-c-tfo}. Specifically, we will verify conditions (\ref{apd-C}.\ref{PHCLTAc5}) and (\ref{apd-C}.\ref{PHCLTAc4b}).
\begin{proof}
We first prove the consistency. Recall that 
\begin{align*}
     \hat{m}_{A,n}=  \left(\frac{\mathcal{N}_{n}^*}{\hat{\mathcal{N}}_{n}}\right) \cdot \frac{1}{\mathcal{N}_{n}^*} \left( \frac{1}{J_n} \sum_{j=1}^{J_n}  \sum_{l=\tau}^n Z_{l,j} \right) .
\end{align*}
Since, by Lemma \ref{mathcal_N-p1}, the first term on the RHS of the above equation converges to one in probability, it is sufficient to show the second term converges to $m_A$ in probability. From Lemma \ref{var-sum-zn}, we note that 
\begin{align*}
     \bm{Var}  \left[  \frac{1}{r^{n*}} \frac{1}{\mathcal{N}_{n}^*} \sum_{l=\tau_n}^n Z_{l,j}   \right] < \infty.
\end{align*}  
Because by Chebyshev's inequality, for any $\epsilon>0$,
\begin{align*}
    \bm{P} \left[ \left|\frac{1}{\mathcal{N}_{n}^*} \left( \frac{1}{J_n} \sum_{j=1}^{J_n}  \sum_{l=\tau_n}^n Z_{l,j} \right)  -m_A\right| > \epsilon \right] & \le \frac{  \bm{Var}  \left[ \frac{1}{\mathcal{N}_{n}^*} \left( \frac{1}{J_n} \sum_{j=1}^{J_n}  \sum_{l=\tau_n}^n Z_{l,j} \right)   \right]}{\epsilon } \\
    &= \frac{r^{*2n}}{J_n} \frac{  \bm{Var}  \left[ \frac{1}{r^{n*}} \frac{1}{\mathcal{N}_{n}^*}    \sum_{l=\tau_n}^n Z_{l,1}    \right]}{\epsilon } \to 0,
\end{align*}
where the convergence follows using $\frac{n}{ \log J_n} \to 0 $ and $ \frac{r^{*2n}}{J_n}    \to 0 $. This completes the proof of consistency.

\noindent
For limiting distribution, notice that
\begin{align}
     \frac{1}{r^{*n}} \sqrt{J_n}  (   \hat{m}_{A,n} -m_A ) 
    =  \frac{\mathcal{N}_{n}^*}{\hat{\mathcal{N}}_{n}}  \frac{1}{r^{*n}}  \frac{1}{\sqrt{J_n}}  \sum_{j=1}^{J_n} \left(  \frac{1}{\mathcal{N}_{n}^*}\sum_{l=\tau_n}^n   Z_{l,j}-m_A \right) + \frac{1}{r^{*n}} \sqrt{J_n}\left( \frac{\mathcal{N}_{n}^*}{\hat{\mathcal{N}}_{n}} -1 \right) m_A \label{consis-CLT4mataunJ-1}.
\end{align}
For the first term of the RHS of (\ref{consis-CLT4mataunJ-1}), let
\begin{align*}
      X_{nj}= \frac{1}{r^{*n}} \frac{1}{\sqrt{J_n}}  \left(  \frac{1}{\mathcal{N}_{n}^*}  \sum_{l=\tau_n}^n   Z_{l,j}-m_A \right).
\end{align*}
Notice because $\{X_{nj}, n \ge 1, 1\le j \le  J_n  \} $ is i.i.d collection in $j$ for every $n$, by Lemma \ref{var-sum-zn},
\begin{align*}
    \sum_{j=1}^{J_n} \bm{E}[X_{nj}^2|\mathcal{F}_{n,j-1}] =  \sum_{j=1}^{J_n} \bm{E}[X_{nj}^2] \to   \begin{cases}
        \sigma_{I}^2. & \text{ if } n-\tau_n \to \infty\\
      \sigma_{F}^2(\delta_\tau)  & \text{ if } n-\tau_n \equiv \delta_\tau \\
    \end{cases} .
\end{align*}
This verifies the condition (\ref{apd-C}.\ref{PHCLTAc4b}) in Appendix \ref{apd-C}. Also, 
\begin{align}
     \frac{1}{r^{*n}}\frac{1}{\mathcal{N}_{n}^*}  \sum_{l=\tau_n}^n   Z_{l,j} &=  \frac{m^*-1}{m^*-m^{*\tau_n-n} } \sum_{l=\tau_n}^n \frac{Z_l}{m_{2}^{*\frac{n}{2}}} \le  \sum_{l=\tau_n}^n \frac{Z_l}{m_{2}^{*\frac{n}{2}}}, \label{consis-CLT4mataunJ-2}
\end{align}
 let $\epsilon>0$ be fixed, because $ \mathcal{N}_{n}^{*(-1)}  \sum_{l=\tau_n}^n   Z_{l,j} >0$, there exists a constant $ n_0(\epsilon) $, for any $ n\ge n_0(\epsilon) $ and $ J_n>\frac{m_A^2}{\epsilon^2}$, by independence in $j$, 
\begin{align*}
 \sum_{j=1}^{J_n} \bm{E} [ (X_{nj} )^2 I(|X_{nj}|>\epsilon) ]   & \le  \bm{E} \left[ \frac{1}{r^{*2n}}  \left(  \frac{1}{\mathcal{N}_{n}^*}  \sum_{l=\tau_n}^n   Z_{l,j} \right)^2 I \left(\frac{1}{r^{*2n}} \left(  \frac{1}{\mathcal{N}_{n}^*}  \sum_{l=\tau_n}^n   Z_{l,j} \right) ^2> J_n\epsilon^2 \right) \right] \\
    & \le  \bm{E} \left[  \frac{1}{m_{2}^{*n} }\left( \sum_{l=\tau_n}^n Z_l \right)^2 I\left( \frac{1}{m_{2}^{*n} }\left( \sum_{l=\tau_n}^n Z_l \right)^2 > J_n\epsilon^2  \right) \right], ~~\text{by (\ref{consis-CLT4mataunJ-2})} \\
    & \le  \sum_{k \ge J_n}^\infty  \bm{E} \left[ \frac{1}{m_{2}^{*n} }\left( \sum_{l=\tau_n}^n Z_l \right)^2: k\epsilon^2 \le \frac{1}{m_{2}^{*n} }\left( \sum_{l=\tau_n}^n Z_l \right)^2  \le (k+1)\epsilon^2 \right] \\
    & \le   \sum_{k \ge J_n}^\infty (k+1) \bm{P}\left[   \frac{1}{m_{2}^{*n} }\left( \sum_{l=\tau_n}^n Z_l \right)^2 \ge k\epsilon^2  \right] .
\end{align*}
Using the fact that $Z_n $ is non-decreasing, Markov inequality, and Theorem \ref{HL-zn/c-limit} in Appendix \ref{apd-C}, setting $\frac{m_{4+2\delta}^*}{m_{2}^{*2+\delta} } \coloneqq \Delta $:
\begin{align*}
   \sum_{k \ge J_n}^\infty (k+1) \bm{P}\left[   \frac{1}{m_{2}^{*n} }\left( \sum_{l=\tau_n}^n Z_l \right)^2 \ge k\epsilon^2  \right]
   &\le 
 \sum_{k \ge J_n}^\infty (k+1) \bm{P}\left(   \frac{n^2 Z_n^2}{m_{2}^{*n} } \ge k\epsilon^2  \right)\\
    & \le  n^{4+2\delta} \sum_{k \ge J_n}^\infty \frac{(k+1)}{k^{2+\delta}} \frac{\bm{E} [Z_{n,1}^{4+2\delta}]}{ m_{2}^{*(2+\delta)n}\epsilon^{2+2\delta}  } \\
     & \le c_4 n^{4+2\delta}  \left(\frac{m_{4+2\delta}^*}{m_{2}^{*2+\delta} } \right)^n\sum_{k \ge J_n}^\infty \frac{(k+1)}{k^{2+\delta}}     \\
     &\le n^{4+2\delta} c_{\epsilon}' \Delta^n \int_{J_n}^\infty \frac{1}{x^{1+\delta}} dx   =c_{\epsilon}''\frac{\Delta^n }{J_n^\delta} n^{4+2\delta} 
\end{align*}
for some constants $c_{\epsilon}' $ and $c_{\epsilon}''$. Next, since
\begin{align*}
    \log (c_{\epsilon}''\frac{\Delta^n }{J_n^\delta}n^{4+2\delta} ) 
    &= \log(J_n) \left( \frac{n}{\log J_n} \Delta  + \frac{c_{\epsilon}'' }{\log J_n}  +(4+2\delta)\frac{\log n}{\log J_n}-\delta \right) \to -\infty ,
\end{align*}
we have $c_{\epsilon}''\frac{\Delta^n }{J_n^\delta} n^{4+2\delta} \to 0 $, verifying condition (\ref{apd-C}.\ref{PHCLTAc5}) in Appendix \ref{apd-C}. Then by Corollary \ref{PHclt-da-C} and Lemma \ref{var-sum-zn}, it follows that
\begin{align}
\label{CLT4ma2tau-1}
 \frac{1}{r^{*n}} \frac{1}{\sqrt{J_n}}  \left(  \frac{1}{\mathcal{N}_{n}^*}  \sum_{l=\tau_n}^n   Z_{l,j}-m_A \right)\xrightarrow[]{d}    \begin{cases}
     N(0,  \sigma_{I}^2 )    & \text{ if } n-\tau_n \to \infty\\
      N(0,  \sigma_{F}^2(\delta_\tau) )    & \text{ if } n-\tau_n \equiv \delta_\tau 
    \end{cases} .
\end{align}
Now combining (\ref{consis-CLT4mataunJ-1}), Lemma \ref{mathcal_N-p1}, and Lemma \ref{CLT4maC1}, we conclude that
\begin{align*}
  \frac{1}{r^{*n}} \sqrt{J_n}  ( \hat{m}_{A,\tau_n,n,J} -m_A ) \xrightarrow[]{d}  \begin{cases}
     N(0,  \sigma_{I}^2 )    & \text{ if } n-\tau_n \to \infty\\
      N(0,  \sigma_{F}^2(\delta_\tau) )    & \text{ if } n-\tau_n \equiv \delta_\tau \\
    \end{cases} .
\end{align*}
Finally, by Theorem \ref{ro^n to 1} it follows that
\begin{align*}
      \frac{1}{\hat{r}_{n}^{n}} \sqrt{J_n} ( \hat{m}_{A,n} -m_A ) \xrightarrow[]{d} N(0,  \sigma^2_{\tau} ).
 \end{align*}

\end{proof}

\subsection{Proof of Theorem \ref{consis-CLT4ma} }
The proof of Theorem \ref{consis-CLT4ma} for other data structures stated in Section \ref{sec-mr} can be proved by a slight modification of the proof of Theorem \ref{consis-CLT4mataunJ} along with the next lemma.
\begin{lemma}   
\label{CLT4ma-1}
Under the condition of Theorem \ref{consis-CLT4ma}, as $n \to \infty$
,
\begin{align*}
     \frac{1}{r^{*n}} \sqrt{J_n} ( \tilde{m}_{A,n} -m_A ) \xrightarrow[]{d} N(0,  \frac{\mathfrak{D} }{m_{2}^*} ).
\end{align*}
\end{lemma}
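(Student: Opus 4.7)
The structure of the argument parallels that of Theorem \ref{consis-CLT4mataunJ}, with the single-generation term $Z_{n,j}$ playing the role that $\sum_{l=\tau_n}^{n} Z_{l,j}$ played there. The first step is to decompose
\begin{align*}
\frac{1}{r^{*n}}\sqrt{J_n}\bigl(\tilde{m}_{A,n} - m_A\bigr)
 = \frac{m^{*n}}{\tilde{m}_n^{n}}\cdot\frac{1}{r^{*n}\sqrt{J_n}}\sum_{j=1}^{J_n}\left(\frac{Z_{n,j}}{m^{*n}} - m_A\right)
 + \frac{m_A\sqrt{J_n}}{r^{*n}}\left(\frac{m^{*n}}{\tilde{m}_n^{n}} - 1\right).
\end{align*}
Since $\tilde{m}_n = \hat{m}_{n-1,n,J_n}$ falls under case (ii) with $\tau_n = n-1 \to \infty$, Theorem \ref{mo^n to 1} yields $m^{*n}/\tilde{m}_n^{n} \xrightarrow[]{p} 1$. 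Expanding $\tilde{m}_n^{n} - m^{*n} = (\tilde{m}_n - m^*)\sum_{k=0}^{n-1}\tilde{m}_n^{k}m^{*(n-1-k)}$ and invoking Theorem \ref{consis-CLT-mo} together with $n r^{*(-n)} \to 0$ (since $r^*>1$), the residual summand is seen to vanish in probability, exactly as in the proof of Lemma \ref{CLT4maC1}.

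\textbf{Main CLT step.} For the leading sum, set $X_{nj} = r^{*(-n)}J_n^{-1/2}\bigl(m^{*(-n)}Z_{n,j} - m_A\bigr)$, which form an i.i.d.\ triangular array in $j$ for each fixed $n$. Using $r^{*2n}=m_2^{*n}/m^{*2n}$ and the variance formula (\ref{varzn1})--(\ref{varzn1-D}),
\begin{align*}
\sum_{j=1}^{J_n}\bm{E}[X_{nj}^2]
 = \frac{1}{r^{*2n}}\bm{Var}\!\left(\frac{Z_{n,1}}{m^{*n}}\right)
 = \frac{\mathfrak{D}_n}{m_2^{*}} + \frac{\sigma_A^2}{r^{*2n}} \;\longrightarrow\; \frac{\mathfrak{D}}{m_2^{*}},
\end{align*}
which delivers the target variance and verifies condition (\ref{apd-C}.\ref{PHCLTAc4b}) in the martingale CLT of Corollary \ref{PHclt-da-C}. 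For the Lindeberg condition (\ref{apd-C}.\ref{PHCLTAc5}), I plan to bound
\begin{align*}
\sum_{j=1}^{J_n}\bm{E}\bigl[X_{nj}^2\,\mathbf{1}(|X_{nj}|>\epsilon)\bigr]
 \;\le\; \bm{E}\!\left[\frac{Z_{n,1}^{2}}{m_2^{*n}}\,\mathbf{1}\!\left(\frac{Z_{n,1}^{2}}{m_2^{*n}} > c_\epsilon J_n\right)\right],
\end{align*}
slice the right-hand side into dyadic shells $\{k \le Z_{n,1}^{2}/m_2^{*n} < k+1\}$ for $k \ge c_\epsilon J_n$, apply Markov with the $(4+2\delta)$th moment, and use Theorem \ref{HL-zn/c-limit} (via assumption \ref{Assum-mo4+2delta}) to estimate $\bm{E}[Z_{n,1}^{4+2\delta}] \le c\, m_{4+2\delta}^{*n}$. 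In contrast to the proof of Theorem \ref{consis-CLT4mataunJ}, no additional $n^{4+2\delta}$ factor arises here because the estimator uses a single generation rather than a sum. The resulting bound takes the form $c'\Delta^n/J_n^{\delta}$ with $\Delta = m_{4+2\delta}^{*}/m_2^{*(2+\delta)}$, and the assumption $n/\log J_n \to 0$ drives it to zero.

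\textbf{Concluding step.} Applying Slutsky's theorem to combine the two pieces of the decomposition yields $r^{*(-n)}\sqrt{J_n}(\tilde{m}_{A,n}-m_A) \xrightarrow[]{d} N(0,\mathfrak{D}/m_2^{*})$. Finally, since $\tau_n = n-1 \to \infty$, Theorem \ref{ro^n to 1} gives $\tilde{r}_n^{n}/r^{*n} \xrightarrow[]{p} 1$, and a last application of Slutsky replaces the $r^{*n}$ scaling by $\tilde{r}_n^{n}$, producing the statement of the lemma. The main obstacle is the Lindeberg verification, where the exponentially growing ratio $\Delta^n$ must be dominated by the polynomial slack $J_n^{\delta}$; this is exactly where the hypothesis $n/\log J_n \to 0$ is essential, and everything else in the argument is a routine adaptation of the bookkeeping in the proof of Theorem \ref{consis-CLT4mataunJ}.
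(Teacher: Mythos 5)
Your proposal is correct and follows essentially the same route as the paper: the identical decomposition into the scaled single-generation sum plus the remainder $m_A\bigl(m^{*n}/\tilde{m}_n^{n}-1\bigr)$, the martingale CLT of Corollary \ref{PHclt-da-C} with limiting variance $\mathfrak{D}_n/m_2^{*}+\sigma_A^2 r^{*(-2n)}\to \mathfrak{D}/m_2^{*}$ (the paper simply invokes the $\tau_n\equiv n$ case of (\ref{CLT4ma2tau-1}) rather than re-verifying the Lindeberg condition), and Lemma \ref{CLT4maC1} for the remainder term. The only superfluous piece is your final Slutsky step replacing $r^{*n}$ by $\tilde{r}_n^{n}$: the lemma is stated with the true scaling $r^{*n}$, and that replacement (via Theorem \ref{ro^n to 1}) belongs to the proof of Theorem \ref{consis-CLT4ma}, not to this lemma.
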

\begin{proof}

Notice that
\begin{align}
     \frac{1}{r^{*n}} \sqrt{J_n} ( \tilde{m}_{A,n} -m_A ) &= 
       \frac{m^{*n}}{ \tilde{m}_{n}^{n}}  \frac{1}{r^{*n}} \frac{1}{\sqrt{J_n}} \sum_{j=1}^{J_n}\left( \frac{Z_{n,j}}{m^{*n}} -m_A\right)  + \frac{1}{r^{*n}}\sqrt{J_n}\left( \frac{m^{*n}  } { \tilde{m}_{n}^{n}} -1 \right) m_A  \label{ma_sep2}. 
\end{align}
The first term of the RHS of (\ref{ma_sep2}) converges in distribution to $ N(0,  \frac{\mathfrak{D} }{m_{2}^*} ) $ by setting $\tau_n \equiv n $ in (\ref{CLT4ma2tau-1}), while the second term converges to 0 in probability by Lemma \ref{CLT4maC1}.
    
\end{proof}

Now we show the proof of Theorem \ref{consis-CLT4ma}.
\begin{proof}
Recall by (\ref{varzn1}) and (\ref{varzn1-D}) that
\begin{align*}
     \bm{Var} (Z_{n,1})=m_{2}^{*(n-1)} \mathfrak{D}_n + m^{*2n} \sigma_A^2  \quad \text{and} \quad  \mathfrak{D}_n \nearrow \mathfrak{D}< \infty.
\end{align*}
By Chebyshev's inequality, for any $\epsilon>0$,
\begin{align}
    \bm{P} \left[ \left|\frac{1}{J_n} \sum_{j=1}^{J_n}   \frac{Z_{n,j} }{m^{*n}} -m_A\right| > \epsilon \right] &\le \frac{  \bm{Var}  \left( \frac{1}{J_n} \sum_{j=1}^{J_n}   \frac{Z_{n,j} }{m^{*n}}   \right)}{\epsilon^2 } 
    = \frac{   r^{*2n}}{J_n} \left[ \frac{\mathfrak{D}_n}{m_{2}^*}   +  \sigma_A^2 \right] \to 0, \label{consis-CLT4ma-p1}
\end{align}
where the convergence to zero follows since $\frac{n}{ \log J_n} \to 0 $ (which implies   $ \frac{   r^{*2n}}{J_n} \to 0 $).
Now, using Theorem \ref{mo^n to 1} we have
$
          \tilde{m}_{A,n}=\tilde{m}_{n,J_n}^{-n} J_n^{-1} \sum_{j=1}^{J_n}  Z_{n,j}  \xrightarrow[]{p} m_A.
$
The proof of the theorem follows using Theorem \ref{ro^n to 1}, Lemma \ref{CLT4ma-1}, and Slutsky's theorem.
\end{proof}

\subsection{Proof of Theorem \ref{joint-CLT}}

The final result of this section concerns the proof of Theorem \ref{joint-CLT}. The proof relies on a covariance calculation which is provided in Lemma \ref{cov(TU)} in Appendix \ref{apd-c-joint-CLT}. Since the proof of Theorem \ref{joint-CLT-2} is similar, it is included in Appendix \ref{apd-c-joint-CLT-2}.

\begin{proof}
Let 
$
    \mathcal{N}_{\tau_2,n}^*= (m^*-1)^{-1} m^{*\tau_2}(m^{*(n-\tau_2 +1)} -1)     
$
and
\begin{align}
    T_{\tau_1,\tau_2,n}= \sqrt{\delta_{m}J_n}  ( \hat{m}_{\tau_1,\tau_2,n} - m^*) , \;
    S_{\tau_1,\tau_2,n}= \frac{1}{\hat{r}^n} \sqrt{J_n}  ( \hat{m}_{A,\tau_1,\tau_2,n} - m_A). \label{joint-CLT-p1}
\end{align}
For simplicity, we denote $(\hat{\mathcal{N}}_{\tau_1,\tau_2,n} , T_{\tau_1,\tau_2,n} , S_{\tau_1,\tau_2,n} )$ as $( \hat{\mathcal{N}}_{n} ,T_{\tau_1,n} , S_{\tau_1,n} )$. Now  
\begin{align}
    S_{\tau_1,n} 
     &= \left( \frac{r^{*n}}{\hat{r}^n}    \frac{\mathcal{N}_{\tau_2,n}^*}{\hat{\mathcal{N}}_{n}}  \right)
     \left[\frac{1}{r^{*n}}   \frac{1}{\sqrt{J}} \sum_{j=1}^{J}   \left(\mathcal{N}_{\tau_2,n}^*\sum_{l=\tau_2}^n Z_{l,j}- m_A \right) \right] + m_A \frac{r^{*n}}{\hat{r}^n}  \frac{1}{r^{*n}}\sqrt{J} \left( \frac{\mathcal{N}_{\tau_2,n}^*}{\hat{\mathcal{N}}_{n}}-1  \right) \nonumber \\
  &\coloneqq \mathcal{R}_n  U_{n,1} +U_{n,2}. \label{joint-CLT-p2}
\end{align}
To prove the Theorem, we use the Cramer-wold device. To this end, for any $\lambda_1,\lambda_2 $, we have 
\begin{align*}
 \begin{pmatrix} 
 \lambda_1 , \lambda_2
 \end{pmatrix}
      \begin{pmatrix}
         T_{\tau_1,n}  \\       S_{\tau_1,n}
      \end{pmatrix} 
 &= 
 \begin{pmatrix} 
 \lambda_1 , \lambda_2
 \end{pmatrix}
       \begin{pmatrix}
         T_{\tau_1,n} \\       \mathcal{R}_n  U_{n,1}
      \end{pmatrix} 
    +
     \begin{pmatrix} 
 \lambda_1 , \lambda_2
 \end{pmatrix}
        \begin{pmatrix}
         0 \\        U_{n,2}
      \end{pmatrix} .
\end{align*}   
By Lemma \ref{CLT4maC1} we know that $U_{n,2} \xrightarrow[]{p} 0$, as $n \to \infty $. Thus, consider the first term,
\begin{align*}
     \begin{pmatrix} 
 \lambda_1 , \lambda_2
 \end{pmatrix}
       \begin{pmatrix}
         T_{\tau_1,n} \\       \mathcal{R}_n  U_{n,1}
      \end{pmatrix}  
      &=
       \begin{pmatrix} 
 \lambda_1 , \lambda_2
 \end{pmatrix}
       \begin{pmatrix}
         T_{\tau_1,n} & 0  \\ 0 &  U_{n,1}
      \end{pmatrix} 
           \begin{pmatrix} 
 1 \\     \mathcal{R}_n  
 \end{pmatrix}.
\end{align*}
By using Lemma \ref{cov(TU)} in Appendix, we have $ \bm{Cov}(T_{\tau_1,n} , U_{n,1} ) \to 0$, as $n \to \infty $. 
Also, by Theorem \ref{consis-CLT-mo}, Theorem \ref{consis-CLT4mataunJ}, Lemma \ref{mathcal_N-p1}, and Theorem \ref{ro^n to 1}, it follows that
\begin{align*}
     \begin{pmatrix} 
 \lambda_1 , \lambda_2
 \end{pmatrix}
       \begin{pmatrix}
         T_{\tau_1,n} \\       \mathcal{R}_n  U_{n,1}
      \end{pmatrix}  
      &=
       \begin{pmatrix} 
 \lambda_1 ,\lambda_2
 \end{pmatrix}
       \begin{pmatrix}
         T_{\tau_1,n} & 0  \\ 0 &  U_{n,1}
      \end{pmatrix} 
           \begin{pmatrix} 
 1 \\     \mathcal{R}_n  
 \end{pmatrix} 
 \xrightarrow[]{d}N( 0, \lambda_1^2 \sigma^{2*} +\lambda_2^2 \sigma^2_{\tau} ),  ~~\text{as } n \to \infty.
\end{align*}
Then this completes the proof, since
\begin{align*}
     \begin{pmatrix}
         T_{\tau_1,n} \\      S_{\tau_1,n}
      \end{pmatrix}  
      \xrightarrow[]{d}
          N \left( \begin{pmatrix} 
    0 \\ 0 \end{pmatrix} ,
     \begin{pmatrix}
    \sigma^{2*} & 0 \\
    0 &  \sigma^2_{\tau}
    \end{pmatrix}
    \right).
\end{align*}
\end{proof}

\section{Estimation of the Variance}
\label{sec-var}

In this section, we discuss the estimation of the limiting variance of the estimator of $\tilde{m}_{A,n}$. By Theorem \ref{consis-CLT4ma}, the limiting variance of centered and scaled $\tilde{m}_{A,n} $ is $\frac{\mathfrak{D}}{m_{2}^*} $. 
\begin{align*} 
      \tilde{\kappa}_{n} \coloneqq  \frac{ 1}{\tilde{r}_{n}^{2n}}   \frac{1}{J_n} \sum_{j=1}^{J_n} \left( \frac{Z_{n,j}   }{ \tilde{m}_{n}^n } -\tilde{m}_{A,n}  \right)^2.
\end{align*}
\begin{theorem}
\label{asymp_varma}
Suppose \ref{AssumA1}-\ref{Assum-mo4+2delta} hold and  $ \frac{n}{ \log J_n} \to 0 $. Then, as $n \to \infty$, $\tilde{\kappa}_{n} \xrightarrow[]{p}\frac{\mathfrak{D}}{m_{2}^*} $.
\end{theorem}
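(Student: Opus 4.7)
The plan is to expand the squared residual in the definition of $\tilde\kappa_n$, peel off a negligible remainder, and identify what survives as essentially a scaled sample second moment whose limit can be read off from \eqref{varzn1}-\eqref{varzn1-D}. Using the identity $\tilde r_n^{2n}\,\tilde m_n^{2n}=\tilde m_{2,n}^{n}$, I would first rewrite
\begin{equation*}
\tilde\kappa_n \;=\; \underbrace{\frac{1}{J_n\,\tilde m_{2,n}^{n}}\sum_{j=1}^{J_n} Z_{n,j}^{2}}_{A_n} \;-\; \underbrace{\frac{\tilde m_{A,n}^{2}}{\tilde r_n^{2n}}}_{B_n}.
\end{equation*}

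For the remainder $B_n$, Theorem \ref{consis-CLT4ma} gives $\tilde m_{A,n}\xrightarrow{p}m_A$, while Theorem \ref{ro^n to 1} applied in the $\tau_n\to\infty$ regime (here $\tau_n=n-1$, and $n/\log J_n\to 0$ implies $n^2/J_n\to 0$, so its hypothesis is met) yields $\tilde r_n^{n}/r^{*n}\xrightarrow{p}1$. Since $r^{*2}=m_2^{*}/m^{*2}>1$ by Jensen and \ref{AssumA2}, we have $\tilde r_n^{2n}\to\infty$ in probability, whence $B_n\xrightarrow{p}0$.

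Next I would split $A_n=(m_2^{*n}/\tilde m_{2,n}^{n})\cdot\bigl(J_n^{-1}m_2^{*(-n)}\sum_{j=1}^{J_n}Z_{n,j}^{2}\bigr)$. The first factor converges to $1$ in probability by repeating the argument from the proof of Theorem \ref{ro^n to 1}: with $\tau_n=n-1\to\infty$ Lemma \ref{sum1/zn} gives $b_{2,n}^{*}=\gamma^{2*}\bm E[Z_{n-1}^{-1}]\to 0$ exponentially, so $(m_{2,n}^{*}/m_2^{*})^{n}\to 1$ and $(\hat m_{2,n}/m_{2,n}^{*})^{n}\xrightarrow{p}1$ by the same Chebyshev step used there. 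For the second factor, \eqref{varzn1}-\eqref{varzn1-D} yield
\begin{equation*}
\bm E\!\left[\frac{Z_{n,1}^{2}}{m_2^{*n}}\right] \;=\; \frac{\mathfrak D_n}{m_2^{*}} + r^{*(-2n)}(m_A^{2}+\sigma_A^{2}) \;\longrightarrow\; \frac{\mathfrak D}{m_2^{*}},
\end{equation*}
so by Chebyshev across the $J_n$ i.i.d.\ replicates indexed by $j$, convergence in probability of this second factor to $\mathfrak D/m_2^{*}$ reduces to the bound $\bm{Var}(Z_{n,1}^{2})/(J_n\,m_2^{*2n})\to 0$.

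The hard part is this last variance bound, since it demands a fourth-moment control on $Z_{n,1}$. Under \ref{Assum-mo4+2delta} the standard recursion for BPRE — condition on $\zeta_n$, expand $\bm E[Z_{n+1}^{4}\mid \mathcal F_n,\zeta_n]$ as a polynomial in $Z_n$ with coefficients involving the conditional offspring moments up to order four, then take expectation over $\zeta_n$ and iterate — gives $\bm E[Z_{n,1}^{4}]\le C\,m_{4}^{*n}$ for some finite $C$ (the recursion $u_{n+1}=m_{4}^{*}u_n+O(m_{3}^{*n})$ for $u_n=\bm E[Z_n^{4}]$ is exactly of this geometric form). Hence
\begin{equation*}
\frac{\bm{Var}(Z_{n,1}^{2})}{J_n\,m_2^{*2n}} \;\le\; \frac{C}{J_n}\!\left(\frac{m_{4}^{*}}{m_2^{*2}}\right)^{\!n} \;=\; \frac{1}{J_n}\exp\!\bigl\{n\log(m_{4}^{*}/m_2^{*2})\bigr\},
\end{equation*}
and the RHS vanishes because $n=o(\log J_n)$ by assumption. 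Combining via Slutsky then gives $A_n\xrightarrow{p}\mathfrak D/m_2^{*}$, whence $\tilde\kappa_n=A_n-B_n\xrightarrow{p}\mathfrak D/m_2^{*}$.
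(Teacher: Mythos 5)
Your proposal is correct, and it reaches the limit by a genuinely different decomposition than the paper. The paper adds and subtracts $m_A$ inside the square, producing three groups of terms plus a cross term, and then splits the dominant one further via $Z_{n,j}/\tilde m_n^n - m_A = (m^{*n}/\tilde m_n^n)(Z_{n,j}/m^{*n}-m_A)+m_A(m^{*n}/\tilde m_n^n-1)$; the surviving piece $T_n(1)$ is controlled by Chebyshev across replicates together with the fourth-moment growth bound imported from the literature (Theorem \ref{HL-zn/c-limit}, i.e.\ $\bm E[Z_n^4]\le c_4 m_4^{*n}$), and all remainders vanish by Theorems \ref{mo^n to 1}, \ref{consis-CLT4ma} and \ref{ro^n to 1}. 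You instead exploit the exact sample-variance identity $\tilde\kappa_n=\tilde m_{2,n}^{-n}J_n^{-1}\sum_j Z_{n,j}^2-\tilde m_{A,n}^2/\tilde r_n^{2n}$ (valid because $J_n^{-1}\sum_j Z_{n,j}/\tilde m_n^n=\tilde m_{A,n}$ and $\tilde r_n^{2n}\tilde m_n^{2n}=\tilde m_{2,n}^n$), which eliminates the cross terms entirely and isolates a single scaled second moment; the conversion factors $(m_2^{*n}/\tilde m_{2,n}^n)\to 1$ and $\tilde r_n^n/r^{*n}\to 1$ come from the same Theorem \ref{ro^n to 1} machinery (with $\tau_n=n-1$, and $n/\log J_n\to 0$ indeed implies $n^2/J_n\to0$), and the mean computation from \eqref{varzn1}--\eqref{varzn1-D} matches the paper's \eqref{var(zn/m^n)-to-D}. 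The probabilistic core is then the same Chebyshev estimate with $\bm{Var}(Z_{n,1}^2)/(J_n m_2^{*2n})\le C(m_4^*/m_2^{*2})^n/J_n\to0$ under $n=o(\log J_n)$; you obtain the bound $\bm E[Z_n^4]\le Cm_4^{*n}$ by a moment recursion rather than by citing Huang--Liu, which is equivalent in substance (note the recursion, like the citation, implicitly needs the marginal fourth moment of the offspring variable $\xi$, a requirement the paper also absorbs silently when invoking $\bm E[Z_1^4]<\infty$ in Theorem \ref{HL-zn/c-limit}). Your approach is arguably tidier bookkeeping; the paper's buys nothing extra here beyond reusing its already-proved lemmas verbatim. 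One small imprecision: strict inequality $r^{*2}>1$ does not follow from \ref{AssumA2} plus Jensen (a degenerate environment with nonconstant offspring law satisfies \ref{AssumA2} yet has $r^*=1$); it is an implicit standing feature of the random-environment setting that the paper also uses without a formal assumption, so it is not a gap relative to the paper, but the attribution to \ref{AssumA2} should be dropped.
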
 
\begin{proof}
By adding and subtracting $m_A $, it follows that
\begin{align}
     \frac{1}{r^{*2n}} \frac{1}{J_n}  \sum_{j=1}^{J_n} \left( \frac{Z_{n,j}   }{ \tilde{m}_{n}^n } -\tilde{m}_{A,n}  \right)^2 
     &= \frac{1}{r^{*2n}} \frac{1}{J_n}  \sum_{j=1}^{J_n} \left( \frac{Z_{n,j}   }{ \tilde{m}_{n}^n } - m_A\right)^2 + \frac{1}{r^{*2n}} \frac{1}{J_n}  \sum_{j=1}^{J_n} \left( m_A-\tilde{m}_{A,n}   \right)^2 + \nonumber \\
     &\quad \frac{1}{r^{*2n}}\frac{1}{J_n}  \sum_{j=1}^{J_n} \left( \frac{Z_{n,j}   }{ \tilde{m}_{n}^n } - m_A\right)\left(m_A-\tilde{m}_{A,n}  \right) . \label{fkD3}
\end{align}
First notice that by (\ref{varzn1})
\begin{align}
    \bm{Var} (Z_{n,1})=m_{2}^{*(n-1)} \mathfrak{D}_n + m^{*2n} \sigma_A^2
\end{align}
and by $ \mathfrak{D}_n \nearrow \mathfrak{D} $ in (\ref{varzn1-D}) , we have
\begin{align}
     \bm{E}  \left[ \frac{1}{r^{*2n}}\left( \frac{Z_{n,j}   }{ m^{*n} } - m_A \right)^2  \right] = \frac{1}{r^{*2n}}\bm{Var}  \left[  \frac{Z_{n,j}   }{ m^{*n} }   \right] 
     &= \frac{ m^{*2n}}{m_{2}^{*n}} \frac{m_{2}^{*(n-1)} \mathfrak{D}_n}{m^{*2n}} + \frac{1}{r^{*2n}} \frac{m^{*2n} \sigma_A^2}{m^{*2n}} \nonumber \\
     &= \frac{\mathfrak{D}_n}{m_{2}^*} + \frac{\sigma_A^2}{r^{*2n}}  
 \to  \frac {\mathfrak{D} }{m_{2}^*}. \label{var(zn/m^n)-to-D}
\end{align}
The first term on the RHS of (\ref{fkD3}) can be decomposed as $T_n(1)+T_n(2)+T_n(3)$, where
\begin{align*}
   T_n(1)&=  \frac{1}{r^{*2n}} \frac{1}{J_n}  \frac{m^{*2n}  }{ \tilde{m}_{n}^{2n} } \sum_{j=1}^{J_n} \left( \frac{Z_{n,j}   }{ m^{*n} } - m_A\right)^2 ~~, ~~ 
T_n(2)= \frac{1}{r^{*2n}} \frac{1}{J_n} m_A^2  \sum_{j=1}^{J_n} \left( \frac{m^{*n}  }{ \tilde{m}_{n}^{n} } - 1 \right)^2, ~~\text{and} \\
T_n(3)&= \frac{1}{J_n} m_A  
\left( \frac{m^{*n}  }{ \tilde{m}_{n}^{n} } - 1 \right)  \frac{m^{*n}  }{ \tilde{m}_{n}^{n} }  \frac{1}{r^{*2n}} \sum_{j=1}^{J_n}  \left( \frac{Z_{n,j}   }{ m^{*n} } - m_A\right).
\end{align*}
By Theorem \ref{mo^n to 1}, Theorem \ref{consis-CLT4ma}, it follow that $T_n(2)+T_n(3)\xrightarrow[]{p}0 $. Now using Theorem \ref{mo^n to 1}, Theorem \ref{HL-zn/c-limit}, and Chebyshev's inequality, it follows that $ T_n(1) \xrightarrow[]{p} \frac{\mathfrak{D}}{m_{2}^*}$, hence we have
\begin{align*}
    T_n(1)+T_n(2)+T_n(3) \xrightarrow[]{p} \frac{\mathfrak{D}}{m_{2}^*}.
\end{align*}
The details of the above calculation are in Appendix \ref{apd-asymp_varma}. Turning to the second term on the RHS of (\ref{fkD3}), it follows from Theorem \ref{consis-CLT4ma}, that 
\begin{align*}
   \frac{1}{r^{*2n}} \frac{1}{J_n}  \sum_{j=1}^{J_n} \left( m_A-\tilde{m}_{A,n}   \right)^2 &=  \frac{1}{r^{*2n}}  \left( m_A-\tilde{m}_{A,n}   \right)^2  \xrightarrow[]{p} 0, ~~ \text{as}~~ n \to\infty.
\end{align*}
Finally, for the third term on the RHS of (\ref{fkD3}), note that
\begin{align*}
  &   \frac{1}{r^{*2n}} \frac{1}{J_n}  \sum_{j=1}^{J_n} \left( \frac{Z_{n,j}   }{ \tilde{m}_{n}^n } - m_A\right)\left(m_A-\tilde{m}_{A,n}  \right) \\
    =&  \left(m_A-\tilde{m}_{A,n}  \right)  \frac{m^{*n}}{\tilde{m}_{n}^n} \frac{1}{r^{*2n}} \frac{1}{J_n}\sum_{j=1}^{J_n} \left( \frac{Z_{n,j}   }{ m^{*n} } - m_A \right)   + \frac{1}{r^{*2n}} \left(m_A-\tilde{m}_{A,n}  \right)   m_A \left(  \frac{m^{*n}}{\tilde{m}_{n}^n} -1  \right).  
\end{align*}
Now, the first term on the RHS  converges to zero since $\tilde{m}_{A,n}$ is consistent for $m_A$, Theorem \ref{mo^n to 1}, $r^* >1$, and  (\ref{consis-CLT4ma-p1}). The same argument also yields the convergence of the second term.
This completes the proof of the Theorem.
\end{proof}
We next turn our attention to the estimation of $\sigma^2_A$. Notice that $\sigma^2_A$ is a complicated function of $\mathfrak{D}$, namely
\begin{align*}
\sigma_A^2= \frac{1-r^{*(-2)}}{m_{2}^*-m^{*2}   }\left[\mathfrak{D}- m_A \gamma^{2*} \left( \frac{1}{1-(m^*r^{*2})^{-1}} \right) \right] -m_A^2.
\end{align*}
In the above, 
a consistent estimator of $\mathfrak{D}$ is
\begin{align*}
    \tilde{\mathfrak{D}}_n=  \frac{\tilde{m}_{2,n}}{\tilde{r}_{n}^{2n}}   \frac{1}{J_n} \sum_{j=1}^{J_n} \left( \frac{Z_{n,j}   }{ \tilde{m}_{n}^n } -\tilde{m}_{A,n}  \right)^2 ,
\end{align*}
and $r$ can be  estimated using $\tilde{r}_n$ defined in (\ref{def-m_21-r1}) and its consistency follows from Theorem \ref{consis-CLT-mo} and Theorem \ref{consis-mo2}. Similarly, $m_2^*$ and $m*$ can be estimated using results from Section \ref{sec-mr}. However, using the BPRE data, it is not possible to estimate $\gamma^*$ without additional information. In cases when $\gamma ^*$ can be expressed explicitly in terms of the parameters of the offspring distribution, one can obtain explicit formulae for estimating the ancestral variance in the BPRE context. In the next subsection, we describe two explicit examples of offspring distributions frequently encountered in practice with an explicit representation of $\gamma^*$. 

\subsection{Beta-Binomial model}
We first assume the offspring distribution to be 1+$Bernoulli(p_{l,j})$ where $p_{l,j} $ are i.i.d following $Beta(\alpha,\beta))$. 
This model is useful for PCR model data analysis, discussed in Section \ref{sec-da} wherein the DNA molecule replication either produces one copy or none, which can be modeled by a Bernoulli distribution conditional on the environmental variable $p_{l,j}$. In this setting, using the moments of the Beta distribution, we have
\begin{align*}
    m^* =1+\bm{E}[ p_{l,j}] =\frac{2\alpha+\beta}{\alpha+\beta},   \quad
    \sigma^{2*} = \bm{Var}[ p_{l,j}]=\frac{\alpha\beta}{(\alpha+\beta)^2(\alpha+\beta+1)} ,
\end{align*}
and
\begin{align*}
    \gamma^{2*}
    = \bm{E}[ \bm{Var}(\xi_1|p_0)]
    =\bm{E}[p_0]-\bm{E}[p_0]^2 = 3m^*-m_{2}^*-2.
\end{align*}
Hence a consistent estimate of $\gamma^{2*} $ is given by 
\begin{align*}
\tilde{\gamma}^{2}=3\tilde{m}_{n}-\tilde{m}_{2,n}-2.
\end{align*}
Thus, the estimator of $\sigma^2_A$ is
\begin{align*}
    \tilde{\sigma}_A^2= \frac{1-\tilde{r}_{n}^{-2}}{\tilde{m}_{2,n}-\tilde{m}_{n}^2   }\left[\tilde{\mathfrak{D}}_n- \tilde{m}_{A,n} \tilde{\gamma}^{2} \left( \frac{1}{1-(\tilde{m}_n\tilde{r}_{n}^{2})^{-1}} \right) \right] -\tilde{m}_{A,n}^2.
\end{align*}

\subsection{Poisson-Gamma model}
As another example, consider the offspring distribution, which is conditioned on $\lambda_{l,j}$, $1+Poisson(\lambda_{l,j})$, where $\lambda_{l,j} $'s are i.i.d. $Gamma(\alpha,\beta)$. Then we have $ m^*=1+\alpha \beta$, $\sigma^{2*}=\alpha \beta^2$ and
\begin{align*}
    \gamma^{2*}
= \bm{E}[ \bm{Var}(\xi_1|\lambda_0)] 
    =\bm{E}[\lambda_0 ] =\alpha \beta .
\end{align*}
 Because $ \gamma^{2*}=m^*-1 $, using $ \tilde{\gamma}^{2} =\tilde{m}_{n}-1 $ our consistent estimator of $\sigma^2_A$ is
\begin{align*}
    \tilde{\sigma}_A^2= \frac{1-\tilde{r}_{n}^{-2}}{\tilde{m}_{2,n}-\tilde{m}_{n}^2   }\left[\tilde{\mathfrak{D}}_n- \tilde{m}_{A,n} \tilde{\gamma}^{2} \left( \frac{1}{1-(\tilde{m}_n\tilde{r}_{n}^{2})^{-1}} \right) \right] -\tilde{m}_{A,n}^2.
\end{align*}

\section{Numerical Experiment }
\label{sec-ne}

In this section, we implement our method and evaluate its performance using a series of numerical experiments. All our results are based on 5000 simulations, and all confidence levels are 95\%.
\subsection{Numerical experiment setting}
We consider two distinct settings. In the first setting, the number of ancestors is modeled as a lower truncated at zero Poisson random variable with a mean of $m_A=10$. The offspring distributions are  modeled as 1+Bernoulli$(p_{l,j} )$, where $p_{l,j} $ are i.i.d. following a $Beta(90, 10)$ distribution. In the second setting, the number of ancestors is modeled using the probability model $4+$ a negative binomial distribution with parameters $r=4$ and $p=0.4 $. The offspring distribution are conditionally 1+Poisson$(\lambda_{l,j})$, with $\lambda_{l,j}$s being i.i.d. $Gamma(10,0.03)$. 

For estimation, we use data from $\tau=12$ to $n=20$ generations. Following (\ref{apd-C}.\ref{ma-truevar}) the true value of the variance of $\hat{m}_{A,n}$ is given by 
\begin{align}
    \Lambda_{\tau,n,J}&= \frac{1}{J} \left[ \frac{m^*-1   }{ m^{*\tau}(m^{*(n - \tau +1)} -1)     } \right]^2  \left\{  \sum_{l=\tau}^n \left[ \left(1+2 \sum_{k=1}^{n-l}  m^{*k} \right) ( m_{2}^{*(l-1)} \mathfrak{D}_l + m^{*2l} \sigma_A^2  )\right]  \right\}   \label{lambda_taunJ}.
\end{align}
In Table  \ref{tab-PBB-matau-n=20-J<11} and Table \ref{tab-NGP-matau-n=20-J<11}, we provide a comparison of the true value of the variance of $\hat{m}_{A,n}$ to its estimate, namely,
\begin{align} 
     \hat{ \Lambda}_{\tau,n,J} =  \frac{1}{J-1} \sum_{j=1}^{J} \left[ \frac{1}{\hat{\mathcal{N}}_{\tau,n,J}}\left(\sum_{l=\tau}^n Z_{l,j} \right) -\hat{m}_{A,\tau,n,J}  \right]^2 .\label{e_lambda_taunJ}
\end{align}

The point estimate, the standard error based on the asymptotic limit distribution, and the 95\% confidence intervals are summarized in Table \ref{tab-PBB-matau-n=20-J<11} and Table \ref{tab-NGP-matau-n=20-J<11} for Poisson and negative binomial cases respectively.

We notice that, in general, the number of generations may not be large enough for large sample approximations. Additionally, the limiting variance has a complicated formula that can prevent an easy application in data analysis. Hence, we propose a useful bootstrap algorithm for point estimation and confidence interval for the ancestor mean. Considering that our theoretical results depend on a large generation size and various conditions in sampling scheme \ref{assum-tau_n}, we introduce a bootstrap method independent of these conditions as a competitive data analysis tool. We obtain $J$ replicates of the bootstrap BPRE by sampling with replacement. We denote the bootstrap samples of the BPRE by $\bm{Z}_{j^*}^{(b)} \coloneqq (Z_{\tau,j^*}^{(b)}, \ldots, Z_{n, j^*}^{(b)})$, where $j^*=1, \cdots, J$. We repeat this process $B$ times. In each iteration, we calculate the estimators $\{\hat{m}_{A,b} , 1\le b \le B \}$ using (\ref{def-ma_taunj}). Subsequently, we obtain the bootstrap variance and confidence intervals using the estimators $\{\hat{m}_{A,b}, 1\le b \le B\}$. Details of this bootstrap algorithm are outlined in Algorithm \ref{bootsrap_al}. 

\subsection{Results of the numerical experiment }
The full results of the numerical experiment are shown in Table \ref{tab-PBB-matau-n=20-J<11} and \ref{tab-NGP-matau-n=20-J<11} in Appendix \ref{apd-B}. $\bar{m}_{A,\tau,n,J} $ represents the mean of the point estimates $\hat{m}_{A,\tau,n,J}$. $\bar{m}_{A,B}$ and Var$_B$ represents the bootstrap mean and variance. All bootstrap results are computed based on 200 bootstrap samples. All confidence intervals (CI) have nominal $95\%$ coverage. Here CR gives the simulated coverage rates and ML gives the mean length of the confidence intervals over the 5000 simulations. B is for the bootstrap CI.  G is for the CI based on asymptotic normality; t is for the CI based on the asymptotic normality, using the t distribution. We also include an estimator $\bar{Z_0} $ using the observed ancestors $ Z_{0,j}$ as a benchmark for our method. 

\begin{algorithm}[ht]
\caption{Bootstrap Method}
\label{bootsrap_al}
\begin{algorithmic}[1]
\State Input B, Set b=1.
\Repeat
  \State   Draw bootstrap BPRE $\{ \bm{Z}_{1^*}^{(b)} ,\bm{Z}_{2^*}^{(b)} ,\ldots,\bm{Z}_{J^*}^{(b)}   \}$ by sampling with replacement.
    \State  Compute $\hat{m}_{n}^* $ and $\hat{m}_{A,n}^* $ with $\{ \bm{Z}_{1^*}^{(b)} ,\bm{Z}_{2^*}^{(b)} ,\ldots,\bm{Z}_{J^*}^{(b)}   \}$ using (\ref{def-mo_taunj}) and (\ref{def-ma_taunj}).
    \State  $ \hat{m}_{b} \gets \hat{m}_{n}^* $ and $\hat{m}_{A,b} \gets \hat{m}_{A,n}^* $.
  \State  $b \gets b+1$.
  \Until{$b> B $}
  \State   Let $\displaystyle 
    \bar{m}_{A,B}=\frac{1}{B}\sum_{b=1}^B\hat{m}_{A,b} , \quad  \text{Var}_B=\frac{1}{B} \sum_{b=1}^B ( \hat{m}_{A,b} -\bar{m}_{A,B} )^2  .
  $
  \State Let   $ \displaystyle 
        \Hat{F}_A(t)= \frac{1}{B} \sum_{b=1}^B I(   \hat{m}_{A,b} - \bar{m}_{A,B} <t).
    $ 
  \State Let   $
   \text{CI B} = (\bar{m}_{A,B} -  t_A(1-\alpha/2) , \bar{m}_{A,B} + t_A(\alpha/2) )
  $, 
   where $ t(\alpha)= \hat{F}^{-1}(\alpha) $ and $t_A(\alpha) = \hat{F}_A^{-1}(\alpha) $.
\end{algorithmic}
\end{algorithm}
Figure \ref{Es_plot} and \ref{CR-plot} compare our estimators of $\hat{m}_{A,\tau,n,J} $ with $m_A$ and unobservable benchmark for the Beta-Binomial model. From Figure \ref{Es_plot}, it is clear that for both small and large values of $J$, our estimator remains close to $m_A$. Figure \ref{CR-plot} indicates that the average coverage rate of confidence intervals approaches $95\%$ as $ J$ increases. A similar phenomenon also occurs for the Poisson-Gamma model (figure not presented). Also, from Table \ref{tab-PBB-matau-n=20-J<11} and \ref{tab-NGP-matau-n=20-J<11}, it is evident that for a smaller number of replicates $J$, the confidence interval (CI) utilizing the t-distribution exhibits a higher coverage rate than the CI using the Gaussian distribution.

\begin{figure}[ht]
\centering
\begin{minipage}{.5\textwidth}
\centering
\includegraphics[height=7cm]{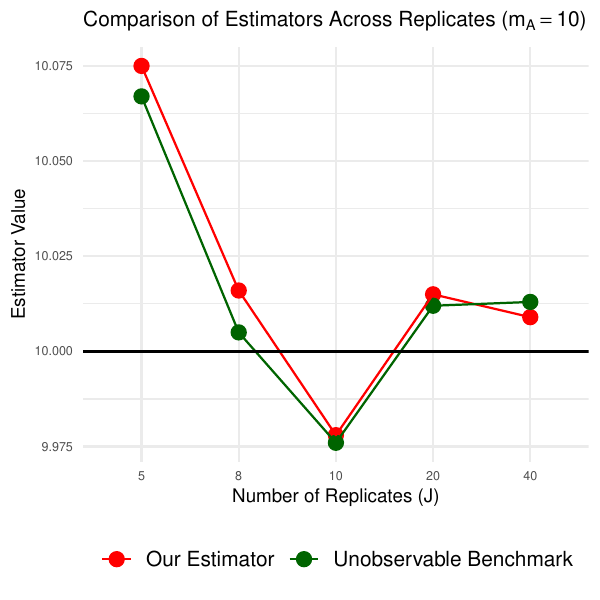} 
\caption{Estimator}
\label{Es_plot}
\end{minipage}\hfill
\begin{minipage}{.5\textwidth}
\centering
\includegraphics[height=7cm]{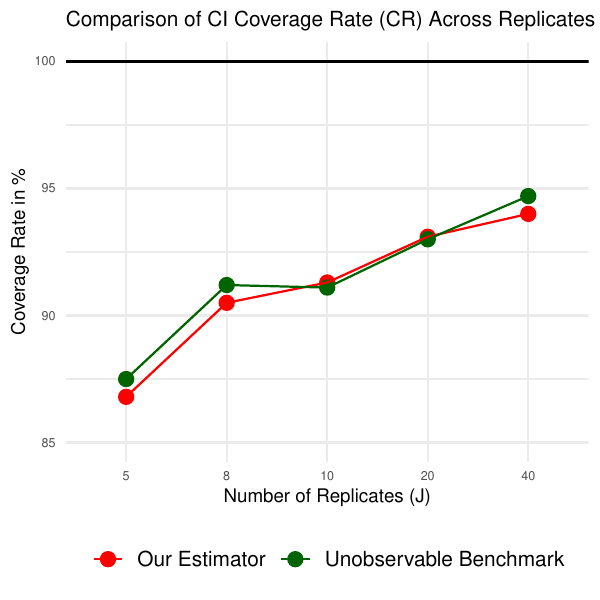} 
\caption{Coverage Rate}
\label{CR-plot}
\end{minipage}
\end{figure}

\subsection{Large offspring variance experiment}

By Theorem \ref{consis-CLT4ma}, we understand that the variance of our estimator diverges to infinity at a rate of $r^{*2n}$ as $n \to \infty$. This implies that if the offspring variance is too high compared to the offspring mean, the estimator may behave poorly when the number of replicates is small. To illustrate this, we conduct a numerical experiment where the offspring variance, $\sigma^{2*}$, is significantly larger than $m^*$.

In this experiment, as before, the number of ancestors follows a lower truncated  Poisson random variable with a mean of $m_A = 10$. The offspring distribution is, conditionally on $\lambda_{l,j}$, $1 + $Poisson$(\lambda_{l,j})$,  and $\lambda_{l,j}$ are i.i.d. $Gamma(0.4, 3)$. As a result, $r^* = 1.32$.

As shown in Table \ref{tab-PGP-matau-n=20-J<11}, the estimator's variance is high, resulting in a low coverage rate for the confidence interval. According Theorem \ref{consis-CLT4ma}, for our estimator to remain consistent, $\frac{n}{\log J_n} \to 0$ because we require $\frac{r^{*2n}}{J_n} \to 0$. However, since $r^*$ is large, achieving this necessitates many replicates.

\subsection{Comparing  the CIs from different sampling schemes in \ref{assum-tau_n}}

In this subsection, we provide a comparison of the limiting variances of $\hat{m}_{A,n}$ and the coverage rate (CR) of the CIs using bootstrap methods across all scenarios in sampling scheme \ref{assum-tau_n}. The comparisons are illustrated in Figure \ref{bvar-plot-c} and Figure \ref{bCR-plot-c}. From Figure \ref{bvar-plot-c} the variance estimators of case (i) and case (iii) are close to each other, while the variance for case (ii) is larger. Turning to the coverage rates, Figure \ref{bCR-plot-c} shows that case (iii) has the best coverage and hence is a preferred sampling scheme.

\begin{figure}[H]
    \begin{minipage}{0.45\textwidth}
        \centering
        \includegraphics[width=\textwidth]{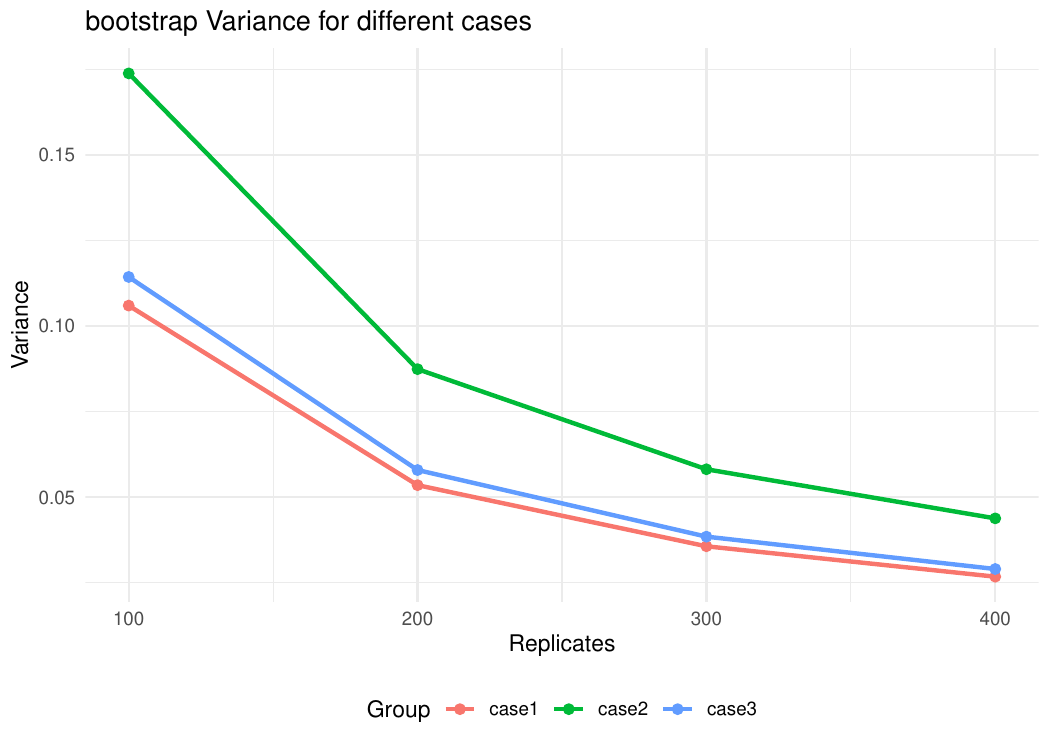}
        \caption{Bootstrap Variance}
        \label{bvar-plot-c}
    \end{minipage}\hfill
    \begin{minipage}{0.45\textwidth}
        \centering
        \includegraphics[width=\textwidth]{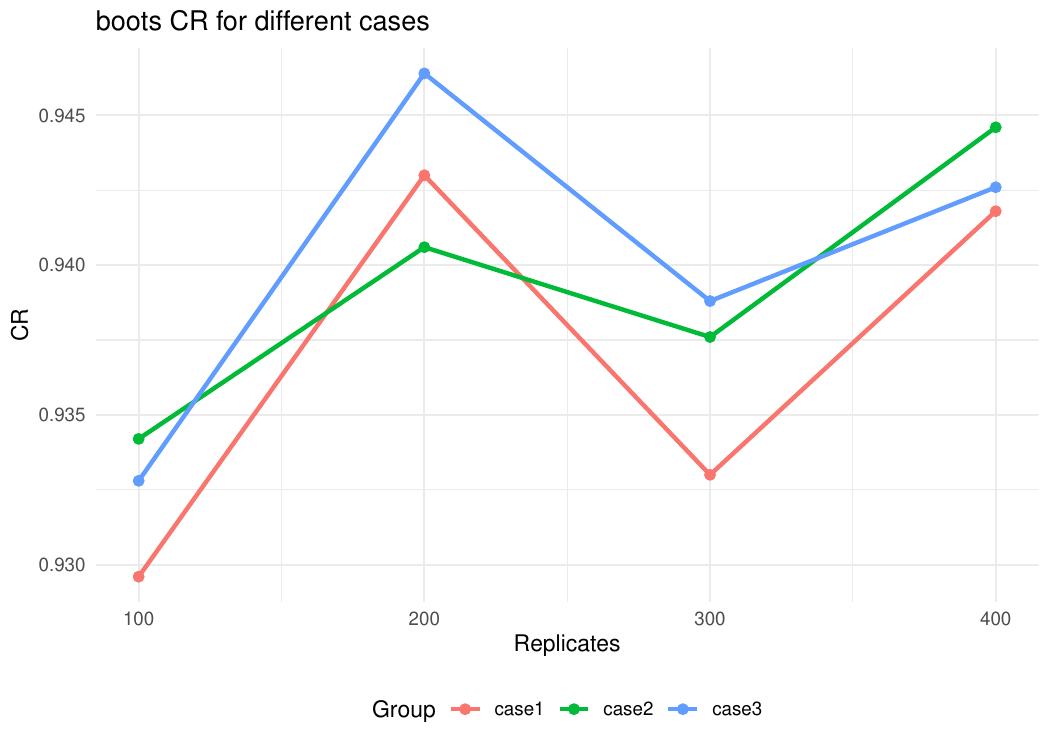}
        \caption{CR of Bootstrap CI}
        \label{bCR-plot-c}
    \end{minipage}
\end{figure}

\subsection{Comparison of learning methods}

For the learning method, we compare variances using Figures \ref{var-hist-Learning-case(i)} - Figure \ref{var-hist-Learning-case(iii)}. In these Figures, each set of bars represents the replicate size, and within a set, the orange represents the variance estimate from the entire data set (Theorem \ref{consis-CLT4mataunJ}), purple represents the variance of the estimate of $\hat{m}_{A,n}$ after learning $m^*$ from the first $\tau_1$ to $\tau_2$ generations (Theorem \ref{joint-CLT}), and blue represents the variance of the estimate of $\hat{m}_{A,n}$ where we learn $m^*$ from $\tau_2$ to $n$ generations (Theorem \ref{joint-CLT-2}). The results clearly illustrate the results from Theorem \ref{consis-CLT4mataunJ}, Theorem \ref{joint-CLT}, and Theorem \ref{joint-CLT-2}, namely that variance is minimized in the last case (blue).

We find that using earlier generations to estimate $m_A$ reduces the estimator’s variance while preserving the same convergence rate of the confidence interval for cases. This effect is more pronounced in cases (i) and (iii), where more samples are employed in the estimation. This tells that the learning method indeed helps reduce the variance of the estimator.

\begin{figure}[ht]
\centering
\begin{minipage}{.3\textwidth}
\centering
\includegraphics[height=3.6cm]{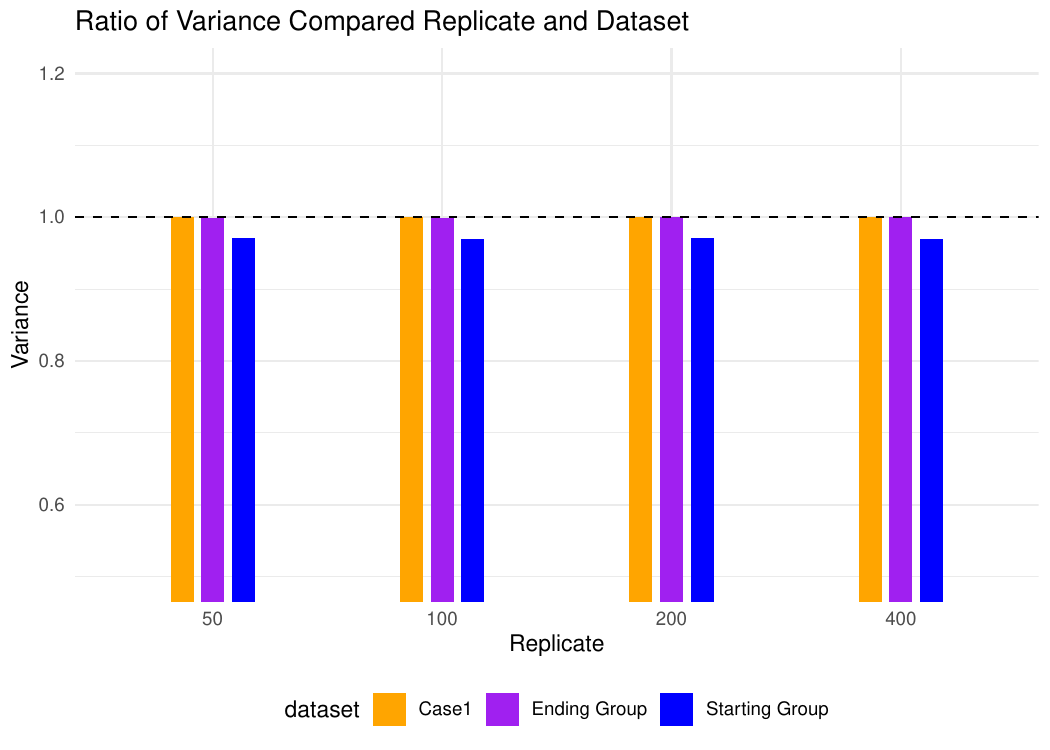} 
\caption{Case (i)}
\label{var-hist-Learning-case(i)}
\end{minipage}\hfill
\begin{minipage}{.3\textwidth}
\centering
\includegraphics[height=3.6cm]{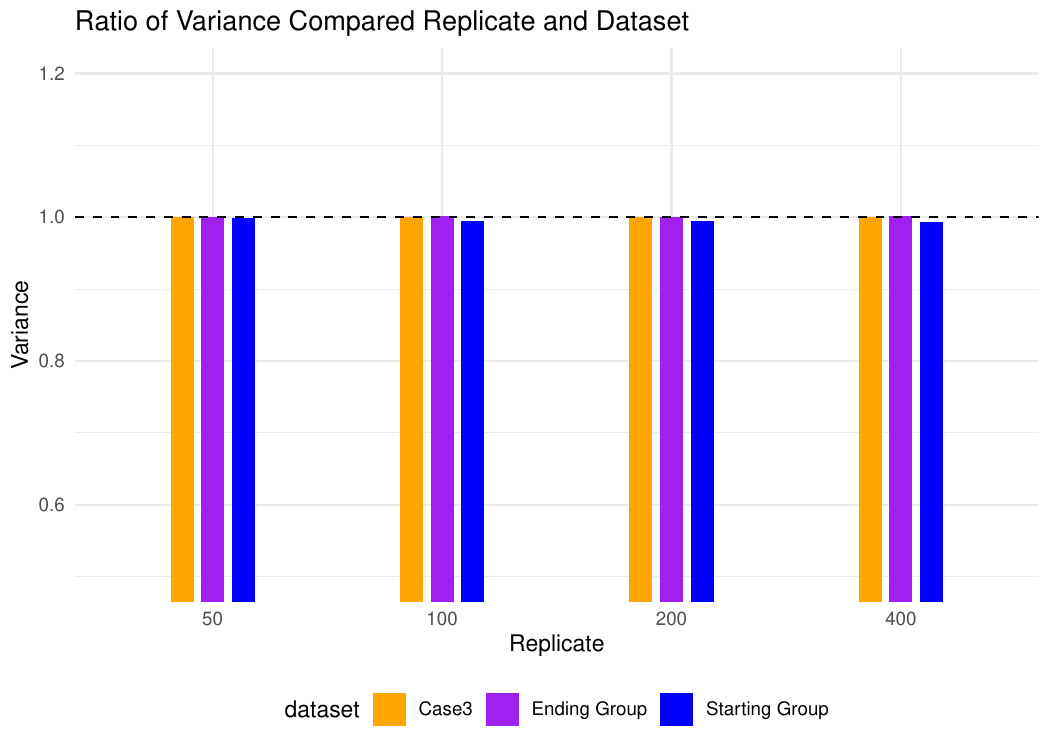} 
\caption{Case (ii)}
\label{var-hist-Learning-case(ii)}
\end{minipage}\hfill
\begin{minipage}{.3\textwidth}
\centering
\includegraphics[height=3.6cm]{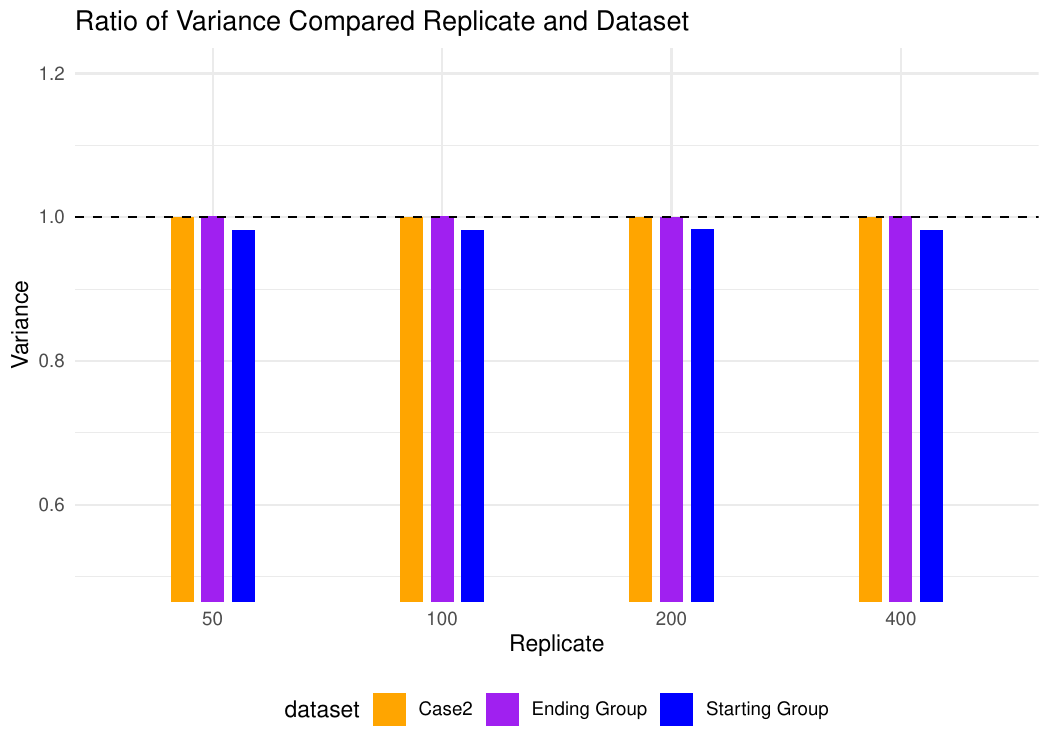} 
\caption{Case (iii)}
\label{var-hist-Learning-case(iii)}
\end{minipage}
\end{figure}

\section{Data Analysis}
\label{sec-da}

\subsection{Polymerase chain reaction experiments}
We return to the PCR experiments discussed in Section \ref{sec-intro}. 
We let $Z_{l,j} $ denote the number of target DNA molecules in the $j^{\text{th}}$ well after $ l$ cycles. Following the ideas in \cite{Bret-Vidya2011} and \cite{Li-Vidya2019}, we assume the relationship between the fluorescence intensity $F_{n,j}$ and the number of target DNA molecules $Z_{n,j}$ follows the formula $F_{n,j}=cZ_{n,j} $, where $c$ depends on the amplicon size and the calibration factor, which represents the number of nanograms of double-stranded DNA per fluorescence unit. Since thermal cycling may either lead to duplication or not, and this depends on several experimental factors, we model it as a Bernoulli distribution whose success probability in the $l^{\text{th}}$ cycle of the $j^{\text{th}}$ well $p_{l,j}$ as a random variable. Hence, the PCR evolution is modeled using a BPRE with conditional offspring distribution $ \bm{p}( \cdot|\zeta_{n,j})$, which accounts for both within and between reaction variability.

We analyze two experimental datasets from \cite{Bret-Vidya2011}. In both experiments, the true value of the relative quantitation is known because the calibrator is obtained by diluting the target by a known factor $R$.  Using Theorem \ref{relative-theorem}, we estimate $R$ by $\hat{R}_{n} $ defined in (\ref{r_est}) and its variance by
\begin{align}
 \hat{ \Lambda}_{R,\tau,n,J} = \frac{1}{J} \hat{R}_{n}^2 \left( \frac{\hat{ \Lambda}_{T,\tau,n,J} }{\hat{m}_{A,\tau,n,J,T}^2 }+   \frac{\hat{ \Lambda}_{C,\tau,n,J}}{\hat{m}_{A,\tau,n,J,C}^2}   \right),
\end{align}
where $\hat{\Lambda}_{T,\tau,n,J}  $ and $ \hat{\Lambda}_{C,\tau,n,J}$ are determined by ($\ref{e_lambda_taunJ}$).

\subsubsection{LSTAR Model}
An alternative approach to quantitation has been suggested in a recent work (\cite{Hsu-Sherina20}) and is referred to as the Logistic Smooth Transition Auto-regressive (LSTAR) model. We compare our approach with the LSTAR method. The basic idea behind the LSTAR approach is to incorporate different auto-regressive structures to represent distinct regimes within a PCR. This is achieved using a lagged time series. The model aims to smoothly transition between regimes using a logistic function as the transition mechanism. However, the model does not provide explicit estimates of the parameters of the distribution of an initial number of DNA molecules.

The class of two-regime LSTAR models is given by:
\begin{align*}
    y_t= \bm{\rho}_1 \bm{X}_t (1-G(q_t,\gamma_L,c_L))+ \bm{\rho}_2  \bm{X}_t G(q_t,\gamma_L,c_L)+\epsilon_t, \quad{\text{where}}
\end{align*}
$\bm{X}_t=(1,y_{t-1},\ldots,y_{t-p})$ and for $i=1,2$, $\bm{\rho}_i= (\rho_{i,0},\rho_{i,1},\ldots,\rho_{i,p})$. The transitioning logistic function is given by
\begin{align*}
    G(q_t,\gamma_L,c_L) =\frac{1}{1+\exp(-\gamma_L (q_t-c_L )}.
\end{align*}
The quantity $p$ represents the order of the auto-regression and $q_t$ represents the threshold variable, which allows for a gradual transition between regimens one and two. The threshold value, $c_L $, is the point of equality between the lower and upper regimes. $\gamma_L$ will specify how abruptly the switch between the two regimes occurs at $q_t=c$. The details of how these parameters are determined are described in Section 2.3 of \cite{Hsu-Sherina20}.

Turning to our dataset, we denote the fluorescence intensity of the $t^{\text{th}}$ cycle by $y_t$.  
For our analysis, we use the model:
\begin{align*}
     y_t=(\rho_{1,0}+\rho_{1,1} y_{t-1}+\rho_{1,2} y_{t-2}  ) + (\rho_{2,0}+\rho_{2,1} y_{t-1}+\rho_{2,2} y_{t-2}  )  G(y_{t-1},\gamma_L,c_L)+\epsilon_t,
\end{align*}
which we fit using the concentrated least squares methodology proposed in \cite{Leybourne1998}. We use the \textit{tsDyn} pacakge in R (\cite{tsDyn2009}).
Using the fitted model, we obtain estimators for $y_t$, denoted by $\hat{y}_t $ for $ t_1 \le t \le t_2 $. For $t< t_1 $, we can use the model again to estimate the $ y_t$ with the following function:
\begin{align*}
    \hat{y}_t= \frac{\hat{y}_{t+2}- (\hat{\rho}_{1,0}+\hat{\rho}_{1,1} \hat{y}_{t+1}) - (\hat{\rho}_{2,0}+\hat{\rho}_{2,1}  \hat{y}_{t+1})G(\hat{y}_{t+1},\hat{\gamma_L},\hat{c_L})}{ \hat{\rho}_{1,2} +  \hat{\rho}_{2,2} G(\hat{y}_{t+1},\hat{\gamma_L},\hat{c_L}) } 
\end{align*}
For PCR quantitation, we iteratively apply the above estimation method until $y_0$ is estimated. In the following sections, we use methods from the paper and the LSTAR method to analyze the aforementioned datasets.

\subsubsection{Luteinizing Hormone}
The first dataset is from a qPCR experiment of target material from Luteinizing Hormone taken from a mouse pituitary gland (\cite{TIEN2005105}). The datasets consist of 16 replicates of two dilution sets denoted as LH1 and LH2, where the LH2 product was obtained by diluting the LH1 product by a factor of $R=2.9505$. For our method, to specify the starting cycles $\tau_{1,j} $ and ending cycles $\tau_{2,j} $ of the exponential phase for every replicate, we use the algorithm that sets $F^*=0.2 $, $ m_c=1.5$ from section (6.1) in \cite{Bret-Vidya2011}. We refer to the result using GW process model as ``GW". One replicate of each set is not included for not reaching the detectable level (See Appendix \ref{apd-A}).

For the LSTAR model, we use only the positive fluorescence intensity data.  Additionally, since the duplication rate of DNA molecules can at most be 2, we fit the model using data collected after the growth rate has decreased to 2 or below.
Furthermore, since the estimated $y_t$ can be used, our method can also be applied to this estimate to assess any improvements. We refer to the result of this method in the tables as "LSTAR+BPRE." The results are summarized in Table \ref{LH-tab-lstar} and Table \ref{LH-tab-lstar-mo} below.

\begin{table}[H]
\centering
\begin{tabular}{|c|c|c|}
\hline  
&  Estimator  & CI \\
\hline  
Truth   & 2.95  & - \\
\hline  
   $\hat{R}_{n}$ & 2.9328 &(2.4947, 3.3708)   \\
\hline                  
LSTAR  &     4.459    & -   \\
\hline  
LSTAR+BPRE & 23.322       &  (18.656, 27.988)  \\
\hline
GW & 2.8221   & (1.8950, 3.7493) \\
\hline  
\end{tabular}
\caption{Comparison with LSTAR for Luteinizing Hormone (LH) experiment}
\label{LH-tab-lstar}
\end{table}

\begin{table}[H]
\centering
\begin{tabular}{|c|c|c|}
\hline  
   &   $\hat{m}_{n}$ (SE) & LSTAR (SE)\\
\hline
Group 1  & 1.760 (0.0221) & 1.144(0.0596) \\
\hline  
Group 2  & 1.764 (0.0184) &1.247(0.0675) \\
\hline
\end{tabular}
    \caption{Estimator for $m^* $ for Luteinizing Hormone (LH) experiment}
    \label{LH-tab-lstar-mo}
\end{table}

\subsubsection{Strongylus Vulgaris}
In the second dataset, the target material is Strongylus vulgaris (S.vulgaris) ribosomal DNA (\cite{NIELSEN2008443}), which consists of 10 replicates of two dilution sets, denoted as SV1 and SV2. The SV2 product was obtained by diluting the SV1 product by a factor of $R=10$. Using section (4.1.2) in \cite{Li-Vidya2019}, we set  $m_c=1.55 $ to determine the starting and ending cycles of the exponential phase.

For the LSTAR model, we will use the entire dataset because the positive portion alone is insufficient to adequately fit the model. This approach may result in negative fitted values for some replicates. To address this issue, we will disregard the negative fitted values for each replicate and focus only on the positive fitted values for quantitation. The results of our analysis are summarized below in Table \ref{SV-tab-lstar} and Table \ref{SV-tab-lstar-mo}.

\begin{table}[H]
\centering
\begin{tabular}{|c|c|c|}
\hline
   &  Estimator & CI \\
  \hline  
Truth & 10 &- \\
\hline 
   $\hat{R}_{n}$ &10.2180 & (8.995, 11.441)   \\
\hline                  
LSTAR  &     0.368    & -   \\
\hline  
LSTAR+BPRE & 1.154       &  (1.068 1.240)  \\
\hline  
GW & 11.8033 &  (-0.524, 24.130)\\
\hline  
\end{tabular}
    \caption{Comparison with LSTAR for Strongylus Vulgaris (SV) experiment}
    \label{SV-tab-lstar}
\end{table}

\begin{table}[H]
\centering
\begin{tabular}{|c|c|c|}
\hline  
   &   $\hat{m}_{n}$ (SE) & LSTAR (SE)\\
\hline
Group 1  & 1.672 (0.0178) & 1.416 (0.0786) \\
\hline  
Group 2  & 1.716 (0.0165) & 1.403 (0.1263) \\
\hline
\end{tabular}
    \caption{Estimator for $m^* $ for Strongylus Vulgaris (SV) experiment}
    \label{SV-tab-lstar-mo}
\end{table}

\subsubsection{PCR Data Analysis Discussion}
From the above results summary, we see that our model outperforms the LSTAR model. One potential reason is that the methodology used in the LSTAR model relies on fitting the model based on observed data and then extrapolating back to the ancestor. However, this method can lead to biased estimations if the observed dataset is unreliable, such as data from the initial phase. 
Moreover, when our method uses the fitted values from the LSTAR model, it tends to underestimate the marginal offspring mean because the LSTAR model includes data from the plateau phase, where the reaction's efficiency decreases. Another advantage of our method is that it provides a confidence interval for the estimator, offering insight into the precision and reliability of our estimation.

One challenge we encounter when fitting the LSTAR model is that, since the \textit{tsDyn} package in R, our only available tool, does not specifically cater to PCR quantitation, the algorithm does not always yield results for our dataset. Additionally, the fitted LSTAR model cannot guarantee that the estimated initial fluorescence intensity will be positive, thus failing to provide quantitation results for some replicates.

\subsection{COVID-19}
We first consider the COVID-19 dataset of two groups from Virginia, where each group consists of 8 counties (Figure \ref{gpp_plot}). 
As previously mentioned, only data from the stable growth period is considered. Therefore, we identify the starting point $\tau_j$ and endpoint $n_j$ of this stable growth period for each county based on the available data. We only utilize data after the accumulated cases exceed a threshold, $T_1$. However, for counties with large populations, the growth rate after this threshold might still be irregular,  so we ensure that the growth rate drops below $r_1$ after exceeding the threshold. This verification guarantees that cases from prior weeks have been resolved. To determine the end of the stable growth stage, when the growth rate falls below  $r_2$. For both analyses, we have $T_1=100 $, $r_1=2 $, and $r_2=1.05 $.
 In Table \ref{VA-pop}, Pop represents the population size considered for analysis and the Mean Pop is the average population in the particular group.$\hat{m}_n$ represents the marginal rate of growth of COVID-19 infections, and $\hat{m}_{A,n}$ represents the estimated initial number of subjects infected with COVID-19.
\begin{table}[H]
    \centering
    \begin{tabular}{|c|c|c|c|c|c|}
    \hline
         & Counties & Pop  & Mean Pop  &  $\hat{m}_n$ (SE) & $\hat{m}_{A,n}$ (SE) \\ \hline
       Group 1 & 8 & 89k - 1.1m & 98k  & 1.1788 (0.0110) & 24.7939 (0.1709) \\ \hline
       Group 2 & 8 & 37k - 41k  & 38.5k  & 1.1710 (0.0176) & 8.3633 (0.2424) \\ \hline
    \end{tabular}
    \caption{Virginia Population}
    \label{VA-pop}
\end{table}

It seems reasonable to assume that areas with higher population densities would have more potentially infecting subjects. Therefore, we compare the mean number of ancestral subjects in the group with the larger population to that in the group with the smaller population within each state. The results of this analysis are summarized in Table \ref{VA-ana}.

\begin{table}[H]
    \centering
    \begin{tabular}{|c|c|c|c|}
      \hline
       State  &    Population Ratio  & Relative Quantitation  &  CI bound      \\ \hline
       VA & 2.5436 & 2.9646 &   (1.2412 , 4.6880  )    \\ \hline
    \end{tabular}
    \caption{Virginia analysis}
    \label{VA-ana}
\end{table}
We repeat this analysis for the COVID-19 dataset from Maryland. The results of this analysis are summarized in Table \ref{MD-pop} and \ref{MD-ana}.

\begin{table}[H]
    \centering
    \begin{tabular}{|c|c|c|c|c|c|}
    \hline
         & Counties & Pop  & Mean Pop  &  $\hat{m}_n$ (SE) & $\hat{m}_{A,n}$ (SE) \\ \hline
       Group 1 & 11 & 150k - 1m & 480k  & 1.25525 (0.0178) & 199.70656 (0.3660) \\ \hline
       Group 2 & 10 & 30k - 120k  & 69k  & 1.197068 (0.0221) & 28.799474 (0.2627) \\ \hline
    \end{tabular}
    \caption{Maryland Population}
    \label{MD-pop}
\end{table}

\begin{table}[H]
    \centering
    \begin{tabular}{|c|c|c|c|}
      \hline
       State  &    Population Ratio  & Relative quantitation  &  CI bound      \\ \hline
       MD & 6.9805 &  6.93438 &   (0.81187, 13.05690  )    \\ \hline
    \end{tabular}
    \caption{Maryland analysis}
    \label{MD-ana}
\end{table}
From the analysis of two COVID-19 datasets, we note that areas with higher population densities had larger numbers of potentially infecting subjects during the early stages of the pandemic. This could be due to several factors; for example, counties with large populations may have more crowded gatherings, facilitating the spread of infection, especially before anti-epidemic measures were implemented.

We also include an analysis for comparing data from two states by analyzing counties with similar populations (90k–300k). Specifically, we examine the spreading rate and the mean of ancestral subjects to identify potential differences between the states. The results of this analysis are summarized in Table \ref{MD-VA-pop} and \ref{MD-VA-ana}.
\begin{table}[H]
    \centering
    \begin{tabular}{|c|c|c|c|c|c|}
    \hline
         & Counties & Pop  & Mean Pop  &  $\hat{m}_n$ (SE) & $\hat{m}_{A,n}$  (SE) \\ \hline
      MD Group & 9 & 92k - 260k &  157k &  1.2391 (0.0209) & 47.6166 (0.1487) \\ \hline
      VA Group & 16 & 92k - 245k  & 151k  & 1.2022 (0.0095) & 43.5927 (0.2989) \\ \hline
    \end{tabular}
    \caption{Two States Population}
    \label{MD-VA-pop}
\end{table}
\begin{table}[H]
    \centering
    \begin{tabular}{|c|c|c|c|}
      \hline
       State  &    Population Ratio  & Relative quantitation  &  CI bound      \\ \hline
       MD/VA & 1.039175 &  1.092308 &   (0.377669 1.806947  )    \\ \hline
    \end{tabular}
    \caption{Two States analysis}
    \label{MD-VA-ana}
\end{table}

The results show that Virginia generally had a lower growth rate in cases and slightly fewer ancestral subjects. This may be attributed to Virginia implementing stricter and more effective measures during the first year of the pandemic.
\section{Discussion}
\label{sec-dis}

In this paper, we discuss the theoretical findings and real-world applications of the replicated BPRE model. We demonstrate that, under appropriate conditions, the number of replicates and generations diverges to infinity, and our estimators of offspring mean and ancestral mean asymptotically converge to two uncorrelated normal distributions. The numerical experiment results illustrate that the proposed model and method yield substantial improvements under different settings of ancestral and offspring distributions.

A key component of the proposed model is that it accounts for both between and within reaction variability in PCR experiments. In the context of PCR experiments, it yields substantial improvements compared to other existing methods.
The method can be applied to other situations, as illustrated by our application to COVID-19 evolution.  

A natural extension is to extend our approach to the BPRE model with immigration, given that the growth in the number of cases includes both internal infections and imported cases. Overall, our methods show promise for routine use in applications requiring ancestral inference.

\newpage


\begin{thebibliography}{}

\bibitem[Atanasov et~al., 2021]{Atanasov2021}
Atanasov, D., Stoimenova, V., and Yanev, N.~M. (2021).
\newblock Branching process modelling of covid-19 pandemic including immunity and vaccination.
\newblock {\em Stochastics and Quality Control}, 36(2):157--164.

\bibitem[Athreya and Ney, 1972]{athreya2004branching}
Athreya, K.~B. and Ney, P.~E. (1972).
\newblock {\em Branching processes}, volume Band 196 of {\em Die Grundlehren der mathematischen Wissenschaften}.
\newblock Springer-Verlag, New York-Heidelberg.

\bibitem[Chow and Teicher, 1988]{Chow-Ti1978}
Chow, Y.~S. and Teicher, H. (1988).
\newblock {\em Probability Theory: Independence, Interchangeability, Martingales}.
\newblock Springer texts in statistics. Springer.

\bibitem[Curry et~al., 2002]{Curry2002FactorsIR}
Curry, J.~D., Mchale, C., and Smith, M.~T. (2002).
\newblock Factors influencing real-time rt-pcr results: Application of real-time rt-pcr for the detection of leukemia translocations.
\newblock {\em Molecular Biology Today}, 3:79--84.

\bibitem[{Fabio Di Narzo} et~al., 2009]{tsDyn2009}
{Fabio Di Narzo}, A., Aznarte, J., and Stigler, M. (2009).
\newblock {\em tsDyn: Time series analysis based on dynamical systems theory}.
\newblock R package version 0.7.

\bibitem[Follestad et~al., 2010]{Follestad2010}
Follestad, T., Jørstad, T.~S., Erlandsen, S.~E., Sandvik, A.~K., Bones, A.~M., and Langaas, M. (2010).
\newblock A bayesian hierarchical model for quantitative real-time pcr data.
\newblock {\em Statistical Applications in Genetics and Molecular Biology}, 9(1):3.

\bibitem[Francisci and Vidyashankar, 2024]{Fran-Vidya2024}
Francisci, G. and Vidyashankar, A.~N. (2024).
\newblock Branching processes in random environments with thresholds.
\newblock {\em Advances in Applied Probability}, 56(2):495–544.

\bibitem[Goll et~al., 2006]{Goll2006}
Goll, R., Olsen, T., Cui, G., and Florholmen, J. (2006).
\newblock Evaluation of absolute quantitation by nonlinear regression in probe-based real-time pcr.
\newblock {\em BMC Bioinformatics}, 7:107 -- 107.

\bibitem[Grama et~al., 2017]{Grama-Liu2017}
Grama, I., Liu, Q., and Miqueu, E. (2017).
\newblock Harmonic moments and large deviations for a supercritical branching process in a random environment.
\newblock {\em Electron. J. Probab.}, 22:1 -- 23.

\bibitem[Hall and Heyde, 1980]{B-Hall-Heyde1980}
Hall, P. and Heyde, C.~C. (1980).
\newblock {\em Martingale limit theory and its application}.
\newblock Probability and Mathematical Statistics. Academic Press, Inc. [Harcourt Brace Jovanovich, Publishers], New York-London.

\bibitem[Hanlon, 2009]{hanlon2009contributions}
Hanlon, B. (2009).
\newblock {\em Contributions To Ancestral Inference For Supercritical Branching Processes And High-Dimensional Data Analysis}.
\newblock PhD thesis, Cornell University.

\bibitem[Hanlon and Vidyashankar, 2011]{Bret-Vidya2011}
Hanlon, B. and Vidyashankar, A.~N. (2011).
\newblock Inference for quantitation parameters in polymerase chain reactions via branching processes with random effects.
\newblock {\em J. Amer. Statist. Assoc.}, 106(494):525--533.

\bibitem[Hsu et~al., 2020]{Hsu-Sherina20}
Hsu, B., Sherina, V., and McCall, M.~N. (2020).
\newblock {Autoregressive modeling and diagnostics for qPCR amplification}.
\newblock {\em Bioinformatics}, 36(22-23):5386--5391.

\bibitem[Huang and Liu, 2012]{Huang-Liu2012}
Huang, C. and Liu, Q. (2012).
\newblock Moments, moderate and large deviations for a branching process in a random environment.
\newblock {\em Stochastic Process. Appl.}, 122(2):522--545.

\bibitem[Huang et~al., 2023]{Huang-Wang2023}
Huang, C., Wang, C., and Wang, X. (2023).
\newblock Moments and asymptotic properties for supercritical branching processes with immigration in random environments.
\newblock {\em Stoch. Models}, 39(1):21--40.

\bibitem[Kersting and Vatutin, 2017]{Kersting2017DiscreteTB}
Kersting, G. and Vatutin, V. (2017).
\newblock {\em Discrete Time Branching Processes in Random Environment}.
\newblock Dover Publications.

\bibitem[Lalam, 2009]{L09A}
Lalam, N. (2009).
\newblock A quantitative approach for polymerase chain reactions based on a hidden {M}arkov model.
\newblock {\em Journal of Mathematical Biology}, 59(4):517--533.

\bibitem[Leybourne et~al., 1998]{Leybourne1998}
Leybourne, S., Newbold, P., and Vougas, D. (1998).
\newblock Unit roots and smooth transitions.
\newblock {\em Journal of Time Series Analysis}, 19(1):83--97.

\bibitem[Livak and Schmittgen, 2001]{LIVAK2001402}
Livak, K.~J. and Schmittgen, T.~D. (2001).
\newblock Analysis of relative gene expression data using real-time quantitative pcr and the 2[delta][delta]ct method.
\newblock {\em Methods}, 25(4):402--408.

\bibitem[Nielsen et~al., 2008]{NIELSEN2008443}
Nielsen, M.~K., Peterson, D.~S., Monrad, J., Thamsborg, S.~M., Olsen, S.~N., and Kaplan, R.~M. (2008).
\newblock Detection and semi-quantification of strongylus vulgaris dna in equine faeces by real-time quantitative pcr.
\newblock {\em International Journal for Parasitology}, 38(3):443--453.

\bibitem[Saha et~al., 2007]{Saha2007}
Saha, N., Watson, L., Kafadar, K., Ramakrishnan, N., Onufriev, A., Mane, S., and Vasquez-Robinet, C. (2007).
\newblock Validation and estimation of parameters for a general probabilistic model of the pcr process.
\newblock {\em Journal of computational biology : a journal of computational molecular cell biology}, 14:97--112.

\bibitem[Tanny, 1977]{Tanny1977}
Tanny, D. (1977).
\newblock Limit theorems for branching processes in a random environment.
\newblock {\em The Annals of Probability}, 5(1):100--116.

\bibitem[Tien et~al., 2005]{TIEN2005105}
Tien, J.~H., Lyles, D., and Zeeman, M.~L. (2005).
\newblock A potential role of modulating inositol 1,4,5-trisphosphate receptor desensitization and recovery rates in regulating ovulation.
\newblock {\em Journal of Theoretical Biology}, 232(1):105--117.

\bibitem[Vidyashankar and Li, 2019]{Li-Vidya2019}
Vidyashankar, A.~N. and Li, L. (2019).
\newblock Ancestral inference for branching processes in random environments and an application to polymerase chain reaction.
\newblock {\em Stoch. Models}, 35(3):318--337.

\bibitem[Yanev et~al., 2020]{Yanev2020}
Yanev, N., Stoimenova, V., and Atanasov, D. (2020).
\newblock Stochastic modeling and estimation of covid-19 population dynamics.
\newblock {\em arXiv: Methodology}.

\end{thebibliography}

\newpage

\appendix
\appendixpage

\section{Starting and Ending Cycles for data anlyasis} \label{apd-A}

\subsection{Luteinizing Hormone data}
\begin{table}[H]
\centering
\begin{tabular}{|c|ccccccccccccccc |}
\hline 
j &  1 & 2 & 3 & 4 & 5 & 6 & 7 & 8 & 9 & 10 & 11 & 12 & 13  & 14 & 15 \\
\hline
LH1  $\tau_{1,j}$  &  18 &  19 &  19  & 19  & 18   & 19 &  18  & 18  & 18  &  19  &  18  &  19  &  18   & 19   & 19  \\    
LH1  $\tau_{2,j}$  & 21 &  22  & 22 &  22  & 22  & 22  & 22  & 22  & 21  &  22   & 22  &  22   & 22 &   22   & 22 \\
\hline \hline  j &  1 & 2 & 3 & 4 & 5 & 6 & 7 & 8 & 9 & 10 & 11 & 12 & 13  & 14 & 15 \\     
\hline
LH2  $\tau_{1,j}$ &   21  & 21 &  20 &  20  & 20 &  20 &  20  & 20  & 20 &   19  &  20  &  20  &  20 &   20  &  20 \\ 
LH2 $\tau_{2,j}$ & 24  & 23  & 23 &  23 &  23  & 23  & 24  & 23  & 24   & 23   & 24  &  24  &  24  &  24  &  24 \\
\hline
\end{tabular}
\caption{The Starting cycles and Ending cycles for LH1 and LH2}
\end{table}

\subsection{Strongylus Vulgaris data}
\begin{table}[H]
\centering
\begin{tabular}{|c|cccccccccc |}
\hline 
j &  1 & 2 & 3 & 4 & 5 & 6 & 7 & 8 & 9 & 10 \\
\hline
SV1  $\tau_{1,j}$ & 31  & 31 &  30  & 30  & 31 &  31  & 31  & 30 &  30 &   31 \\
SV1 $\tau_{2,j}$  &  33 &  32  & 32 &  32  & 32 &  32  & 32 &  32  & 32   & 32 \\ 
\hline  \hline  SV2 j &  1 & 2 & 3 & 4 & 5 & 6 & 7 & 8 & 9 & 10 \\
\hline SV1  $\tau_{1,j}$ &  34 &  33 &  33 &  33 &  33 &  34 &  34  & 33 &  34  &  34 \\ 
SV1 $\tau_{2,j}$ & 35  & 35  & 34 &  35 &  35 &  35 &  35  & 35   &35  &  35 \\
\hline
\end{tabular}
\caption{The Starting cycles and Ending cycles for SV1 and SV2}
\end{table}

\pagebreak

\section{Numerical Experiment Result Tables} \label{apd-B}
\begin{table}[H]
\begin{center}
\begin{tabular}[ht]{|c| c | c  | c | c | c | c  | c | c | } 
\hline
J & 5 & 6 & 8 & 10 & 20 & 30 & 40 & 50  \\ 
\hline 
 $\bar{m}_{A,\tau,n,J}$ & 10.075 & 10.035 & 10.016 & 9.978  & 10.015 & 9.999 & 10.009 & 10\\ 
\hline 
$\hat{\Lambda}_{\tau,n,J}$  & 2.203 & 1.859 & 1.378 & 1.111  & 0.55 & 0.367 & 0.276 & 0.219 \\ 
\hline 
$ \Lambda_{\tau,n,J}$ & 2.204 & 1.836 & 1.377 & 1.102  & 0.551 & 0.367 & 0.275  & 0.220\\ 
\hline 
G CR & 0.868 & 0.888 & 0.905 & 0.913 & 0.931 & 0.931 & 0.94 & 0.937 \\ 
\hline 
G ML & 5.445 & 5.062 & 4.429 & 4.007  & 2.866 & 2.354 & 2.044 & 1.823 \\ 
\hline 
t CR & 0.945 & 0.944 & 0.95 & 0.944  & 0.947 & 0.942 & 0.947 & 0.941\\ 
\hline 
t ML & 7.713 & 6.639 & 5.344 & 4.625  & 3.06 & 2.456 & 2.109 & 1.869\\ 
\hline 
$\hat{m}_{A,b}$ & 10.08 & 10.041 & 10.016 & 9.979 & 10.015 & 9.999 & 10.009 & 10 \\ 
\hline 
Var$_B$ & 1.816 & 1.595 & 1.244 & 1.022  & 0.538 & 0.364 & 0.276 & 0.221\\ 
\hline 
B CR & 0.827 & 0.85 & 0.884 & 0.887 & 0.918 & 0.92 & 0.931 & 0.931 \\ 
\hline 
B ML & 4.765 & 4.528 & 4.084 & 3.734 & 2.765 & 2.29 & 1.998 & 1.789 \\ 
\hline 
$\bar{Z_0}$ & 10.067 & 10.028 & 10.005 & 9.976 & 10.012 & 9.991 & 10.013 & 10  \\ 
\hline 
var$(Z_0)$  & 2.002 & 1.68 & 1.246 & 1.004  & 0.498 & 0.333 & 0.251 & 0.199  \\ 
\hline 
$(Z_0)$ G CR  & 0.875 & 0.887 & 0.912 & 0.911  & 0.93 & 0.942 & 0.947 & 0.941 \\ 
\hline 
$(Z_0)$ G ML & 5.202 & 4.823 & 4.219 & 3.815  & 2.729 & 2.242 & 1.949 & 1.738\\ 
\hline 
$(Z_0)$ t CR & 0.95 & 0.948 & 0.95 & 0.944 & 0.945 & 0.951 & 0.954 & 0.945 \\ 
\hline 
$(Z_0)$ t ML & 7.369 & 6.325 & 5.09 & 4.403 & 2.915 & 2.34 & 2.012 & 1.782  \\ 
\hline 
\end{tabular}
\end{center} 
    \caption{Summary of the results for the simulation experiment of Beta-Binomial with Poisson Ancestor}
\label{tab-PBB-matau-n=20-J<11}
\end{table}

\begin{table}[H]
\begin{center}
\begin{tabular}[H]{|c| c | c  | c | c | } 
\hline
J & 20 & 30 & 40 & 50 \\ 
\hline 
$\bar{m}_{A,\tau,n,J}$ & 10.013 & 10.007 & 10.016 & 9.998 \\ 
\hline 
$\hat{ \Lambda}_{\tau,n,J}$  & 1.689 & 1.125 & 0.845 & 0.67 \\ 
\hline 
$ \Lambda_{\tau,n,J}$  & 1.811 & 1.207 & 0.905 & 0.724 \\ 
\hline 
G CR & 0.89 & 0.898 & 0.901 & 0.9 \\ 
\hline 
G ML & 4.917 & 4.057 & 3.536 & 3.159 \\ 
\hline 
t CR & 0.907 & 0.909 & 0.906 & 0.908 \\ 
\hline 
t ML & 5.25 & 4.233 & 3.649 & 3.239 \\ 
\hline 
$\hat{m}_{A,b}$ & 10.036 & 10.021 & 10.027 & 10.007 \\ 
\hline 
Var$_B$ & 2.011 & 1.357 & 1.031 & 0.828 \\ 
\hline 
B CR & 0.911 & 0.923 & 0.926 & 0.927 \\ 
\hline 
B ML & 5.264 & 4.37 & 3.829 & 3.442 \\ 
\hline 
$\bar{Z_0}$ & 9.996 & 9.998 & 9.988 & 9.993 \\ 
\hline 
var$(Z_0)$  & 0.75 & 0.498 & 0.373 & 0.299 \\ 
\hline 
$(Z_0)$ G CR  & 0.927 & 0.933 & 0.94 & 0.94 \\ 
\hline 
$(Z_0)$ G ML & 3.323 & 2.726 & 2.369 & 2.127 \\ 
\hline 
$(Z_0)$ t CR & 0.941 & 0.945 & 0.946 & 0.946 \\ 
\hline 
$(Z_0)$ t ML & 3.549 & 2.844 & 2.444 & 2.18 \\ 
\hline 
\end{tabular}
\end{center}
    \caption{Numerical experiment result for Gamma-Poisson with Negative-Binomial ancestor}
\label{tab-NGP-matau-n=20-J<11}
\end{table}

\begin{table}[H]
\begin{center}
\begin{tabular}[H]{|c| c | c  | c | c | } 
\hline
J & 20 & 30 & 40 & 50 \\ 
\hline 
$\bar{m}_{A,\tau,n,J}$ & 10.861 & 11.593 & 9.608 & 11.627 \\ 
\hline 
$\hat{ \Lambda}_{\tau,n,J}$  & 3208.119 & 7057.371 & 2061.778 & 3727.59 \\ 
\hline 
${ \Lambda}_{\tau,n,J}$  & 218061.3 & 145374.2 & 109030.7 & 87224.53 \\ 
\hline 
G CR & 0.335 & 0.364 & 0.375 & 0.386 \\ 
\hline 
G ML & 35.013 & 37.203 & 29.259 & 36.422 \\ 
\hline 
t CR & 0.344 & 0.369 & 0.381 & 0.39 \\ 
\hline 
t ML & 37.39 & 38.821 & 30.195 & 37.344 \\ 
\hline 
$\hat{m}_{A,b}$ & 15.322 & 14.473 & 11.584 & 12.966 \\ 
\hline 
Var$_b$ & 11353.52 & 22888.1 & 6976.552 & 5264.922 \\ 
\hline 
B CR & 0.684 & 0.664 & 0.654 & 0.647 \\ 
\hline 
B ML & 80.026 & 67.887 & 48.354 & 51.055 \\ 
\hline 
$\bar{Z_0}$ & 10.002 & 9.997 & 9.993 & 9.992 \\ 
\hline 
var$(Z_0)$  & 0.501 & 0.333 & 0.249 & 0.2 \\ 
\hline 
$(Z_0)$ G CR  & 0.937 & 0.942 & 0.945 & 0.945 \\ 
\hline 
$(Z_0)$ G ML & 2.737 & 2.242 & 1.943 & 1.744 \\ 
\hline 
$(Z_0)$ t CR & 0.952 & 0.95 & 0.954 & 0.951 \\ 
\hline 
$(Z_0)$ t ML & 2.923 & 2.34 & 2.005 & 1.789 \\ 
\hline 
\end{tabular}
\end{center}
    \caption{Numerical experiment result for Gamma-Poisson with Poisson ancestor}
\label{tab-PGP-matau-n=20-J<11}
\end{table}

Here we also include some experiment results for relative quantitation. Let the ancestors generated by 1+Poisson random variables with the mean of target group $m_{A,T}=1000$ and calibrator $m_{A,C}=100 $ hence $R=10 $.
For both groups, the offspring distribution are conditionally 1+Binomial with mean $p_{l,j} $ where $p_{l,j} $'s are i.i.d. $Beta(90,10)$.
We estimate $R $ by $ \hat{R}_{n,J} $ defined in (\ref{r_est}). 
All simulation results are based on 5000 repetitions.

\begin{table}[H]
\begin{center}
\begin{tabular}[H]{|c| c | c  | c | c | } 
 \hline
J & 20 & 30 & 40 & 50 \\ 
\hline 
 $\hat{R}$ & 10.005 & 10.003 & 10.003 & 9.999 \\ 
\hline 
$\hat{ \Lambda}_{R,\tau,n,J}$  & 0.104 & 0.069 & 0.052 & 0.041 \\ 
\hline 
$\Lambda_{R,\tau,n,J}$  & 0.103 & 0.069 & 0.052 & 0.041 \\ 
\hline 
G CR & 0.928 & 0.938 & 0.942 & 0.941 \\ 
\hline 
G ML & 1.251 & 1.024 & 0.887 & 0.794 \\ 
\hline 
t CR & 0.946 & 0.947 & 0.949 & 0.947 \\ 
\hline 
t ML & 1.336 & 1.069 & 0.916 & 0.815 \\ 
\hline 
$\hat{m}_{A,b}$ & 9.748 & 10.175 & 9.729 & 9.942 \\ 
\hline 
Var$_b$ & 0.104 & 0.07 & 0.053 & 0.043 \\ 
\hline 
b CR & 0.912 & 0.926 & 0.932 & 0.93 \\ 
\hline 
b ML & 1.213 & 1.007 & 0.875 & 0.786 \\ 
\hline 
\end{tabular}
\end{center}
\caption{Numerical experiment result Relative Quantitation ($n=20 $, $\tau=10$)}
\end{table}

\begin{table}[H]
\begin{center}
\begin{tabular}[H]{|c| c | c  | c | c | } 
 \hline
J & 20 & 30 & 40 & 50 \\ 
\hline 
 $\hat{R}$ & 10.009 & 10.002 & 10.002 & 10.002 \\ 
\hline 
$\hat{ \Lambda}_{1,n,J}$  & 0.128 & 0.086 & 0.064 & 0.051 \\ 
\hline 
Asy Var  & 0.128 & 0.085 & 0.064 & 0.051 \\ 
\hline 
G CR & 0.936 & 0.941 & 0.939 & 0.943 \\ 
\hline 
G ML & 1.391 & 1.142 & 0.987 & 0.885 \\ 
\hline 
t CR & 0.952 & 0.951 & 0.946 & 0.951 \\ 
\hline 
t ML & 1.486 & 1.192 & 1.019 & 0.907 \\ 
\hline 
$\hat{m}_{A,b}$ & 9.972 & 10.038 & 10.392 & 10.168 \\ 
\hline 
Var$_b$ & 0.127 & 0.086 & 0.065 & 0.052 \\ 
\hline 
b CR & 0.918 & 0.922 & 0.93 & 0.934 \\ 
\hline 
b ML & 1.343 & 1.112 & 0.971 & 0.874 \\ 
\hline
\end{tabular}
\end{center}
\caption{Numerical experiment result Relative Quantitation ($n=30 $, $\tau=15$)}
\end{table}

Let the ancestors generated by 1+Poisson random variables with mean of target group $m_{A,T}=3000$ and calibrator $m_{A,C}=100 $ hence $R=30 $

$n=30 $, $\tau=15$:

\begin{table}[H]
\begin{center}
\begin{tabular}[H]{|c| c | c  | c | c | } 
 \hline
J & 20 & 30 & 40 & 50 \\ 
\hline 
 $\hat{R}$ & 30.014 & 29.985 & 30.002 & 30.01 \\ 
\hline 
$\hat{ \Lambda}_{R,\tau,n,J}$  & 1.117 & 0.745 & 0.561 & 0.45 \\ 
\hline 
$\Lambda_{R,\tau,n,J}$  & 1.12 & 0.747 & 0.56 & 0.448 \\ 
\hline 
G CR & 0.935 & 0.949 & 0.939 & 0.943 \\ 
\hline 
G ML & 4.108 & 3.365 & 2.923 & 2.621 \\ 
\hline 
t CR & 0.952 & 0.959 & 0.944 & 0.949 \\ 
\hline 
t ML & 4.386 & 3.511 & 3.016 & 2.687 \\ 
\hline 
$\hat{m}_{A,b}$ & 29.859 & 29.408 & 28.476 & 29.945 \\ 
\hline 
Var$_b$ & 1.11 & 0.747 & 0.57 & 0.459 \\ 
\hline 
b CR & 0.917 & 0.935 & 0.924 & 0.929 \\ 
\hline 
b ML & 3.97 & 3.282 & 2.871 & 2.578 \\ 
\hline 
\end{tabular}
\end{center}
\caption{Numerical experiment result Relative Quantitation ($n=30 $, $\tau=15$)}
\end{table}

\newpage

\section{Supplement Proofs}
\label{apd-C}

In this appendix, we provide supplementary proofs for the results stated in the main paper. 

\subsection{Supplementary proof for Theorem \ref{joint-CLT}}
\label{apd-c-joint-CLT}
We begin with a supplementary proof of Theorem \ref{joint-CLT}. Recall that $ T_{\tau_1,\tau_2,n}= \sqrt{\delta_{m}J_n}  ( \hat{m}_{\tau_1,\tau_2,n} - m^*) $ and $  U_{n,1}=\frac{1}{r^{*n}}   \frac{1}{\sqrt{J}} \sum_{j=1}^{J}   \left(\mathcal{N}_{\tau_2,n}^*\sum_{l=\tau_2}^n Z_{l,j}- m_A \right)$. The next lemma shows that their covariance converges to 0 as $n \to \infty$.

\begin{lemma}
\label{cov(TU)}
Under the condition of Theorem \ref{joint-CLT}, as $n \to \infty$,
    $ \bm{Cov}(T_{\tau_1,\tau_2,n} , U_{n,1} ) \to 0$.
\end{lemma}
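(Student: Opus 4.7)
The plan is to exploit independence across replicates together with the Markov structure of the BPRE to reduce the covariance to an explicit calculation. Writing $A_j = \sum_{l=\tau_1+1}^{\tau_2}(Z_{l,j}/Z_{l-1,j} - m^*)$ and $B_j = (\mathcal{N}_{\tau_2,n}^{*})^{-1}\sum_{l=\tau_2}^{n} Z_{l,j} - m_A$ for the per-replicate summands of $T_{\tau_1,\tau_2,n}$ and $U_{n,1}$, the i.i.d.\ structure across $j$ forces $\bm{Cov}(A_j, B_k) = 0$ whenever $j \ne k$, so one obtains $\bm{Cov}(T_{\tau_1,\tau_2,n}, U_{n,1}) = (\sqrt{\delta_m}\, r^{*n})^{-1}\, \bm{Cov}(A_1, B_1)$. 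Moreover, since $A_1$ is $\mathcal{F}_{\tau_2,1}$-measurable and $\bm{E}[\sum_{l=\tau_2}^n Z_{l,1}\mid \mathcal{F}_{\tau_2,1}] = Z_{\tau_2,1}\,(m^{*(n-\tau_2+1)}-1)/(m^*-1)$, the tower property gives $\bm{E}[B_1\mid\mathcal{F}_{\tau_2,1}] = Z_{\tau_2,1}/m^{*\tau_2} - m_A$, and hence $\bm{Cov}(A_1, B_1) = \bm{Cov}(A_1, Z_{\tau_2,1}/m^{*\tau_2})$.

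I will then decompose $A_1 = \sum_{l=\tau_1+1}^{\tau_2} Y_l + \sum_{l=\tau_1+1}^{\tau_2}(m_{l-1}-m^*)$ with $Y_l = Z_{l,1}/Z_{l-1,1} - m_{l-1}$ and handle the two pieces separately. For the conditionally-centered martingale part, the identity $\bm{E}[Z_{\tau_2}/m^{*\tau_2}\mid\mathcal{F}_l] = Z_l/m^{*l}$ combined with the direct calculation $\bm{E}[Y_l Z_l\mid\mathcal{F}_{l-1}] = \gamma^{*2}$ (by first conditioning on $(\mathcal{F}_{l-1}, \zeta_{l-1})$ and using the conditional mean and variance of $Z_l/Z_{l-1}$ given the environment) yields $\bm{Cov}(Y_l, Z_{\tau_2}/m^{*\tau_2}) = \gamma^{*2}/m^{*l}$. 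Summing a geometric series then bounds this contribution by $O(m^{*-\tau_1})$, which is uniformly bounded in $n$.

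The main obstacle is the environmental part, because $m_{l-1}$ and $Z_{\tau_2}$ remain correlated through the persistent effect of the early environment on the later population size, and a crude Cauchy--Schwarz bound using $\bm{Var}(Z_{\tau_2}/m^{*\tau_2}) = O(r^{*2\tau_2})$ would produce an unwanted factor $r^{*\tau_2}$ that does not decay fast enough relative to $r^{*n}$ when $n-\tau_2$ stays bounded. Instead, I compute $\bm{Cov}(m_{l-1}, Z_{\tau_2}/m^{*\tau_2})$ directly: conditioning first on $\mathcal{F}_l$ reduces it to $\bm{E}[m_{l-1}Z_l]/m^{*l}$, and a second conditioning on $(\mathcal{F}_{l-1},\zeta_{l-1})$ together with the independence of $\zeta_{l-1}$ from $\mathcal{F}_{l-1}$ and the formula $\bm{E}[Z_l\mid\mathcal{F}_{l-1},\zeta_{l-1}] = Z_{l-1}m_{l-1}$ produces the explicit value $m_A m^*(r^{*2}-1)$, a constant independent of both $l$ and $\tau_2$. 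Summing the $\delta_m$ such terms therefore contributes $O(\delta_m)$, so $|\bm{Cov}(A_1, B_1)| = O(\delta_m)$, and finally $|\bm{Cov}(T_{\tau_1,\tau_2,n}, U_{n,1})| = O\!\left(\sqrt{\delta_m}/r^{*n}\right) \to 0$, since $\delta_m \le n-\tau_{1,n} \le n$ while $r^{*n}$ diverges exponentially under $r^* > 1$ (which holds under \ref{AssumA2} whenever the environment is genuinely random, by Jensen's inequality).
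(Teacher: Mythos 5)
Your proposal is correct and is in substance the paper's own proof: both reduce, via independence across replicates, to the single-replicate covariance scaled by $(\sqrt{\delta_m}\,r^{*n})^{-1}$, and both evaluate that covariance exactly through the same conditional-moment identities (your $\bm{E}[Y_l Z_l\mid\mathcal{F}_{l-1}]=\gamma^{2*}$ and $\bm{E}[m_{l-1}Z_l]=m_{2}^*m^{*(l-1)}m_A$ are precisely the ingredients of the paper's formula $\bm{E}[Z_n^2/Z_{n-1}]=\gamma^{2*}+m_{2}^*m^{*(n-1)}m_A$), yielding the same two contributions $\gamma^{2*}\mathcal{M}^*_{\tau_1,\tau_2}/(r^{*n}\sqrt{\delta_m})$ and $m_A\sigma^{2*}\sqrt{\delta_m}/(m^*r^{*n})$, which vanish since $r^*>1$ and $\delta_m\le n$. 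The only difference is bookkeeping: you first project $B_1$ onto $\mathcal{F}_{\tau_2}$ and split $A_1$ into martingale and environmental parts, while the paper computes the raw cross-moments $\bm{E}[(Z_{l_1}/Z_{l_1-1})Z_{l_2}]$ over the double sum and subtracts the product of means.
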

\begin{proof}  
For simplicity, we denote $(\hat{\mathcal{N}}_{\tau_1,\tau_2,n} , T_{\tau_1,\tau_2,n} , S_{\tau_1,\tau_2,n} )$ as $( \hat{\mathcal{N}}_{n} ,T_{\tau_1,n} , S_{\tau_1,n} )$ if otherwise not specified. First, consider $\bm{Cov}(T_{\tau_1,n} , U_{n,1} ) $, by replicates independence,
\begin{align}
    \bm{Cov}(T_{\tau_1,n} , U_{n,1} )   
   &=\frac{1}{J_nr^{*n} \sqrt{\delta_{m}} } \bm{Cov} \left[ \sum_{j=1}^{J_n}   \sum_{l=\tau_1+1}^{\tau_2} (\frac{Z_{l,j}}{Z_{l-1,j}} - m^*) , \sum_{j=1}^{J_n}  \left(  \frac{1}{\mathcal{N}_{\tau_2,n}^*} \sum_{l=\tau_2}^{n}   Z_{l,j}-m_A \right)\right] \nonumber \\
   &= \frac{1 }{r^{*n}\sqrt{\delta_{m}}} \bm{Cov} \left[   \sum_{l=\tau_1+1}^{\tau_2} (\frac{Z_{l,1}}{Z_{l-1,1}} - m^*) ,  \left(  \frac{1}{\mathcal{N}_{\tau_2,n}^*}\sum_{l=\tau_2}^{n}   Z_{l,1}-m_A \right)\right] \nonumber \\
      &=\frac{ 1 }{\mathcal{N}_{\tau_2,n}^*r^{*n}\sqrt{\delta_{m}}} \bm{E} \left[\left(   \sum_{l=\tau_1+1}^{\tau_2} \frac{Z_{l,1}}{Z_{l-1,1}}  \right)  \left( \sum_{l=\tau_2}^{n}   Z_{l,1} \right)\right] - \frac{  \sqrt{\delta_{m}} m^* m_A }{r^{*n}} .  \label{cov(TU)-p1}
\end{align}
Now using $r^*>1 $, the second term on the RHS of (\ref{cov(TU)-p1}) above converges to 0. To calculate the first term, notice that
\begin{align*}
    \bm{E} \left[\left(   \frac{Z_{n}^2}{Z_{n-1}}  \right) \right] 
    &=\bm{E} \left[ \bm{E} \left(  \frac{Z_n^2}{Z_{n-1}}    \middle| \mathcal{F}_{n-1} \right) \right]  = \bm{E} \left[ \frac{1}{Z_{n-1}}\bm{E} \left( (\sum_{k=1}^{Z_{n-1}} \xi_{n-1,k} )^2    \middle| \mathcal{F}_{n-1} \right) \right] \\
    &= \bm{E} \left[ \frac{1}{Z_{n-1}}\bm{E} \left( \sum_{k=1}^{Z_{n-1}} \xi_{n-1,k} ^2 + 2\sum_{1 \le k_1 < k_2 \le Z_{n-1}} \xi_{n-1,k_1} \xi_{n-1,k_2}    \middle| \mathcal{F}_{n-1} \right) \right] \\
    &=\bm{E} \left[  \sigma^2_{n-1,1}  + Z_{n-1}  m_{n-1,1}^2\right]
    = \gamma^{2*}+ m_{2}^*m^{*(n-1)}m_A.
\end{align*}
Follow the same logic,
\begin{align}
    \bm{E} \left[\left(   \frac{Z_{n}}{Z_{n-1}}  \right)  \left( Z_{n+k}\right)\right] =m^{*k} (  \gamma^{2*}+ m_{2}^*m^{*(n-1)}m_A ) \label{cov(TU)-p2}
\end{align}
Now let $\mathcal{M}_{\tau_1,\tau_2}^* = \sum_{l_1=\tau_1+1}^{\tau_2} m^{-*l_1} $ and recall that $\delta_m \coloneqq \delta_{m,n}=\tau_{2,n}-\tau_{1,n} $, for the first term on the RHS of (\ref{cov(TU)-p1}), we have
\begin{align*}
  & \quad \frac{1 }{ \mathcal{N}_{\tau_2,n}^* r^{*n}\sqrt{\delta_{m}}} \bm{E} \left[\left(   \sum_{l_1=\tau_1+1}^{\tau_2} \frac{Z_{l_1,1}}{Z_{l_1-1,1}}  \right)  \left( \sum_{l_2=\tau_2}^{n}   Z_{l_2,1} \right)\right] \\
    &=  \frac{1 }{ \mathcal{N}_{\tau_2,n}^* r^{*n}\sqrt{\delta_{m}}} \sum_{l_1=\tau_1+1}^{\tau_2} \sum_{l_2=\tau_2}^{n} \bm{E} \left[\left(  \frac{Z_{l_1,1}}{Z_{l_1-1,1}}  \right)  \left(    Z_{l_2,1} \right)\right]   \\
     &= \frac{1 }{ \mathcal{N}_{\tau_2,n}^* r^{*n}\sqrt{\delta_{m}}} \sum_{l_1=\tau_1+1}^{\tau_2} \sum_{l_2=\tau_2}^{n}  m^{*(l_2-l_1)} (  \gamma^{2*}+ m_{2}^*m_{o}^{*(l_1-1)}m_A ), ~~\text{by using (\ref{cov(TU)-p2})} \\
    &= \frac{1 }{ \mathcal{N}_{\tau_2,n}^* r^{*n}\sqrt{\delta_{m}}} \left( \sum_{l_1=\tau_1+1}^{\tau_2} \sum_{l_2=\tau_2}^{n}  m^{*(l_2-l_1)}  \gamma^{2*} +  \sum_{l_1=\tau_1+1}^{\tau_2} \sum_{l_2=\tau_2}^{n} m^{*(l_2-1)}m_{2}^*m_A \right)\\
    &= \frac{1 }{ \mathcal{N}_{\tau_2,n}^* r^{*n}\sqrt{\delta_{m}}}  \left( \gamma^{2*} \sum_{l_1=\tau_1+1}^{\tau_2} m^{-*l_1} \sum_{l_2=\tau_2}^{n}  m^{*l_2}   + \frac{1}{m^*} (\tau_2-\tau_1)m_{2}^*m_A \sum_{l_2=\tau_2}^{n} m^{*l_2} \right)\\
    &= \frac{1 }{ \mathcal{N}_{\tau_2,n}^* r^{*n}\sqrt{\delta_{m}}}  \mathcal{N}_{\tau_2,n}^{*} ( \gamma^{2*} \mathcal{M}_{\tau_1,\tau_2}^*   + \frac{\delta_{m} m_{2}^* m_A}{m^*}  ), ~~ \text{ by definition of }  \mathcal{N}_{\tau_2,n}^* \text{ and } \mathcal{M}_{\tau_1,\tau_2}^* \\
     &=  \frac{ 1 }{ r^{*n}\sqrt{\delta_{m}} } ( \gamma^{2*}   \mathcal{M}_{\tau_1,\tau_2}^*   + \frac{\delta_{m} m_{2}^* m_A }{m^*}  ) \\
    &=  \frac{ \gamma^{2*}  \mathcal{M}_{\tau_1,\tau_2}^* }{ r^{*n}\sqrt{\delta_{m}}  }  + \frac{m_{2}^* m_A}{m^*} \frac{\sqrt{\delta_{m}}}{ r^{*n}  } 
    \to 0.
\end{align*}
where the convergence above follows from $ r*>1$, $\mathcal{M}_{\tau_1,\tau_2}^* \le (m^*-1)^{-1}m^*  $, and $\delta_m < n $. This completes the proof of the lemma.

\end{proof}

\subsection{Proof for Theorem \ref{joint-CLT-2}}
\label{apd-c-joint-CLT-2}
Next, we provide a proof of Theorem \ref{joint-CLT-2}.

\begin{proof}
Similar to (\ref{joint-CLT-p1}) and (\ref{joint-CLT-p2}), define 
\begin{align*}
  &  \mathcal{N}_{\tau_1,\tau_2}^*= \frac{m^*-1   }{ m^{*\tau_1}(m^{*(\tau_2-\tau_1 +1)} -1)     } , \quad  
    T_{\tau_1,\tau_2,n}^{\prime}= \sqrt{\delta_{m}J_n}  ( \hat{m}_{\tau_1,\tau_2,n}^{\prime} - m^*) \\
  &  S_{\tau_1,\tau_2,n}^{\prime}= \frac{1}{\hat{r}^{\tau_2}} \sqrt{J_n}  ( \hat{m}_{A,\tau_1,\tau_2,n}^{\prime}  - m_A) , \quad 
    \mathcal{R}_n^{\prime} =  \frac{r^{*\tau_2}}{\hat{r}^{\tau_2}}     \frac{\hat{\mathcal{N}}_{\tau_1,\tau_2,n}^{\prime}}{\mathcal{N}_{\tau_1,\tau_2}^*} \\
  &  U_{n,1}^{\prime} = \left[\frac{1}{r^{*\tau_2}}   \frac{1}{\sqrt{J_n}} \sum_{j=1}^{J_n}   \left(\frac{1}{\mathcal{N}_{\tau_1,\tau_2}^*}\sum_{l=\tau_1}^{\tau_2} Z_{l,j}- m_A \right) \right], \quad  \text{and }
    U_{n,2}^{\prime} = m_A  \frac{r^{*\tau_2}}{\hat{r}^{\tau_2}}   \frac{1}{r^{*\tau_2}}\sqrt{J_n} \left( \frac{\hat{\mathcal{N}}_{\tau_1,\tau_2,n}^{\prime}}{\mathcal{N}_{\tau_1,\tau_2}^*}-1  \right).
\end{align*}
Then the rest of the proof follows, as before, as in the proof of Theorem \ref{joint-CLT}. Also while in Theorem \ref{joint-CLT}, $\bm{Cov}(T_{\tau_1,\tau_2,n} , U_{n,1} ) \to 0 $, in the current theorem we have for any $n$,
\begin{align*}
    \bm{Cov}(T_{\tau_1,\tau_2,n}^{\prime} , U_{n,1}^{\prime}  )  
   &=\frac{\sqrt{\delta_{m} }}{J_nr^{*\tau_2}} \bm{Cov} \left[ \sum_{j=1}^{J_n}  \frac{1}{\delta_{m}} \sum_{l=\tau_2+1}^n (\frac{Z_{l,j}}{Z_{l-1,j}} - m^*) , \sum_{j=1}^{J_n}  \left(  \frac{1}{\mathcal{N}_{\tau_1,\tau_2}^*} \sum_{l=\tau_1}^{\tau_2}   Z_{l,j}-m_A \right)\right] \\
   &=\frac{1 }{r^{*\tau_2}\sqrt{\delta_{m}}} \bm{E} \left[\left(   \sum_{l=\tau_2+1}^n (\frac{Z_{l,1}}{Z_{l-1,1}} - m^*) \right)  \left(  \frac{1}{\mathcal{N}_{\tau_1,\tau_2}^*}\sum_{l=\tau_1}^{\tau_2}   Z_{l,1}-m_A \right)\right] \\
   &= \frac{1 }{r^{*\tau_2}\sqrt{\delta_{m}}} \bm{E} \left[\left(  \frac{1}{\mathcal{N}_{\tau_1,\tau_2}^*}\sum_{l=\tau_1}^{\tau_2}   Z_{l,1}-m_A \right) \bm{E} \left[   \left(   \sum_{l=\tau_2+1}^n (\frac{Z_{l,1}}{Z_{l-1,1}} - m^*) \right) \middle| \mathcal{F}_{\tau_2}\right] \right]=0.
\end{align*}
This completes the proof of the Theorem.

\end{proof}

\subsection{Proof of Theorem \ref{relative-theorem}}
\label{apd-C-relative-theorem}
\begin{proof}
Let $ r_{T}^*>r_{C}^* $.  Notice that,
\begin{align*}
  \frac{\sqrt{J_n} (\hat{R}_{n}-R)}{r_{T}^{*n}+r_{C}^{*n}}  &=  \frac{1}{r_{T}^{*n}+r_{C}^{*n}}  \frac{\hat{m}_{A,T}m_{A,C}- \hat{m}_{A,C}m_{A,T}  }{\hat{m}_{A,C}m_{A,C}  }\\
 &= \frac{Rr_{T}^{*n}}{r_{T}^{*n}+r_{C}^{*n}}\left[ \frac{ m_{A,C} }{\hat{m}_{A,C} m_{A,T}}  \frac{ \sqrt{J_n}}{r_{T}^{*n}} (\hat{m}_{A,T}- m_{A,T}) - \frac{ r_{C}^{*n}  }{r_{T}^{*n} }\frac{1}{\hat{m}_{A,C} }  \frac{\sqrt{J_n}}{r_{C}^{*n}}(\hat{m}_{A,C}-m_{A,C}) \right]  \\
 &\xrightarrow[]{d} N( 0,  \frac{R^2 \sigma_{\tau_{T}}^2 }{m_{A,T}^2 }),
\end{align*}
where above convergence follows from Theorem \ref{consis-CLT4mataunJ} and Slutsky's theorem.

\noindent
Next if $ r_{T}^*< r_{C}^* $, the limiting distribution is,
\begin{align*}
    \frac{\sqrt{J_n} (\hat{R}_{n}-R)}{r_{T}^{*n}+r_{C}^{*n}}   \xrightarrow[]{d} N( 0,  \frac{R^2 \sigma_{\tau_{C}}^2 }{m_{A,C}^2 }).
\end{align*}
Finally, if $ r_{T}^*=r_{C}^* $, then
\begin{align*}
    \frac{\sqrt{J_n} (\hat{R}_{n}-R)}{r_{T}^{*n}+r_{C}^{*n}} \xrightarrow[]{d} N\left[ 0, \frac{R^2}{4}\left(\frac{\sigma_{\tau_{T}}^2 }{m_{A,T}^2}+\frac{\sigma_{\tau_{C}}^2 }{m_{A,C}^2 } \right) \right].
\end{align*}

\end{proof}

\subsection{Proof of Lemma \ref{var-sum-zn}}
\label{apd-c-var-sum-zn}
Next, we provide a proof of Lemma \ref{var-sum-zn}, preceded by a presentation of a required preliminary lemma.

\begin{lemma}
\label{varsumZl}
   Under condition of Lemma \ref{var-sum-zn}, for any $\tau $, 
\begin{align*}
     \bm{Var}   ( \sum_{l=\tau}^n Z_{l})  = \sum_{l=\tau}^n \left[ \left(1+2 \sum_{k=1}^{n-l}  m^{*k} \right)\bm{Var} (Z_l)\right]
\end{align*}
\end{lemma}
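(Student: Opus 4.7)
The plan is to expand the variance of the sum in the standard way and compute the covariance terms explicitly, exploiting the fact that, conditional on $\mathcal{F}_{l_1}$, the future environments $\zeta_{l_1},\zeta_{l_1+1},\ldots$ are independent of everything observed through generation $l_1$. First I would write
\begin{align*}
\bm{Var}\!\left(\sum_{l=\tau}^n Z_l\right)=\sum_{l=\tau}^n \bm{Var}(Z_l)+2\sum_{\tau\le l_1<l_2\le n}\bm{Cov}(Z_{l_1},Z_{l_2}),
\end{align*}
so that the whole problem reduces to computing $\bm{Cov}(Z_{l_1},Z_{l_2})$ for $l_1<l_2$.

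Next I would establish the key identity $\bm{E}[Z_{l_2}\mid\mathcal{F}_{l_1}]=m^{*(l_2-l_1)}\,Z_{l_1}$. This follows by iterating one step at a time: conditional on $\mathcal{F}_{l}$, the variables $\zeta_{l,1},\xi_{l,1,k}$ are independent of $\mathcal{F}_l$, so $\bm{E}[Z_{l+1}\mid\mathcal{F}_l]=Z_l\,\bm{E}[m_{l,1}]=m^* Z_l$, and one iterates $l_2-l_1$ times using the tower property together with the independence of $\{\zeta_{l,1}:l\ge l_1\}$ from $\mathcal{F}_{l_1}$. Then
\begin{align*}
\bm{Cov}(Z_{l_1},Z_{l_2})
&=\bm{E}\!\left[Z_{l_1}\bm{E}[Z_{l_2}\mid\mathcal{F}_{l_1}]\right]-\bm{E}[Z_{l_1}]\bm{E}[Z_{l_2}]\\
&=m^{*(l_2-l_1)}\bm{E}[Z_{l_1}^2]-m_A^2\,m^{*(l_1+l_2)}\\
&=m^{*(l_2-l_1)}\bigl(\bm{Var}(Z_{l_1})+m_A^2 m^{*2l_1}\bigr)-m_A^2 m^{*(l_1+l_2)}\\
&=m^{*(l_2-l_1)}\bm{Var}(Z_{l_1}),
\end{align*}
so the constant-mean contribution cancels and only a clean geometric factor times $\bm{Var}(Z_{l_1})$ survives.

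Finally I would substitute back and re-index by setting $l=l_1$ and $k=l_2-l_1$, which gives
\begin{align*}
\bm{Var}\!\left(\sum_{l=\tau}^n Z_l\right)
=\sum_{l=\tau}^n \bm{Var}(Z_l)+2\sum_{l=\tau}^n \bm{Var}(Z_l)\sum_{k=1}^{n-l} m^{*k}
=\sum_{l=\tau}^n\!\left(1+2\sum_{k=1}^{n-l} m^{*k}\right)\!\bm{Var}(Z_l),
\end{align*}
which is the claimed identity. There is no real obstacle here: the only non-trivial ingredient is the conditional-mean identity, and even that is immediate once one uses that $\mathcal{F}_{l_1}$ is defined so as to exclude the environments governing generations $\ge l_1$. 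The result does not require \ref{Assum-mo4+2delta}; it is a purely first-and-second-moment calculation, and the explicit closed form of $\bm{Var}(Z_l)$ from (\ref{varzn1}) is not needed in this lemma (it will be substituted later when deriving $\sigma_I^2$ and $\sigma_F^2$).
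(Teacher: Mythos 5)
Your proposal is correct and follows essentially the same route as the paper: expand the variance of the sum, compute $\bm{Cov}(Z_{l_1},Z_{l_2})$ by conditioning on $\mathcal{F}_{l_1}$ via $\bm{E}[Z_{l_2}\mid\mathcal{F}_{l_1}]=m^{*(l_2-l_1)}Z_{l_1}$, and re-index the double sum; the paper does the one-step covariance and invokes induction, while you carry out the $k$-step identity directly, which is the same argument stated slightly more cleanly (and, as you note, correctly avoids needing \ref{Assum-mo4+2delta} or the explicit form of $\bm{Var}(Z_l)$).
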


\begin{proof}
     Notice that,
\begin{align}
   \bm{Cov} \left( Z_{n} ,  Z_{n-1} \right) 
  &=  \bm{E}[ Z_{n-1} \bm{E}( Z_{n}   |\mathcal{F}_{n-1} )] - m^* [\bm{E}( Z_{n-1} )]^2 \nonumber \\
  &= \bm{E}[ Z_{n-1}^2  ] \bm{E}[m_{n}] - m^* [\bm{E}( Z_{n-1} )]^2 
  = m^* \bm{Var} (Z_{n-1}). \label{cov_zn,zn-1}
\end{align}
By induction, we get
\begin{align*}
      \bm{Cov} \left( Z_{n} ,  Z_{n-k} \right) =2 m^{*(n-k)}\bm{Var} (Z_{n-k}).
\end{align*}
Hence, we get
\begin{align*}
       \bm{Var}   ( \sum_{l=\tau}^n Z_{l})  &= \sum_{l=\tau}^n \bm{Var}   [ Z_{l}]  + 2 \sum_{\tau \le l<j\le n} \bm{Cov} (Z_l,Z_j)\\
       &= \sum_{l=\tau}^n \left[ \left(1+2 \sum_{k=1}^{n-l}  m^{*k} \right)\bm{Var} (Z_l)\right].
\end{align*}
\end{proof}

\noindent
Now we show the proof of Lemma \ref{var-sum-zn}.
\begin{proof}
   Recall that 
   \begin{align*}
   \bm{Var}(Z_l)=  m_{2}^{*(l-1)} \mathfrak{D}_l + m^{*2l} \sigma_A^2 .
\end{align*}
Now first consider the case that $ n-\tau_n \to \infty$, by Lemma \ref{varsumZl},
\begin{align}
    \bm{Var} \left[\frac{1}{r^{*n}} \frac{1}{\mathcal{N}_{n}^*} \sum_{l=\tau}^n Z_{l,j} \right] &= \frac{1 }{\mathcal{N}_{n}^{*2} r^{*2n}}  \left\{  \sum_{l=\tau}^n \left[ \left(1+2 \sum_{k=1}^{n-l}  m^{*k} \right) ( m_{2}^{*(l-1)} \mathfrak{D}_l + m^{*2l} \sigma_A^2  )\right]  \right\} \label{ma-truevar} \\
    &=  \frac{1 }{\mathcal{N}_{n}^{*2} r^{*2n}}  \left[  \sum_{l=\tau}^n \left(1+2 \sum_{k=1}^{n-l}  m^{*k} \right)  m_{2}^{*(l-1)} \mathfrak{D}_l  \right] \nonumber \\
    & \quad +\frac{1 }{\mathcal{N}_{n}^{*2} r^{*2n}}   \left[ \sum_{l=\tau}^n  \left(1+2 \sum_{k=1}^{n-l}  m^{*k} \right)  m^{*2l} \sigma_A^2  \right] \nonumber \\
    &\coloneqq  \mathcal{K}_{\tau,n,J,1 }+\mathcal{K}_{\tau,n,J,2 } ,\nonumber
\end{align}
consider $ \mathcal{K}_{\tau,n,J,2 }$, since 
\begin{align*}
\sum_{l=\tau}^n \left[ \left(1+2 \sum_{k=1}^{n-l}  m^{*k} \right) (
    m^{*2l} \sigma_A^2  )\right]  
  &= \sigma_A^2 \sum_{l=\tau}^n \frac{2m^{*(n+l+1)}-m^{*(2l+1)}-1 }{m^*-1 } \\
  &\le \sigma_A^2 \sum_{l=\tau}^n \frac{2m^{*(n+l+1)} }{m^*-1 }  
  =\frac{ 2\sigma_A^2 m^{*(n+\tau+1)} ( m^{*(n-\tau)}-1 )}{(m^*-1)^2},
\end{align*}
using $r^*>1 $, we have
\begin{align}
    \mathcal{K}_{\tau,n,J,2} 
    \le \frac{1 }{\mathcal{N}_{n}^{*2} r^{*2n}} \frac{ 2\sigma_A^2 m^{*(n+\tau+1)} ( m^{*(n-\tau)}-1 )}{(m^*-1)^2} \le \frac{1 }{ r^{*2n}}  \frac{ 2\sigma_A^2 m^{*2n+1} }{ (m^{*(2n +2)} -m^{*2\tau})   } \to 0. \label{k_taunJ1-c0}
\end{align}
Now consider $\mathcal{K}_{\tau,n,J,1 } $,  we have
\begin{align}
\mathcal{K}_{\tau,n,J,1 }   &= \frac{1 }{\mathcal{N}_{n}^{*2} r^{*2n}}  \left[ \frac{2m^{*n}  }{ m^*-1 }  \sum_{l=\tau}^n (m^*r^{2*})^{l-1}\mathfrak{D}_l - \frac{m^*+1}{ m^*-1 } \sum_{l=\tau}^n  m_{2}^{*(l-1)} \mathfrak{D}_l \right] \nonumber \\
    &=\frac{1}{r^{*2n}}  \frac{2m^{*n} (m^*-1)   }{ (m^{*( n  +1)} -m^{*\tau})^2     }\sum_{l=\tau}^n (m^*r^{2*})^{l-1}\mathfrak{D}_l - \frac{1}{r^{*2n}}  \frac{ (m^{*2}-1)}{(m^{*( n  +1)} -m^{*\tau})^2 } \sum_{l=\tau}^n (m^{*2}r^{2*})^{l-1}\mathfrak{D}_l \nonumber \\
    & \to \mathfrak{D} \left( \frac{2(m^*-1)}{ m^* (m_{2}^*-m^*)} -\frac{(m^{2*}-1)  }{ m^{2*} (m_{2}^*-1)}  \right) =  \frac{\mathfrak{D}(m^*-1)^2(m_{2}^*+m^*)}{m^{2*}(m_{2}^*- m^*)(m_{2}^*-1) } = \sigma_{I}^2.  \label{k_taunJ1-c1}
\end{align}
Hence, by (\ref{k_taunJ1-c0}) and (\ref{k_taunJ1-c1}), we have
\begin{align*}
    \bm{Var}\left[\frac{1}{r^{*n}} \frac{1 }{\mathcal{N}_{n}^*} \sum_{l=\tau}^n Z_{l,j} \right] \to \sigma_{I}^2.
\end{align*}
Now for $n-\tau_n $ is fixed, the proof follows the same logic, except that for $ \mathcal{K}_{\tau,n,J,1 } $ in this case, we have as $ n \to \infty $,
\begin{align}
  \mathcal{K}_{\tau,n,J,1 } &\to   \frac{2\mathfrak{D} (m^*-1)   }{ m^*r^{2*}-1 } \frac{m^{*2\delta_\tau}r^{*(2\delta_\tau+2)} - m^{*\delta_\tau-1} }{ (m^{*(\delta_\tau  +1)} -1)^2  r^{*(2\delta_\tau+2)}    } - \frac{\mathfrak{D}(m^{2*}-1)}{m^{2*}r^{2*}-1} \frac{m^{*(2\delta_\tau+2)} r^{*(2\delta_\tau+2)} -1}{r^{*(2\delta_\tau+2)}(m^{*(\delta_\tau  +2)} -m^*)^2} \label{k_taunJ1-c2} \\
     &=\frac{\mathfrak{D}(m^*-1)m^{*(2\delta_\tau+2)} }{m_{2}^{*(\delta_\tau+1)}(m^{*( \delta_\tau  +2)} -m^*)^2} \left( \frac{2m^* m_{2}^{*(\delta_\tau+1)} - 2m^{*(\delta_\tau+2)}}{ m_{2}^*-m^*  }  - \frac{(m^*+1)(m_{2}^{*(\delta_\tau+1)}-1 ) }{m_{2}^*-1} \right)  
     = \sigma_{F}^2(\delta_\tau).  \nonumber
\end{align}
The rest of the proof again follows Slutsky's theorem.
\begin{align*}
    \bm{Var}\left[\frac{1}{r^{*n}} \frac{1 }{\mathcal{N}_{n}^*} \sum_{l=\tau}^n Z_{l,j} \right] \to \sigma_{F}^2(\delta_\tau).
\end{align*}
Notice that as $\delta_\tau \to \infty $, the RHS of (\ref{k_taunJ1-c2}) is increasing and converges to the RHS of (\ref{k_taunJ1-c1}), implies  $\sigma_{F}^2(\delta_\tau) \to \sigma_{I}^2 $. Meanwhile, if $\delta_\tau=0 $, $ \sigma_{F}^2(0)=\frac{\mathfrak{D}}{m_{2}^*}$.
\end{proof}

\subsection{Additional details of Lemma \ref{EY2l^2-finite}}
\label{apd-c-EY2l^2-finite}
 Recall that 
\begin{align*}
     L_{n, \tau_n} = 2 \bm{Cov}[ \sum_{l=\tau_n+1}^{n-1} Y_{2,l},\frac{\sigma_{n}^2}{Z_n} ] + \bm{Var} [\frac{\sigma_{n}^2}{Z_n} ] + \bm{E}[   \bm{Var}   (    Y_{2,n}  | \mathcal{F}_n  ) ],
\end{align*}
our next lemma shows that $L_{n, \tau_n}$ is bounded by a constant $c_\delta^{**} $ a term that converges to zero exponentially fast: $n(a^{\frac{1}{2}})^n   $.

\begin{lemma}
    \label{L_ntau_n}
    Under conditions and settings of Lemma \ref{EY2l^2-finite}, there exists $ c_\delta^{**}>0 $, and $0<a<1 $, 
    \begin{align*}
         L_{n, \tau_n}   
     \le  c_\delta^{**}   n(a^{\frac{1}{2}})^n   .
    \end{align*}
\end{lemma}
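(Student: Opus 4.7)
The strategy is to bound each of the three pieces comprising $L_{n,\tau_n}$ separately, showing each decays at least like $(a^{1/2})^n$, after which the stated inequality follows with room to spare.

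For the inner variance term $\bm{E}[\bm{Var}(Y_{2,n}\mid\mathcal{F}_n)]$, I would work conditionally on $\mathcal{F}_n$. Writing $Z_{n+1}/Z_n = m_n + (Z_{n+1}-Z_n m_n)/Z_n$ and expanding $(Z_{n+1}/Z_n)^2$, the conditional variance of $Y_{2,n}$ decomposes into contributions of the form $c\, m_n^2\sigma_n^2/Z_n + c'\,m_n\mu_{3,n}/Z_n^2 + c''\,\mu_{4,n}/Z_n^3 + c'''\,\sigma_n^4/Z_n^2$, where $\mu_{3,n},\mu_{4,n}$ are conditional central moments of $\xi$. Assumption \ref{AssumA1} gives $Z_n\ge 1$, so $1/Z_n^p\le 1/Z_n$ for $p\ge 1$, and an application of Lemma \ref{sum1/zn} after taking expectations yields the bound $O(a^n)$ provided the integrated conditional moments are finite.

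For $\bm{Var}[\sigma_n^2/Z_n]$, I would use that $\zeta_n$ (hence $\sigma_n^2$) is independent of $\mathcal{F}_n$, hence independent of $Z_n$. This gives $\bm{E}[\sigma_n^4/Z_n^2]=\bm{E}[\sigma_n^4]\,\bm{E}[1/Z_n^2]$, and combined with $\bm{E}[1/Z_n^2]\le\bm{E}[1/Z_n]\le c_H a^n$ from Lemma \ref{sum1/zn}, this term is also $O(a^n)$. For the covariance $2\bm{Cov}[\sum_{l=\tau_n+1}^{n-1}Y_{2,l},\sigma_n^2/Z_n]$, the same independence reduces each summand to $\gamma^{2*}\bm{Cov}(Y_{2,l},1/Z_n)$. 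Applying Cauchy--Schwarz gives $|\bm{Cov}(Y_{2,l},1/Z_n)|\le\sqrt{\bm{E}[Y_{2,l}^2]\,\bm{E}[1/Z_n^2]}$, and repeating the argument used for the inner-variance term, but applied at generation $l$, yields $\bm{E}[Y_{2,l}^2]\le c_Y a^l$. Thus each covariance is $O(a^{(n+l)/2})$, and geometric summation over $l\ge\tau_n+1$ produces a bound of order $a^{n/2}=(a^{1/2})^n$.

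Putting the three pieces together, $L_{n,\tau_n}\le C_1 a^{n/2}+C_2 a^n+C_3 a^n=O((a^{1/2})^n)$, which is comfortably dominated by $c_\delta^{**} n(a^{1/2})^n$ for a suitable constant $c_\delta^{**}$. The dominant contribution is the covariance term, and the extra factor of $n$ in the lemma is a harmless slack.

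The main obstacle I anticipate is purely one of bookkeeping: verifying that the conditional-moment expectations $\bm{E}[m_n^2\sigma_n^2]$, $\bm{E}[\sigma_n^4]$, $\bm{E}[|\mu_{3,n}|]$ and $\bm{E}[\mu_{4,n}]$ are genuinely finite under \ref{AssumA1}--\ref{Assum-mo4+2delta}. The stated assumption $m_{4+2\delta}^*<\infty$ controls the conditional \emph{mean} $m_{0,1}$ but only indirectly constrains higher central moments of the conditional offspring distribution, so some care is needed to argue (or to absorb into an implicit moment hypothesis) that these constants are finite; once that is granted, the geometric sum $\sum_{l}l(a^{1/2})^l$ bounding $\bm{Var}[\sum Y_{2,l}]$ in Lemma~\ref{EY2l^2-finite} follows immediately.
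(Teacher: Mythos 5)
Your proposal is correct in outline and follows the same basic strategy as the paper: bound the three pieces of $L_{n,\tau_n}$ separately, using the independence of $\zeta_n$ (hence $\sigma_n^2$) from $\mathcal{F}_n$ together with the harmonic-moment bound of Lemma \ref{sum1/zn}, so that $\bm{Var}[\sigma_n^2/Z_n]$ and $\bm{E}[\bm{Var}(Y_{2,n}\mid\mathcal{F}_n)]$ are $O(a^n)$ and the covariance term is controlled by Cauchy--Schwarz through $\sqrt{\bm{E}[1/Z_n^2]}\le\sqrt{\bm{E}[1/Z_n]}=O(a^{n/2})$. Where you genuinely diverge is in how the covariance sum is closed. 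The paper bounds each summand uniformly in $l$: it replaces the $Y_{2,l}$-related factor by a constant, using $\bm{E}[m_l^4]=m_4^*$ and the fourth-moment bound $\bm{E}[(Z_{l+1}/Z_l-m_l)^4]\le c_\delta$ cited from \cite{Huang-Wang2023}, so the $(n-\tau_n)$ summands, each of size $O(a^{n/2})$, produce exactly the factor $n$ appearing in the lemma. You instead reduce each summand to $\gamma^{2*}\bm{Cov}(Y_{2,l},1/Z_n)$ (valid, since $\zeta_n$ is independent of $(Y_{2,l},Z_n)$ for $l\le n-1$) and exploit decay in $l$ via the claim $\bm{E}[Y_{2,l}^2]\le c_Y a^l$, which lets a geometric series absorb the sum and gives the sharper bound $O((a^{1/2})^n)$ with no factor $n$; that claim is true but is asserted rather than proved, and establishing it amounts to redoing the conditional second-moment expansion you sketch for $L_{n,3}$ at generation $l$, so your route trades the paper's appeal to an external fourth-moment theorem for an additional internal estimate. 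On the moment-finiteness worry you raise: it is real but does not put you at a disadvantage relative to the paper, whose own proof quietly assumes finiteness of fourth-order offspring quantities such as $\bm{E}[m_{2,l}^2]$, $\bm{E}[\sigma_n^4]$ and $\bm{E}[\bm{Var}(\xi_{n,1}^2\mid\zeta_n)]$ (its constants $\sigma_2^*$, $\gamma_2^{2*}$, $\rho_2^{2*}$) beyond the literal content of \ref{Assum-gamma} and \ref{Assum-mo4+2delta}; to make either argument airtight these conditional-moment hypotheses should be stated explicitly.
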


\begin{proof}
Let $   L_{n,\tau_n} = 2 L_{n,\tau_n,1}+L_{n,2}+L_{n,3} $ where $L_{n,\tau_n,1}= \bm{Cov}[ \sum_{l=\tau_n+1}^{n-1} Y_{2,l}, Z_n^{-1}\sigma_{n}^2]$, $L_{n,2}= \bm{Var} [Z_n^{-1}\sigma_{n}^2 ]$, and $L_{n,3} =\bm{E}[   \bm{Var}   (    Y_{2,n}  | \mathcal{F}_n  ) ] $. 
For $L_{n,\tau_n,1}$, we have 
\begin{align}
   L_{n,\tau_n,1}
   &= \sum_{l=\tau_n+1}^{n-1}\bm{Cov}\left[  \left( \frac{Z_{l+1}}{Z_l}\right)^2,\frac{\sigma_{n}^2}{Z_n} \right]  -  \sum_{l=\tau_n+1}^{n-1}\bm{Cov}\left[ m_{2,l},\frac{\sigma_{n}^2}{Z_n} \right] \nonumber \\
   & \le \left| \sum_{l=\tau_n+1}^{n-1}\bm{Cov}\left[  \left( \frac{Z_{l+1}}{Z_l}\right)^2,\frac{\sigma_{n}^2}{Z_n} \right]  \right| + \left|  \sum_{l=\tau_n+1}^{n-1}\bm{Cov}\left[ m_{2,l},\frac{\sigma_{n}^2}{Z_n} \right] \right| \nonumber \\
   &\coloneqq  | L_{n,\tau_n,1,1} |+ | L_{n,\tau_n,1,2}| \label{Ln12-p1}
\end{align}
now for $| L_{n,\tau_n,1,1} | $ because of nonnegativity, let $ \sqrt{\bm{E}[ m_{2,l}^2 ]} = \sigma_{2}^* $, by Cauchy Schwarz Inequality,  
\begin{align*}
  | L_{n,\tau_n,1,1} | &\le \sum_{l=\tau_n+1}^{n-1}\bm{E}\left[ \frac{m_{2,l} \sigma_{n}^2}{Z_n} \right] 
  = \sum_{l=\tau_n+1}^{n-1}\bm{E}[\sigma_{n}^2]\bm{E}\left[\frac{ m_{2,l}}{Z_n} \right] \\
   &\le  \gamma^{2*} \sum_{l=\tau_n+1}^{n-1} \sqrt{\bm{E}[ m_{2,l}^2 ]}\sqrt{\bm{E}\left[\frac{1}{Z_n^2} \right]} \le (n-\tau_n) \gamma^{2*}  \sigma_{2}^*\sqrt{\bm{E}\left[\frac{1}{Z_n} \right]}
\end{align*}
and for $| L_{n,\tau_n,1,2}|$, by Cauchy Schwarz Inequality and Theorem 2.8 in \cite{Huang-Wang2023}, there exists $ c_\delta>0$ such that
\begin{align*}
 | L_{n,\tau_n,1,2}|
    &\le  \sum_{l=\tau_n+1}^{n-1}\bm{E}\left[\left( \frac{Z_{l+1}}{Z_l}\right)^2\frac{\sigma_{n}^2}{Z_n} \right]=  \gamma^{2*} \sum_{l=\tau_n+1}^{n-1}\bm{E}\left[\left( \frac{Z_{l+1}}{Z_l} -m_{l}+m_{l}\right)^2\frac{1}{Z_n} \right]  \\
  & \le \gamma^{2*} \sum_{l=\tau_n+1}^{n-1} 2 \left(\sqrt{\bm{E}\left[ \frac{Z_{l+1}}{Z_l} -m_{l}\right]^4 } + \sqrt{\bm{E}[  m_{l}^4 ]} \right) \sqrt{\bm{E}\left[\frac{1}{Z_n}\right]} \\
  & \le  \gamma^{2*} \sum_{l=\tau_n+1}^{n-1} 2(c_\delta    + \sqrt{m_{4}^*}) \sqrt{\bm{E}\left[\frac{1}{Z_n}\right]}.
\end{align*}
Then for some constant $ c_\delta^* >  \gamma^{2*} (\sigma_{2}^*+2c_\delta+2\sqrt{m_{4}^*} )$  and (\ref{Ln12-p1}) we have 
\begin{align*}
    L_{n,\tau_n,1} 
    &\le   (n-\tau_n)  \gamma^{2*}  \sigma_{2}^*\sqrt{\bm{E}\left[\frac{1}{Z_n} \right]} + 2(n-\tau_n)  \gamma^{2*}(c_\delta     + \sqrt{m_{4}^*}) \sqrt{\bm{E}\left[\frac{1}{Z_n}\right]} \\
    & \le (n-\tau_n)  \gamma^{2*} (\sigma_{2}^*+2c_\delta+2\sqrt{m_{4}^*} )\sqrt{\bm{E}\left[\frac{1}{Z_n}\right]} \coloneqq (n-\tau_n)  c_\delta^* \sqrt{\bm{E}\left[\frac{1}{Z_n}\right]}.
\end{align*}
For $L_{n,2} $, let $\bm{E}[\sigma_{n}^2]^2=\gamma_{2}^{2*}  $, we have
\begin{align*}
 L_{n,2} \le 
     \bm{E} \left[ \frac{\sigma_{n}^2}{Z_n} \right]^2 \le \bm{E}\left[\frac{1}{Z_n}\right] \gamma_{2}^{2*} 
\implies
    L_{n,2} \le \bm{E}\left[\frac{1}{Z_n}\right] \gamma_{2}^{2*} .
\end{align*}
Finally, for $L_{n,3}$, let $ \bm{E}[\bm{Var}[\xi_{n,1}^2|\mathcal{F}_n]] =\rho_{2}^{2*} $, since
\begin{align*}
     \bm{Var}  \left[ Z_{n+1}^2 \middle|  \mathcal{F}_n  \right]
  & =  \bm{Var}  \left[ \left( \sum_{k=1}^{Z_n} \xi_{n,k} \right)^2\middle|  \mathcal{F}_n  \right] 
   = \bm{Var}  \left[ \left( \sum_{k=1}^{Z_n} \xi_{n,k}^2 \right) + 2\sum_{1\le k_1<k_2 \le Z_n} \xi_{n,k_1}  \xi_{n,k_2} \middle|  \mathcal{F}_n  \right]\\
   &=  Z_n \bm{Var}[\xi_{n,1}^2|\mathcal{F}_n] + 4 Z_n (Z_n-1)\bm{Var}[\xi_{n,1}\xi_{n,2}|\mathcal{F}_n] + 2 Z_n (Z_n-1) \bm{Cov}[ \xi_{n,1}^2,\xi_{n,1}\xi_{n,2}]  \\
   &\le  6 Z_n^2 \bm{Var}[\xi_{n,1}^2|\mathcal{F}_n],
\end{align*}
we have
\begin{align*}
     L_{n,3} =  \bm{E}\left[   \frac{1}{Z_n^4} \bm{Var}  \left[ Z_{n+1}^2 \middle|  \mathcal{F}_n  \right] \right] \le  \bm{E}\left[\frac{6}{Z_n^2} \right] \bm{E}[\bm{Var}[\xi_{n,1}^2|\mathcal{F}_n]] \le
       \bm{E}\left[\frac{6}{Z_n}\right]\rho_{2}^{2*} .
\end{align*}
Then for some constant $c_\delta^{**} > c_\delta^*+\gamma_{2}^{2*} +\rho_{2}^{2*}  $, by Lemma \ref{sum1/zn},
\begin{align*}
   L_{n, \tau_n}   
     \le  c_\delta^*  (n-\tau_n) \sqrt{\bm{E}\left[\frac{1}{Z_n}\right]} + \gamma_{2}^{2*}   \bm{E}\left[\frac{1}{Z_n}\right]  +\rho_{2}^{2*}   \bm{E}\left[\frac{1}{Z_n}\right] \le c_\delta^{**}   n(a^{\frac{1}{2}})^n   .
\end{align*}

\end{proof}

\subsection{Additional details of Theorem \ref{asymp_varma}}
\label{apd-asymp_varma}
Here we prove that the first term on the RHS of (\ref{fkD3}), $\frac{1}{r^{*2n}} \frac{1}{J_n}  \sum_{j=1}^{J_n}\left( \frac{Z_{n,j}   }{ \tilde{m}_{n}^n } - m_A\right)^2  $, converges to $ \frac{\mathfrak{D}}{m_2^*} $ in probability.

\begin{proof}
Notice
    \begin{align*}
  \frac{1}{r^{*2n}} \frac{1}{J_n}  \sum_{j=1}^{J_n}\left( \frac{Z_{n,j}   }{ \tilde{m}_{n}^n } - m_A\right)^2 
  &= \frac{1}{r^{*2n}} \frac{1}{J_n}  \frac{m^{*2n}  }{ \tilde{m}_{n}^{2n} } \sum_{j=1}^{J_n}\left( \frac{Z_{n,j}   }{ m^{*n} } - \frac{ \tilde{m}_{n}^{2n} }{m^{*2n}  } m_A\right)^2 \\
  &= \frac{1}{r^{*2n}} \frac{1}{J_n}  \frac{m^{*2n}  }{ \tilde{m}_{n}^{2n} } \sum_{j=1}^{J_n}\left( \frac{Z_{n,j}   }{ m^{*n} } - m_A+m_A- \frac{ \tilde{m}_{n}^{2n} }{m^{*2n}  } m_A\right)^2 \\
  &= \frac{1}{r^{*2n}} \frac{1}{J_n}  \frac{m^{*2n}  }{ \tilde{m}_{n}^{2n} } \sum_{j=1}^{J_n}\left( \frac{Z_{n,j}   }{ m^{*n} } - m_A\right)^2+ 
 \frac{1}{r^{*2n}} \frac{m_A^2}{J_n}   \sum_{j=1}^{J_n}\left( \frac{m^{*n}  }{ \tilde{m}_{n}^{n} } - 1 \right)^2 \\
  & \quad  + \frac{1}{J_n} m_A  
\left( \frac{m^{*n}  }{ \tilde{m}_{n}^{n} } - 1 \right)  \frac{m^{*n}  }{ \tilde{m}_{n}^{n} }  \frac{1}{r^{*2n}} \sum_{j=1}^{J_n} \left( \frac{Z_{n,j}   }{ m^{*n} } - m_A\right) \\
&=T_n(1)+T_n(2)+T_n(3);
 \end{align*}
Now by Theorem \ref{mo^n to 1},
\begin{align*}
    T_n(2) = \frac{1}{r^{*2n}}  m_A^2  \left( \frac{m^{*n}  }{ \tilde{m}_{n}^{n} } - 1 \right)^2 \xrightarrow[]{p} 0.
\end{align*}
By Theorem \ref{mo^n to 1}, Theorem \ref{consis-CLT4ma}, 
\begin{align*}
    T_n(3)=  m_A  
\left( \frac{m^{*n}  }{ \tilde{m}_{n}^{n} } - 1 \right)  \frac{m^{*n}  }{ \tilde{m}_{n}^{n} } \left[ \frac{1}{r^{*2n}} \frac{1}{J_n} \sum_{j=1}^{J_n} \left( \frac{Z_{n,j}   }{ m^{*n} } - m_A\right)  \right] \xrightarrow[]{p} 0.
\end{align*}
For $T_n(1)$, let $X_{nj}=\frac{1}{r^{*2n}} \left( \frac{Z_{n,j}   }{ m^{*n} } - m_A\right)^2 $, then 
\begin{align*}
 T_n(1)=  \frac{m^{*2n}  }{ \tilde{m}_{n}^{2n} }   \frac{1}{J_n}  \sum_{j=1}^{J_n}  X_{nj}.
\end{align*}
By Theorem \ref{mo^n to 1}, $ \frac{m^{*2n}  }{ \tilde{m}_{n}^{2n} } \xrightarrow[]{p} 1$, hence it is sufficient to show that $\frac{1}{J_n}  \sum_{j=1}^{J_n}  X_{nj} \xrightarrow[]{p} \frac{\mathfrak{D}}{m_2^*}  $. Recall from (\ref{var(zn/m^n)-to-D}) that $\bm{E}[X_{nj}]=\frac{\mathfrak{D}_n}{m_{2}^*} + \frac{\sigma_A^2}{r^{*2n}} \coloneqq C_{nj} $, by Chebyshev's inequality, and independence in $j$,
\begin{align}
    \bm{P} \left[ \left| \frac{1}{J_n}  \sum_{j=1}^{J_n}  X_{nj} - C_{nj} \right|>\epsilon \right]   &\le  \frac{1}{J_n} \bm{Var}[ X_{nj}] \nonumber \\
    & \le \frac{1}{J_n r^{*4n} }\bm{E} \left[  \left( \frac{Z_{n,j}   }{ m^{*n} } - m_A\right)^4 \right] \text{, by using }r^{*2}= \frac{m_2^{*}}{m^{*2}} \nonumber \\
    &\le \frac{\bm{E} [  Z_{n,j}^4 ]}{J_n m_2^{*2n}m^{*2n}  }   +  \frac{6\bm{E} [  Z_{n,j}^2 ]}{J_n m_2^{*2n} } \nonumber \\
    & \le  \frac{  c_4 m_4^{*n} }{J_n m_2^{*2n}m^{*2n}  }   +  \frac{6  c_2 m_2^{8n} }{J_n m_2^{*2n} } \label{asymp_varma-sup1},
\end{align}
where the last inequality follows from Theorem \ref{HL-zn/c-limit} for large $n$ and $c_4$ and $c_2$ are positive constants. Now since $\frac{n}{ \log J_n} \to 0 $ implies $\frac{a^n}{J_n}$ converges to 0 for any $a >0$, we have taking limits that each term on the RHS of (\ref{asymp_varma-sup1}) converges to zero, implying that  $ \left|\frac{1}{J_n}  \sum_{j=1}^{J_n}  X_{nj} - C_{nj} \right|\xrightarrow[]{p}0  $. Also, from (\ref{var(zn/m^n)-to-D}), since  $C_{nj} \to \frac{\mathfrak{D}}{m_2^*} $ as $ n \to \infty$  we have that $T_n(1) \xrightarrow[]{p} \frac{\mathfrak{D}}{m_2^*} $ as $n \to \infty$.  Hence,
\begin{align*}
    T_n(1)+T_n(2)+T_n(3) \xrightarrow[]{p} \frac{\mathfrak{D}}{m_2^*}. 
\end{align*}

\end{proof}

\subsection{Results from literature}
\label{apd-c-tfo}
The following results are cited from different papers. Since they are necessary for the proof of multiple results in this paper, we state the theorems to make the script self-contained.

\begin{theorem}[Theorem 3.2 in \cite{B-Hall-Heyde1980}]
    \label{PHclt-da}
 Let $\{ S_{ni}, \mathcal{F}_{ni}, 1\le i \le k_n, n\ge 1  \} $ be a zero-mean, square-integrable martingale array with differences $X_{ni}$, and let $\eta^2$ be an s.s. finite r.v. Suppose that 
\begin{gather}
    \max_{1\le i \le k_n} |X_{ni}| \xrightarrow[]{P} 0, \label{PHCLTAc1} \\
    \sum_i^{k_n} X_{ni}^2 \xrightarrow[]{P} \eta^2, \label{PHCLTAc2}\\
    \bm{E} (\max_{1\le i \le k_n} X_{ni}^2 ) \text{ is bounded in } n, \label{PHCLTAc3}\\
    \text{ and the $\sigma$-fields are nested: }\mathcal{F}_{n,i} \subseteq \mathcal{F}_{n+1,i} \quad \text{ for } 1\le i \le k_n , \; n  \ge 1.  \label{PHCLTAc4}
\end{gather}
Then $\displaystyle S_{n k_n}=\sum_i^{k_n} X_{ni} \xrightarrow[]{d} Z$(stably), where the r.v. $Z$ has characteristic function $\bm{E}[\exp(-\frac{1}{2} \eta^2t^2)]$.

\end{theorem}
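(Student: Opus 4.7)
Since the statement is a stable martingale central limit theorem, the plan is to work at the level of characteristic functions and exploit the martingale structure to obtain a telescoping product. Fix $t \in \mathbb{R}$ and set $\phi_n(t) = \bm{E}\bigl[\exp(it S_{n k_n})\bigr]$. The goal is to show $\phi_n(t) \to \bm{E}\bigl[\exp(-\tfrac{1}{2}\eta^2 t^2)\bigr]$, and in fact the stronger stable convergence, which amounts to the same limiting statement conditional on any event in $\bigcup_n \mathcal{F}_{n,1}$ (where nestedness (\ref{PHCLTAc4}) is used).

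First, I would condition successively. Write $e^{it S_{n k_n}} = \prod_{i=1}^{k_n} e^{it X_{ni}}$ and, for each $i$, expand
\[
    \bm{E}\bigl[e^{it X_{ni}} \,\bigm|\, \mathcal{F}_{n,i-1}\bigr] = 1 - \tfrac{t^2}{2}\,\bm{E}\bigl[X_{ni}^2 \,\bigm|\, \mathcal{F}_{n,i-1}\bigr] + R_{ni}(t),
\]
using the martingale difference property $\bm{E}[X_{ni}\mid \mathcal{F}_{n,i-1}] = 0$ together with the Taylor bound $|e^{ix} - 1 - ix + \tfrac{x^2}{2}| \le \min(|x|^2, |x|^3/6)$. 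Condition (\ref{PHCLTAc1}) ensures that the max of $|X_{ni}|$ is small in probability, so the cubic remainder terms are negligible, while condition (\ref{PHCLTAc3}) supplies the uniform integrability needed to pass from in-probability to in-expectation statements for the remainder.

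Next I would assemble the product by iterated conditioning from $i = k_n$ down to $i=1$, obtaining
\[
    \phi_n(t) = \bm{E}\Bigl[\,\prod_{i=1}^{k_n}\bigl(1 - \tfrac{t^2}{2}V_{ni} + R_{ni}(t)\bigr)\,\Bigr], \qquad V_{ni} := \bm{E}\bigl[X_{ni}^2 \mid \mathcal{F}_{n,i-1}\bigr],
\]
and then approximate the product by $\exp\bigl(-\tfrac{t^2}{2}\sum_i V_{ni}\bigr)$ using $\log(1+z) = z + O(z^2)$ on the principal branch; the smallness of each factor (again from (\ref{PHCLTAc1}) and (\ref{PHCLTAc3})) justifies this step. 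A standard martingale argument, using McLeish's inequality or direct $L^1$ comparison via (\ref{PHCLTAc3}), then replaces the conditional quadratic variation $\sum_i V_{ni}$ by the observed quadratic variation $\sum_i X_{ni}^2$, which by (\ref{PHCLTAc2}) converges in probability to $\eta^2$. Bounded convergence delivers $\phi_n(t) \to \bm{E}\bigl[e^{-\eta^2 t^2 /2}\bigr]$.

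Finally, to upgrade this to stable convergence, I would repeat the same computation with $\phi_n(t)$ replaced by $\bm{E}\bigl[Y\,e^{it S_{n k_n}}\bigr]$ for an arbitrary bounded $Y$ measurable with respect to some $\mathcal{F}_{n_0, 1}$; the nestedness condition (\ref{PHCLTAc4}) makes $Y$ measurable with respect to $\mathcal{F}_{n,1}$ for all $n \ge n_0$, so it passes through the inner conditional expectations and yields $\bm{E}[Y\,e^{it S_{nk_n}}] \to \bm{E}[Y\,e^{-\eta^2 t^2/2}]$. The main obstacle is controlling the remainder in the Taylor/product expansion uniformly, for which (\ref{PHCLTAc3}) is essential; without it, one can only obtain distributional convergence along subsequences. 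Conditions (\ref{PHCLTAc1})--(\ref{PHCLTAc2}) are what let us identify the limiting characteristic function, and (\ref{PHCLTAc4}) is what promotes weak convergence to stable convergence.
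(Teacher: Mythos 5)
This statement has no proof in the paper: it is quoted verbatim from Hall and Heyde (1980), Theorem 3.2, in the appendix of results cited from the literature, so there is nothing in-paper to compare your argument against. Judged on its own, your sketch has a genuine gap at its central step. You expand the conditional characteristic functions $\bm{E}[e^{itX_{ni}}\mid\mathcal{F}_{n,i-1}]$, arrive at the conditional variances $V_{ni}=\bm{E}[X_{ni}^2\mid\mathcal{F}_{n,i-1}]$, and then assert that a ``standard martingale argument'' replaces $\sum_i V_{ni}$ by $\sum_i X_{ni}^2$. Under the hypotheses actually assumed here, (\ref{PHCLTAc1})--(\ref{PHCLTAc4}), that replacement is not available: convergence of the raw quadratic variation $\sum_i X_{ni}^2\xrightarrow[]{P}\eta^2$ does not imply that $\sum_i V_{ni}$ converges to $\eta^2$, nor that the difference is negligible, and there is no conditional Lindeberg condition in this theorem to force the comparison. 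This is exactly why Hall and Heyde state the raw-square version (Theorem 3.2) and the conditional-variance version (their Theorem 3.3 / the Corollary \ref{PHclt-da-C} quoted in this paper) with separate hypotheses; the known implication runs from conditional Lindeberg plus (\ref{PHCLTAc4b}) to (\ref{PHCLTAc1})--(\ref{PHCLTAc2}), not the other way. A related weakness: (\ref{PHCLTAc1}) and (\ref{PHCLTAc3}) do not by themselves control the sum of conditional Taylor remainders $\sum_i R_{ni}(t)$; bounding $\sum_i\bm{E}[\min(|tX_{ni}|^2,|tX_{ni}|^3)\mid\mathcal{F}_{n,i-1}]$ is precisely what the conditional Lindeberg condition (\ref{PHCLTAc5}) is for. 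So, completed honestly, your argument proves Corollary \ref{PHclt-da-C}, not the theorem as stated.

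The standard proof of the stated theorem (McLeish's method, which Hall and Heyde follow) avoids conditional variances altogether, which is the right move given that the hypothesis is on $\sum_i X_{ni}^2$. One writes $e^{ix}=(1+ix)\exp(-x^2/2+r(x))$ with $r(x)=O(|x|^3)$ near $0$, so that $e^{itS_{nk_n}}=T_n(t)\exp\bigl(-\tfrac{t^2}{2}\sum_i X_{ni}^2+\sum_i r(tX_{ni})\bigr)$ with $T_n(t)=\prod_i(1+itX_{ni})$. The martingale difference property gives $\bm{E}[T_n(t)]=1$ exactly; (\ref{PHCLTAc1}) and (\ref{PHCLTAc2}) make the exponential factor converge in probability to $e^{-\eta^2t^2/2}$; and (\ref{PHCLTAc3}) combined with a stopping-time truncation of the partial sums of $X_{ni}^2$ supplies the uniform integrability of $T_n(t)$ needed to pass these in-probability statements to expectations. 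The nesting condition (\ref{PHCLTAc4}) then upgrades the conclusion to stable convergence much as in your final paragraph, which is the one part of your sketch that is essentially on target. If you prefer to keep your conditional-characteristic-function expansion, aim it at Corollary \ref{PHclt-da-C}; that is in fact the version this paper invokes, by verifying (\ref{PHCLTAc5}) and (\ref{PHCLTAc4b}) in the proof of Theorem \ref{consis-CLT4mataunJ}.
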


\begin{corollary}[Corollary 3.1 in \cite{B-Hall-Heyde1980}]
\label{PHclt-da-C}
If (\ref{PHCLTAc1}) and (\ref{PHCLTAc3}) are replaced by the Linderberg condition:
\begin{align}
    \forall \; \epsilon>0, \quad \sum_i^{k_n} \bm{E}[X_{ni}^2I(|X_{ni}|>\epsilon) |\mathcal{F}_{n,i-1}] \xrightarrow[]{P} 0 , \label{PHCLTAc5}
\end{align}
If (\ref{PHCLTAc2}) is replaced by the analogous condition on the conditional variance:
\begin{align}
    V_{nk_n}^2= \sum_{i=1}^{k_n} \bm{E}[X_{ni}^2|\mathcal{F}_{n,i-1}] \xrightarrow[]{P} \eta^2, \label{PHCLTAc4b}
\end{align}
and if (\ref{PHCLTAc4}) holds, then the conclusion of Lemma \ref{PHclt-da} remains true.
    
\end{corollary}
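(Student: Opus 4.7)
The plan is to deduce Corollary \ref{PHclt-da-C} from Theorem \ref{PHclt-da} by verifying the three hypotheses (\ref{PHCLTAc1})--(\ref{PHCLTAc3}) of the latter under the weaker conditions (\ref{PHCLTAc5}) and (\ref{PHCLTAc4b}); the nesting (\ref{PHCLTAc4}) is common to both and needs no work. The central device is a localizing truncation/stopping argument that converts the ``in probability'' conditional statements into the $L^{1}$-type control required to invoke Theorem \ref{PHclt-da}, combined with the \emph{stability} of the stated convergence, which will permit the patching of the stopped and unstopped arrays in the limit.

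First I would derive (\ref{PHCLTAc1}). For fixed $\epsilon>0$, a union bound, Markov's inequality, and the tower property yield
\begin{align*}
P\!\left(\max_{i\le k_n}|X_{ni}|>\epsilon\right) \;\le\; \epsilon^{-2}\,E\!\left[\sum_i E\!\left(X_{ni}^{2}\,I(|X_{ni}|>\epsilon)\,\big|\,\mathcal{F}_{n,i-1}\right)\right].
\end{align*}
To pass from the in-probability statement (\ref{PHCLTAc5}) to the vanishing of the unconditional expectation, introduce the stopping time $T_{n,c}:=\inf\{i:V_{n,i}^{2}>c\}$ and replace $X_{ni}$ by $X_{ni}\,I(i\le T_{n,c})$; the stopped conditional quadratic variation is then bounded by $c$, which supplies the uniform integrability needed to interchange limits. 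The same truncation handles (\ref{PHCLTAc3}), since on the event $\{T_{n,c}=k_n\}$ one has $\max_i X_{ni}^{2}\le c^{2}+\sum_i X_{ni}^{2}\,I(|X_{ni}|>c)$, and the residual vanishes by (\ref{PHCLTAc5}).

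Next, to upgrade (\ref{PHCLTAc4b}) to the unconditional (\ref{PHCLTAc2}), set $D_{ni}:=X_{ni}^{2}-E[X_{ni}^{2}\mid\mathcal{F}_{n,i-1}]$, which are orthogonal martingale differences in $i$. After truncating $X_{ni}$ at level $c$, their conditional second moment is bounded by $c^{2}\,E[X_{ni}^{2}\mid\mathcal{F}_{n,i-1}]$, so Chebyshev's inequality together with the orthogonality of martingale increments gives $\sum_i D_{ni}^{(c)}=o_p(1)$ on the stopped array, while the residual $\sum_i\bigl(X_{ni}^{2}-X_{ni}^{(c)\,2}\bigr)$ is controlled by (\ref{PHCLTAc5}). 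Combined with (\ref{PHCLTAc4b}), this yields (\ref{PHCLTAc2}) on the event $\{\eta^{2}\le c\}$. Applying Theorem \ref{PHclt-da} to the stopped arrays gives stable convergence of their partial-sum processes to $Z\,I(\eta^{2}\le c)$, and sending $c\to\infty$ completes the argument on $\{\eta^{2}<\infty\}$.

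The main obstacle is the bookkeeping around the truncation: the conditional Lindeberg condition is only an in-probability statement, so one cannot simply take expectations at the outset. The stopping time $T_{n,c}$ is what supplies the required uniform integrability, and the \emph{stability} (rather than merely distributional) nature of the convergence asserted by Theorem \ref{PHclt-da} is essential, since it is what allows us to condition on the limiting event $\{\eta^{2}\le c\}$ and thereby patch the stopped limit with the unstopped one as $c\to\infty$.
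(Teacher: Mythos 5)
This statement is not proved in the paper at all: it is quoted verbatim, as Corollary 3.1 of \cite{B-Hall-Heyde1980}, in the appendix listing results cited from the literature, so there is no in-paper argument to compare yours against. What you have written is essentially the standard textbook derivation of that corollary from Theorem \ref{PHclt-da}: truncate via the predictable level-$c$ stopping rule on the conditional variance, verify (\ref{PHCLTAc1})--(\ref{PHCLTAc3}) for the stopped array, apply Theorem \ref{PHclt-da}, and remove the truncation using that the stopped and unstopped sums differ only on $\{V_{nk_n}^2>c\}$, whose probability tends to $P(\eta^2>c)$, which is small for large $c$ since $\eta^2$ is a.s.\ finite. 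That is the right route and matches the proof in the cited source.

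A few details in your sketch would need tightening. First, with $T_{n,c}=\inf\{i:V_{n,i}^2>c\}$ and truncation $X_{ni}I(i\le T_{n,c})$ you include the crossing increment, so the stopped conditional variance is not bounded by $c$; use $\tilde X_{ni}=X_{ni}I(V_{ni}^2\le c)$ instead (legitimate because $V_{ni}^2$ is $\mathcal{F}_{n,i-1}$-measurable). Second, the pointwise bound $\max_i X_{ni}^2\le c^2+\sum_i X_{ni}^2 I(|X_{ni}|>c)$ does not by itself control $\bm{E}[\max_i X_{ni}^2]$, because the conditional Lindeberg sum is only known to vanish in probability; the correct mechanism is the one you also invoke, namely $\bm{E}[\max_i\tilde X_{ni}^2]\le \bm{E}[\sum_i\tilde X_{ni}^2]\le c$ for the stopped array. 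Third, the stopped array does not converge stably to $Z\,I(\eta^2\le c)$; its limit has characteristic function $\bm{E}[\exp(-\tfrac12(\eta^2\wedge c)t^2)]$ (to identify the limit of the stopped conditional variance one uses $\max_i \bm{E}[X_{ni}^2|\mathcal{F}_{n,i-1}]\xrightarrow[]{P}0$, a consequence of (\ref{PHCLTAc5}), to control the overshoot, and takes $c$ a continuity point of the law of $\eta^2$). The final step is then a comparison of $\bm{E}[I_B e^{itS_{nk_n}}]$ with the same functional for the stopped sum on arbitrary events $B$, with total error of order $P(\eta^2>c)$, rather than a conditioning on $\{\eta^2\le c\}$. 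With these adjustments your argument is exactly the classical proof of the quoted result.
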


\begin{theorem}[Theorem 1.3 in \cite{Huang-Liu2012}]
 \label{HL-zn/c-limit}
Let $t \in \mathbb{R} $ Suppose that $\bm{E}[ Z_1^t]< \infty $, for some constant $c_t \in (0, \infty) $,
\begin{align}
    \lim_{n \to \infty} \frac{ \bm{E}[Z_n^t]}{ m_{t}^{*n}} \to c_t.
\end{align}
    
\end{theorem}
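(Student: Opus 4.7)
The plan is to establish the convergence of $\bm{E}[Z_n^t]/m_t^{*n}$ by exploiting the recursive structure of the BPRE together with a Jensen/moment-comparison argument carried out conditionally on the environment. Throughout, I will write $\Pi_n = \prod_{k=0}^{n-1} m_{k,1}$ and work with the fundamental martingale $W_n = Z_n/\Pi_n$, which under \ref{AssumA1} is nonnegative and satisfies $\bm{E}[W_n \mid \zeta_0,\zeta_1,\ldots] = W_{n-1}$ w.p.1. The identity $\bm{E}[\Pi_n^t] = m_t^{*n}$ (by independence of the $m_{k,1}$'s across $k$) will be the source of the scaling factor $m_t^{*n}$.

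First, I would handle the case of positive integer $t$ by induction. Conditioning on $\mathcal{F}_{n,1}$ and $\zeta_n$ and expanding $\bm{E}[(\sum_{k=1}^{Z_n}\xi_{n,k})^t \mid \mathcal{F}_{n,1},\zeta_n]$ as a polynomial in $Z_n$ with leading term $Z_n^t m_{n,1}^t$ (lower-order terms involving $Z_n^s$ for $s<t$ and conditional moments of $\xi_{n,1}$), and then taking expectations and invoking the i.i.d.\ structure of $\{m_{n,1}\}$, one obtains the recursion
\begin{align*}
\frac{\bm{E}[Z_{n+1}^t]}{m_t^{*(n+1)}} = \frac{\bm{E}[Z_n^t]}{m_t^{*n}} + R_{n,t},
\end{align*}
where $R_{n,t}$ is a linear combination of $\bm{E}[Z_n^s]/m_t^{*(n+1)}$ for $s<t$. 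By the induction hypothesis $\bm{E}[Z_n^s]$ grows like $m_s^{*n}$, and an application of Jensen's inequality gives $m_s^{*1/s} \le m_t^{*1/t}$, so $R_{n,t}$ decays geometrically and is summable. Hence $\{\bm{E}[Z_n^t]/m_t^{*n}\}$ is Cauchy and converges to a finite limit $c_t$; positivity of $c_t$ follows because the sequence is nondecreasing up to controlled perturbations and $\bm{E}[Z_1^t] > 0$ under \ref{AssumA2}.

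To extend to real $t$, I would pass through the martingale $W_n$. For $t \ge 1$, I would show $\{W_n^t\}$ is uniformly integrable (using the standard $L^p$-bound from Burkholder's inequality applied conditionally on the environment, combined with the moment assumption $\bm{E}[Z_1^t]<\infty$), from which $\bm{E}[W_n^t]\to \bm{E}[W^t]=c_t<\infty$ and the identity $\bm{E}[Z_n^t] = \bm{E}[\Pi_n^t W_n^t]$ together with conditional independence of $\Pi_n$ and $W_n$ given the environment yields the claim. For $0<t<1$ one interpolates by concavity, and for $-1<t<0$ one uses the harmonic-moment bounds already developed in Lemma \ref{sum1/zn}; for more negative $t$, Grama--Liu-type exponential bounds on harmonic moments are required.

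The main obstacle will be the negative-$t$ case, because lower-order perturbation terms in the recursion no longer decay automatically and one must control $\bm{E}[Z_n^{-|t|}]$ directly. The cleanest route is to appeal to the harmonic-moment estimates (as used in Lemma \ref{sum1/zn}) to show $\bm{E}[W_n^t]$ remains bounded away from zero and infinity, and then conclude convergence of $\bm{E}[W_n^t]$ to $\bm{E}[W^t]$ via dominated convergence along $W_n\to W$ w.p.1. The positivity of $c_t$ for negative $t$ requires $\bm{P}(W>0)=1$, which follows from supercriticality \ref{AssumA1} and nondegeneracy \ref{AssumA2}.
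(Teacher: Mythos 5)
You should first know that the paper contains no proof of this statement: it is quoted verbatim in the appendix of ``Results from literature'' as Theorem 1.3 of Huang and Liu (2012), so there is nothing internal to compare your argument against. Judged on its own merits, your sketch is fine for positive integer $t$: conditioning on $(\mathcal{F}_n,\zeta_n)$ and using the independence of $\zeta_n$ from $Z_n$ does give the recursion $\bm{E}[Z_{n+1}^t]=m_t^*\bm{E}[Z_n^t]+R_{n,t}$, the lower-order terms decay geometrically after normalization because $m_s^*\le m_t^{*s/t}<m_t^*$ for $s<t$, and conditional Jensen makes the normalized sequence nondecreasing, so a positive finite limit follows.

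The gap is in your extension to general real $t$, which is where the theorem is actually needed in the paper (e.g.\ $t=4+2\delta$). Writing $\bm{E}[Z_n^t]=\bm{E}[\Pi_n^tW_n^t]$ and invoking ``conditional independence of $\Pi_n$ and $W_n$ given the environment'' to conclude cannot work: $\Pi_n$ is itself a function of the environment, and $W_n$ is strongly correlated with the environment, so conditioning only yields $\bm{E}\bigl[\Pi_n^t\,\bm{E}(W_n^t\mid\zeta)\bigr]$, which is precisely the quantity one must control; it does not factor into $\bm{E}[\Pi_n^t]\,\bm{E}[W_n^t]$. This correlation is the heart of the result, and Huang and Liu handle it with an exponential change of measure on the environment (tilting the law of $\zeta_k$ by $m_k^t/m_t^*$), under which $\bm{E}[Z_n^t]/m_t^{*n}$ is the $t$-th moment of $W_n$ under the tilted law; the limit $c_t$ is that tilted moment of $W$, not $\bm{E}[W^t]$ under $P$ as your argument would deliver, and uniform integrability must be verified under the tilted measure. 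Separately, for $t<0$ dominated convergence plus harmonic-moment bounds cannot suffice: as the paper's own Lemma \ref{sum1/zn} (via Grama, Liu and Miqueu, 2017) records, negative moments of $Z_n$ exhibit a phase transition and may decay like $\gamma_1^n$ or $n\gamma_1^n$ rather than $(\bm{E}[m_{0,1}^{t}])^n$, so the claimed rate for negative $t$ holds only under additional conditions that neither your sketch nor the bare statement quoted in the paper spells out.
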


\end{document}